\numberwithin{equation}{section}
\newtheorem{theorem}{Theorem}[section]
\newtheorem{proposition}[theorem]{Proposition}
\newtheorem{definition}[theorem]{Definition}
\newtheorem{remark}[theorem]{Remark}
\begin{document}

	\title{A structure preserving numerical scheme for Fokker-Planck equations of neuron networks: numerical analysis and exploration }
	
	\author{Jingwei Hu\footnote{Department of Mathematics, Purdue University, West Lafayette, IN 47907, USA (jingweihu@purdue.edu).} \  \
	        Jian-Guo Liu\footnote{Department of Mathematics and Department of Physics, Duke University, Box 90320, Durham, NC
   27708, USA (jliu@phy.duke.edu).} \   \
	        Yantong Xie\footnote{School of Mathematics Science, Peking University, Beijing, 100871, China (1901110045@pku.edu.cn).} \   \
	        Zhennan Zhou\footnote{Beijing International Center for Mathematical Research, Peking University, Beijing, 100871, China (zhennan@bicmr.pku.edu.cn).}}
	\maketitle

\begin{abstract}

In this work, we are concerned with the Fokker-Planck equations associated with the Nonlinear Noisy Leaky Integrate-and-Fire model for neuron networks. Due to the jump mechanism at the microscopic level, such Fokker-Planck equations are endowed with an unconventional structure: transporting the boundary flux to a specific interior point. While the equations exhibit diversified solutions from various numerical observations, the properties of solutions are not yet completely understood, and by far  there has been no rigorous numerical analysis work concerning such models. We propose a conservative and {conditionally positivity preserving} scheme for these Fokker-Planck equations, and we show that in the linear case, the semi-discrete scheme satisfies the discrete relative entropy estimate, which essentially matches the only known long time asymptotic solution property.  We also provide extensive numerical tests to verify the scheme properties, and carry out several sets of numerical experiments, including finite-time blowup, convergence to equilibrium and capturing time-period solutions of the variant models.

\end{abstract}

{\small
{\bf Key words:} Fokker-Planck equation, Nonlinear Noisy Leaky Integrate-and-Fire model, neuron network, structure preserving scheme, discrete relative entropy estimate.

{\bf AMS subject classifications:} 35K20, 65M08, 65M12, 92B20.
}


\section{Introduction}


As large-scale neuron networks models in computational neuroscience have received more attention \cite{CCP2011,CS2018,CP2014,CCT2010}, the need of  developing mathematical tools for analyzing the dynamics of large-scale networks and for robust numerical simulations  becomes urgent. Among various mathematical models, investigating the stochastic integrate-and-fire model for the membrane potential across a neuron has long been an active field. Through the mean-field theory \cite{NS2018,DG2017}, one can approximate the specific pattern of the neuron with the average input of the network and then derive an effective stochastic differential equation (SDE) for a single neuron \cite{ CS2018,BH1999,THF2012,DG2017}. In the past few years, more mathematicians are interested in the Fokker-Planck type equation associated with  such  SDEs (see \cite{CCP2011} for a summary, and the references therein).

One of the most well-established stochastic models in this area  is the so-called Nonlinear Noisy Leaky Integrate-and-Fire (NNLIF) model for neuron networks. The model describes the mean-field dynamical behaviour of an ensemble of neurons within a network through a stochastic differential equation for the membrane potential evolution \(V(t)\). When the firing event does not occur, the potential $V(t)$ is influenced by a relaxation to a resting potential \(V_L\) which is assumed to be independent of other neurons
and an incoming synaptic current \(I(t)\) from other neurons and the background. In many literatures, the current \(I(t)\) is approximately decomposed into  a drift term and a term of Brownian motion. The SDE, in the simplest form, is given by
\begin{equation} \label{eq:sde}
dV=-(V-V_L)dt+\mu dt+\sigma dB_t.
\end{equation}
The parameters \(\mu\) and \(\sigma\) are determined by the input current \(I(t)\).

The distinguished feature of this model is the incorporation of the firing event:  when a neuron reaches a firing potential \(V_F\), it discharges itself and the potential jumps to a resetting potential \(V_R\) immediately. Here, we assume, \(V_L<V_R<V_F\), and the firing event is expressed by
\begin{equation}\label{eq:jump}
V(t^-)=V_F, \quad V(t^+)=V_R.
\end{equation}
Equation (\ref{eq:sde}) and (\ref{eq:jump}) constitutes the stochastic process for the NNLIF model, which is a SDE coupled with a renewal process.

There are quite a few mathematical studies of the NNLIF model as in the simplest form (see, e.g. \cite{CCP2011,CGGS2013,DIRT2015,PCSS2015}). In fact, people have proposed a number of modifications to the model in order to match more complicated biological phenomenon. In \cite{CS2018,NT2000}, the network is divided into two population (excitatory and inhibitory) coupled via synapses in which the conductance of a neuron is modulated according to the firing rates of its presynaptic populations. Some studies focus on  the NNLIF model with transmission delays between the neurons and  the neurons remain in a refractory state for a certain time \cite{CS2018,CP2014,CRSS2018}. In \cite{DHT2016}, the authors consider the effect of the time passed since the neuron's last firing, which is interpreted as the age of a neuron. In \cite{CCT2010}, a generalized  Fokker-Planck equation has been studied, where the density function \(p(v,g,t)\) stands for the probability of  finding a neuron with potential \(v\) and conductance \(g\) at time \(t\).

Albeit numerous existing variants, in this article we only consider the NNLIF model as in Equation \eqref{eq:sde} and \eqref{eq:jump}. Heuristically, by Ito's formula, one can derive the time evolution of density function $p(v,t)$, which represents the probability of finding a neuron at voltage $v$ and given time $t$. The resulting PDE is a Fokker-Planck type equation with a flux shift, given in the following
\begin{equation} \label{eq:fk}
\begin{cases}
\partial_t p+\partial_v ( h p) - a \partial_{vv}p
=0, \quad v\in(-\infty,V_F)/\{V_R\},  \\
p(v,0)=p_0(v),\qquad p(-\infty,t)=p(V_F,t)=0, \\
p(V_R^-,t)=p(V_R^+,t),\qquad \partial_vp(V_R^-,t)=\partial_v p(V_R^+,t)+\frac{N(t)}{a},
\end{cases}
\end{equation}
where $p_0(v)$ is the initial condition satisfying
\[\int_{-\infty}^{V_F} p_0(v) dv=1,\]
 $h$ denotes the drift field, $a$ denotes the diffusion coefficients (which, for simplicity, is assumed to be independent of $v$), and $N(t)$ represents the mean firing rate, which takes the following form
 \begin{equation} \label{eq:Nt}
N(t)=-a\partial_v p(V_F,t) \ge 0.
\end{equation}
By direct calculation, it is clear that the choice of the mean firing rate ensures \(\int_{-\infty}^{V_F}p(v,t)dv=1\) at any time $t \ge 0$.


To match with microscopic stochastic model \eqref{eq:sde}, the drift and the diffusion parameters in the Fokker-Planck equation are taken as \(h=-(v-V_L)+\mu\) and \(a=\frac{\sigma^2}{2}\). In quite a few recent literatures \cite{CCP2011,CS2018,CP2014,CGGS2013,CRSS2018,PCSS2015},  those parameters are modeled as function of the mean firing rate $N(t)$ to incorporate the effect of the firing event to the density function at the macroscopic level. In the simplest form, the follow choice has been widely considered
\begin{equation} \label{eq:parameter}
h(v,N(t))=-v+bN(t), \quad a(N(t))=a_0+a_1N(t).
\end{equation}
In particular, $-v$ describes the leaky behavior, \(b\) models the connectivity of the network: \(b>0\) describes excitatory networks and \(b<0\) inhibitory networks. The connectivity of network plays an important role for the properties of Equation (\ref{eq:fk}), such as steady states and blow-up.
In this paper, we aim to explore reliable and efficient numerical approximations of Equation \eqref{eq:fk} and with parameters given by \eqref{eq:parameter}, which in theory may easily extend to variants of other models.


In the past decade, many researches are devoted to investigating the solution properties of Equation \eqref{eq:fk}, though only limited results are have been obtained due to the nonlinearity and the lack of applicable analysis tools.  In \cite{CP2014,CGGS2013}, the authors study the  existence of classical solutions of Equation (\ref{eq:fk}) and its extensions by linking them to the Stefan problem with a moving boundary and a moving source term. To facilitate designing numerical approximations, we instead consider the following weak version of the solutions:
\begin{definition}
We say a pair of nonnegative functions \((p, N)\) with \\ \(p\in L^\infty( L^1_+( -\infty,V_F)\times \mathbb{R}^+\)), \(N\in L^1_{loc,+}(\mathbb{R}^+)\) is a weak solution of Equation (\ref{eq:fk}) if for any test function \(\varphi(v,t)\in C^\infty((-\infty, v_F]\times[0,T])\) such that \\ \(\partial_{vv}\varphi,v\partial_{v}\varphi\in L^\infty((-\infty, V_F)\times(0, T ))\), we have
\begin{align}
&\int_0^T\int_{-\infty}^{V_F}p(v,t)\left( -a\partial_{vv}\varphi-\partial_t\varphi(v,t)
-h(v,N(t))\partial_v\varphi(v,t) \right) dvdt\nonumber\\
=&\int_{-\infty}^{V_F}\left(p(v,0)\varphi(v,0)-p(v,T)\varphi(v,T)\right) dv+\int_0^TN(t)\varphi(V_R,t)dt-\int_0^TN(t)\varphi(V_F,t)dt.
\end{align}
\end{definition}

Note that, the notation of solution above essentially agrees with Definition (2.1) in \cite{CCP2011}. Also note that, the choice of the test function $\varphi \equiv 1$ naturally implies the weak solution conserves the mass of the initial data.

Even in the weak sense, properties of the Fokker-Planck Equation \eqref{eq:fk}   are not thoroughly understood due to the flux shift and the nonlinearity. However, from the known results and numerical experiments (see, e.g. \cite{CCP2011,CS2018,CCT2010,PCSS2015}), people discover that  the model produces  diverse solutions with complicated structures such as multiple steady states, blow-up behaviour and synchronous states.
For inhibitory and excitatory networks when connectivity is small, it is proved that there is a unique steady state. However when connectivity  is big enough, there may be nonexistence or nonuniqueness of the steady states.
Besides, when \(b>0\), finite-time blow-up phenomenon may appear in the weak sense for certain initial conditions.

It is worth noting that, for long time asymptotic behaviour, there is only limited understanding. As far as we know, no direct energy estimate has been derived yet for such systems. For the linear case \(a_1=b=0\) though, the relative entropy estimate can be proved, which implies the exponential convergence towards equilibrium\cite{CCP2011}.


Numerical studies for Equation (\ref{eq:fk}) and other Fokker-Planck type equations arising in neuroscience are widely open due to the absence of conventional analytic properties, though a number of meaningful numerical experiments have been done. In \cite{CCT2010}, the authors propose a numerical scheme combining the WENO-finite differences and the Chang-Cooper method,
the numerical tests are mainly concerned with the blow-up phenomenon and the steady states.
Another type of numerical tests is to simulate synchronization and periodic solutions for a variant model. In \cite{CP2014,CS2018}, the authors study the leaky integrate-and-fire model with transmission delay. It is shown numerically that, transmission delay not only prevents the blow-up phenomenon, but also may produce periodic solutions, although only partial analysis results are obtained.

To the best of our knowledge, no rigorous numerical analysis has been done for computational methods of such Fokker-Planck equations with a flux shift, though numerical approximations of  general Fokker-Planck type equations have been extensively investigated.
In \cite{CCH2015}, the authors propose an explicit positivity-preserving and entropy-decreasing finite volume scheme for nonlinear nonlocal equations with a gradient flow structure. Another track of numerical methods are based on the symmetrization of the Fokker-Planck equation\cite{JY2011,JW2011,ABPP2018,LWZ2018}, which is also referred to as the Scharfetter-Gummel flux approach  \cite{ABPP2018}, and with proper time discretization, the resulting schemes are often semi-implicit or fully implicit. We emphasize that the numerical methods above do not, however, give insights in treating the flux shift.

In fact, the Equation (\ref{eq:fk}) can also be viewed as a balance law equation
\begin{equation} \label{BL}
\partial_tp+\partial_vF=0,
\end{equation}
with flux shift from \(V_F\) to \(V_R\). For $v\in(-\infty,V_F)/\{V_R\}$, the flux function of the equation is given by
\begin{equation}\label{eq:flux}
 F(v,t)=-a \partial_vp+(-v+bN)p.
\end{equation}
The boundary condition at $v=V_F$ and the derivative jump condition at $v=V_R$ can thus be cast as
\begin{equation}\label{fluxshift}
\begin{cases}
F(V_R^+,t)- F(V_R^-,t)=N(t),\\
F(V_F^-,t)=N(t).
\end{cases}
\end{equation}
In other words, the flux flowing out from \(V_F\) is repositioned at the point \(V_R\). Equivalently, we can consider the modifed  flux function
\begin{equation}\label{eq:mflux}
\tilde F(v,t)=-a \partial_vp+(-v+bN)p - H(v-V_R) N,
\end{equation}
which is continuous on $(-\infty,V_F)$. Here \(H(v)\) stands for the Heaviside function. In this paper, we {present} the numerical scheme based on the balance-law form.
%

As we have mentioned, the solution properties of Equation \eqref{eq:fk} are poorly understood, and thus there is no obvious way to design a stable scheme or an energy dissipating scheme since there is no such relevant results in the continuous case.
In fact, it is known that in the linear equation \(a_1=b=0\), the relative entropy given by
\begin{equation} \label{eq:entropy}
I=\int_{-\infty}^{V_F}G \left( \frac{p(v,t)}{p^{\infty}(v)} \right) p_{\infty}(v)dv,
\end{equation}
can be shown to be decreasing in time (see Theorem 4.2 in \cite{CCP2011}), where \(G\) is a convex function and \(p^{\infty}(v)\) stands for the stationary solution. 
In particular, the dissipation in the relative entropy consists of two parts: the bulk dissipation similar to conventional Fokker-Planck equations and the dissipation due to the jump between $V_R$ and $V_F$. (For completeness, we give the full statement of this result in Section 3.1.)

In this paper, we study the central difference approximation of the Fokker-Planck Equation (\ref{eq:fk}), which is based on the Scharfetter-Gummel reformulation. In the semi-discrete scenario, one may define a discrete relative entropy with a similar form to Equation (\ref{eq:entropy}):
\begin{equation}
S(t)=\sum_{i}hG\left(\frac{p_i}{p_i^{\infty}}\right)p_i^{\infty},
\end{equation}
where $i$ is the spatial index and $h$ is the spatial size, and the meaning of other quantities shall be specified later. In this paper, we prove that \(\frac{d}{dt}S\leq0\) for the linear equation when \(a_1=b=0\), and the discrete dissipation also breaks into the bulk part and the boundary part.
With proper time discretization, the fully discrete scheme is only linearly implicit even when the equation is nonlinear, and hence the use of a nonlinear solver is avoided. Furthermore, we prove that the fully discrete scheme is conservative and conditionally positivity-preserving, which makes it ideal for simulations. With extensive numerical tests, we verify the claimed properties of the methods and demonstrate their superiority in various challenging applications. To our knowledge, the numerical method presented in this paper is the first numerical solver for the Fokker-Planck equation with a flux shift, for which rigorous numerical analysis is provided.

The rest of the paper is outlined as follows. In Section 2 we give a detailed description of the  scheme and prove its basic properties. In Section 3, the discrete relative entropy for the linear model is proved to be decreasing in time at the semi-discrete level. In Section 4 we numerically verify the properties of the proposed scheme and present various numerical experiments. Finally, we give some concluding remarks in Section 5.

\section{Scheme description and numerical analysis}

{In this section we introduce the numerical scheme for Equation \eqref{eq:fk} based on the Scharfetter-Gummel reformulation \cite{JY2011,JW2011,ABPP2018,LWZ2018}, a typical  symmetrization form for Fokker-Planck equations. The constructed numerical scheme preserves various structures of Equation \eqref{eq:fk}, such as the conservation of mass and the preservation of positivity. In particular, we can show that the proposed scheme satisfies  the discrete relative entropy  (see Theorem \ref{thm:nre}), matching the only available long time asymptotic property of the solution to Equation \eqref{eq:fk}.}

\subsection{Description of the scheme}

Now we describe the numerical scheme for the Fokker-Planck Equation  (\ref{eq:fk}) of the neuron networks, where both the semi-discrete scheme and the fully discrete scheme are presented with detailed construction.

\begin{figure}
\centering
\includegraphics[width=14cm,height=4cm]{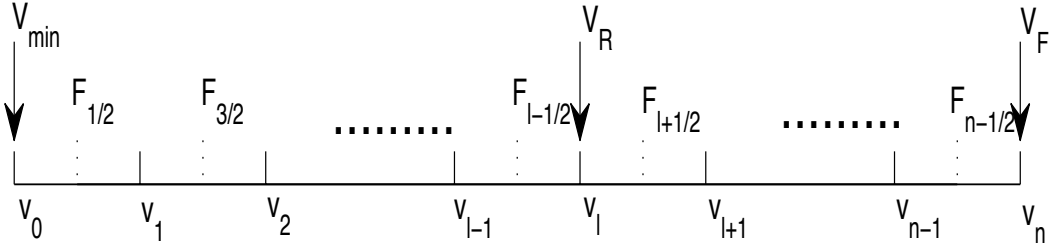}
\caption{Illustration of the scheme stencil.}
\label{figsch}
\end{figure}

As illustrated by Figure \ref{figsch}, we choose a finite interval \([V_{\min},V_F]\) as the computation domain. We suppose $V_{\min}$ is small enough such that the density function is practically negligible near $v=V_{\min}$.  Then we divide the interval into \(n\) equal subintervals with the spatial size \(h=\frac{ V_F-V_{\min}}{n}\). The grid points are chosen as \(v_0=V_{\min},v_1=V_{\min}+h,\cdots,v_n=V_F\). In particular, the reset point \(V_R\) is chosen as a grid point and denoted as \(v_l=V_R\). {Finally, \(p_i(t)\) stands for the semi-discrete solution at $v=v_i$ and \(p_i^m\) stands for the numerical solution at $v=v_i$ and $t=t_m=m\tau$, where $\tau$ is the step size.}

Since Equation \eqref{eq:fk} can be viewed as a generalized balance law, when $v_i \ne v_l$, the approximate grid value of the density function is updated through the form (\ref{BL}):
{\begin{equation}\label{eq:BL}
\frac{\partial p_i}{\partial t}+\frac{F_{i+\frac{1}{2}}-F_{i-\frac{1}{2}}}{h}
=0,
\end{equation}
where the numerical flux \(F_{i+\frac{1}{2}}\) is an approximation to the flux function in \eqref{eq:flux} at the half grid points.} However, due to the derivative  jump of \(p(v,t)\) at \(v=V_R\), we need to incorporate a flux difference of \(N(t)\) from the left hand side and right hand side of \(v_l=V_R\), illustrated in Equation \eqref{fluxshift}. To this end, we introduce a modification to the numerical flux by subtracting a Heaviside function multiplied by the numerical approximation of the mean firing rate, denoted by $N_h$, i.e.
\begin{equation}\label{mod}
\tilde{F}_{i+\frac{1}{2}}=F_{i+\frac{1}{2}}-N_hH\left(v_{i+\frac{1}{2}}-V_R\right).
\end{equation}
Here, we choose to use the first order finite difference approximation for the $N_h$, given by
\begin{equation}\label{eq:Nh}
N_h=-a(N_h)\frac{0-p_{n-1}}{h}.
\end{equation}
We conclude from the expression above that if $p_{n-1}$ is nonnegative, $N_h$ is also nonnegative. We shall see in later sections that this property is consistent with the rest of the numerical scheme such that the overall scheme is positivity preserving conditionally.
On the other hand, naive high order approximation of the mean firing rate may cause \(N_h<0\) for some particular initial data, which results in the instability of the whole scheme. We may explore acceptable high order approximations of the mean firing rate in the future.

With the modified flux, the semi-discrete scheme reads:
\begin{equation}
\label{sch:modflux}
\frac{\partial p_i}{\partial t}+\frac{\tilde{F}_{i+\frac{1}{2}}-\tilde{F}_{i-\frac{1}{2}}}{h}
=0.
\end{equation}
Finally the boundary condition is set as \(p_0=p_n=0\).


Now we introduce the Scharfetter-Gummel reformulation. Let
\[
U(v,t)=:-\frac{\int h(v,N(t))dv}{a(N(t))}=\frac{(v-bN(t))^2}{2a(N(t))},
\]
and thus we combine the diffusion term and the convection term together
\[
\partial_v\left(e^{-U}\partial_v\left(\frac{p}{e^{-U}}\right)\right)=-\frac{\partial_v((-v+bN(t))p(v,t))}{a(N(t))}+\partial_{vv}p(v,t).
\]

For simplicity of notation, we further denote
\begin{equation}
\label{eq:M}
M(v,t)=e^{-U(v,t)}=\exp\left(-\frac{(v-bN(t))^2}{2a(N(t))}\right).
\end{equation}
Then when $v\in(-\infty,V_F)/\{V_R\}$, the Fokker-Planck Equation \eqref{eq:fk} can be written as
\begin{equation} \label{eq:SG1}
\partial_t p(v,t)-a(N(t))\partial_v\left(M(v,t)\partial_v\left(\frac{p(v,t)}{M(v,t)}\right)\right)=0.
\end{equation}

Now we apply the center difference discretization for Equation \eqref{eq:SG1}, the modified numerical flux can be written as (note that the numerical flux of boundary cell will be defined later)
\begin{equation} \label{SGd}
\tilde{F}_{i+\frac{1}{2}}=-a(N_h)M_{i+\frac{1}{2}}\frac{\frac{p_{i+1}}{M_{i+1}}-\frac{p_i}{M_i}}{h}-N_hH\left(v_{i+\frac{1}{2}}-V_R\right),\quad 1\leq i\leq n-2,
\end{equation}
where \(M_i(t)\) and \(M_{i+\frac{1}{2}}(t)\) denote approximations to \(M\) at $v_i$ and $v_{i+\frac{1}{2}}$ as follows:
\begin{equation}\label{eq:Mi}
M_i(t)=\exp\left(-\frac{\left(v_i-bN_h\right)^2}{2a\left(N_h\right)}\right).
\end{equation}



{\begin{remark}
Although the analytical expression of \(M\) and $M_i$ is given in Equation \eqref{eq:M} and \eqref{eq:Mi}, we choose to use \(M_{i+\frac 1 2}^H\), the harmonic mean of $M_i$ and $M_{i+1}$, to approximate $M_{i+\frac 1 2}$, i.e.
\begin{equation} \label{harmonic}
M_{i+ \frac 1 2}^H= \left( \frac 1 2 \Bigl( (M_i)^{-1}+ (M_{i+1})^{-1} \Bigr)   \right)^{-1}.
\end{equation}
We shall show in Section \ref{sec:rel} that this choice is necessary for the discrete relative entropy estimate. However, this choice is not essential for numerical tests in  Section 4.
\end{remark}}

Due to boundary condition \eqref{fluxshift} and in order to make the scheme conservative, numerical fluxes at middle of the boundary cells are defined specially as follows
\[
\begin{cases}
\tilde{F}_{\frac{1}{2}}=F_{\frac{1}{2}}=0,\\
\tilde{F}_{n-\frac{1}{2}}=F_{n-\frac{1}{2}}-N_h=0.
\end{cases}
\]
Notice that, the numerical boundary conditions does not bring in additional difficulty since $N_h$ is treated explicitly in Equation \eqref{eq:Nh}.

In a similar way, we also give the fully discrete scheme according to the balance law form \eqref{BL} as follows:
\begin{equation}\label{eq:BLfull}
\frac{p_i^{m+1}-p_i^m}{\tau}+\frac{\tilde{F}_{i+\frac{1}{2}}-\tilde{F}_{i-\frac{1}{2}}}{h}
=0,
\end{equation}
Then we consider two schemes with different time discretization strategies, both based on the form (\ref{eq:BLfull}). In the first scheme, the numerical flux is fully explicit:
\begin{equation}\label{sch:exp}
\tilde{F}_{i+\frac{1}{2}}^m=-a(N_h^m)\frac{M_{i+\frac{1}{2}}^m}{h}\left(\frac{p_{i+1}^m}{M_{i+1}^m}-\frac{p_i^m}{M_i^m}\right)-N_h^mH\left(v_{i+\frac{1}{2}}-V_R\right),\quad 1\leq i\leq n-2.
\end{equation}
The explicit method obviously suffers from the parabolic type stability constraint, which prevents efficient numerical simulations. The introduction of this method is mainly for the numerical comparison.

Next, we consider the following semi-implicit scheme, where the numerical flux is given by
\begin{eqnarray}\label{sch:semi}
\tilde{F}_{i+\frac{1}{2}}^m=-a(N_h^m)\frac{M_{i+\frac{1}{2}}^{m}}{h}\left(\frac{p_{i+1}^{m+1}}{M_{i+1}^{m}}-\frac{p_i^{m+1}}{M_i^{m}}\right)-N_h^mH\left(v_{i+\frac{1}{2}}-V_R\right),\quad 1\leq i\leq n-2.
\end{eqnarray}
Note that, in the numerical flux \eqref{sch:semi} \(N_h\) and \(M\) are treated explicitly (recall that \(M^m\) is defined through \(N_h^m\) due to Equation \eqref{eq:M}) but the dependence on \(p_i^{m+1}\) is implicit, and thus we avoid solving a nonlinear equation.

\subsection{Numerical analysis}

{In this part, we analyze the proposed schemes, and show some properties with respect to the preservation of the solution structure of the problem. We justify the positivity preserving property for the semi-implicit scheme \eqref{sch:semi}, and show that the formal energy estimation is valid for semi-discrete scheme \eqref{sch:modflux} given the boundedness of the mean firing rate. We also comment on the difficulties of the numerical study of such models, and motivate the significance of the discrete relative entropy estimate to be established in the next section.}

\subsubsection{Positivity preserving}

{The semi-implicit scheme with the numerical flux \eqref{sch:semi} preserves the positivity for all \(p_i^m\)  as stated in the following theorem. }

{\begin{theorem}
Consider the Fokker-Planck Equation \eqref{eq:fk} with the initial condition \(p_0(v)>0\), the semi-implicit scheme \eqref{eq:BLfull} \eqref{sch:semi} preserves positivity, i.e. \(p_i^m>0\), \(\forall i,m\), provided that the following parabolic type constraint is satisfied:
\begin{equation}\label{eq:condpost}
\frac{\tau}{h^2}<\frac{1}{a\left(N_h^m\right)}.
\end{equation}
\end{theorem}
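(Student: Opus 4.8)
The plan is to argue by induction on the time level $m$, establishing the statement for the interior unknowns $p_1^m,\dots,p_{n-1}^m$ (the endpoints $p_0^m=p_n^m=0$ being fixed by the boundary condition). The base case holds since $p_i^0=p_0(v_i)>0$. For the inductive step I would assume $p_i^m>0$ for all interior $i$ and deduce $p_i^{m+1}>0$.

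First I would rewrite the update \eqref{eq:BLfull}--\eqref{sch:semi} as a linear system $A\,\mathbf p^{m+1}=\mathbf b^m$ for the unknown vector $\mathbf p^{m+1}=(p_1^{m+1},\dots,p_{n-1}^{m+1})$. Because the Heaviside terms and $N_h^m$ are treated explicitly, they do not enter the matrix $A$; collecting the implicit bulk fluxes gives a tridiagonal $A$ with
\[
A_{ii}=1+\frac{\tau a(N_h^m)}{h^2}\frac{M_{i+\frac12}^m+M_{i-\frac12}^m}{M_i^m},\qquad A_{i,i\pm1}=-\frac{\tau a(N_h^m)}{h^2}\frac{M_{i\pm\frac12}^m}{M_{i\pm1}^m},
\]
so that $A$ has positive diagonal, nonpositive off-diagonal entries, and is irreducible (all off-diagonals being strictly negative). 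The right-hand side $\mathbf b^m$ carries the explicit data: for a generic interior index $b_i^m=p_i^m$; at the reset point $v_l=V_R$ the flux shift adds the nonnegative source $\frac{\tau}{h}N_h^m$, giving $b_l^m=p_l^m+\frac{\tau}{h}N_h^m$; and at the outflow cell $i=n-1$, where $\tilde F_{n-\frac12}=0$ and the loss $N_h^m=\frac{a(N_h^m)}{h}p_{n-1}^m$ is subtracted explicitly, one finds $b_{n-1}^m=\bigl(1-\frac{\tau a(N_h^m)}{h^2}\bigr)p_{n-1}^m$.

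The key step is to show $A$ is a nonsingular M-matrix, so that $A^{-1}$ is entrywise positive. Rather than checking ordinary diagonal dominance (which fails here because the scheme annihilates the discrete Maxwellian rather than the constant vector), I would exhibit the positive vector $\mathbf M=(M_1^m,\dots,M_{n-1}^m)$ and verify the identity $A\mathbf M=\mathbf M$. Indeed, setting $p_i=M_i^m$ makes every discrete ratio $p_i/M_i^m$ equal to one, so all bulk numerical fluxes vanish identically (this holds for any choice of $M_{i+\frac12}^m$, hence does not rely on the harmonic mean), and a direct computation confirms the same identity at the two boundary rows $i=1$ and $i=n-1$. Since $A$ has the required sign pattern and admits a strictly positive vector $\mathbf M>0$ with $A\mathbf M=\mathbf M>0$, it is a nonsingular M-matrix, and irreducibility upgrades $A^{-1}\ge 0$ to $A^{-1}>0$.

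Finally I would combine these facts: under the hypothesis \eqref{eq:condpost} one has $\frac{\tau a(N_h^m)}{h^2}<1$, so $b_{n-1}^m>0$; together with $b_i^m=p_i^m>0$ and $b_l^m>0$ this makes $\mathbf b^m$ a nonnegative, nonzero vector, whence $\mathbf p^{m+1}=A^{-1}\mathbf b^m>0$, closing the induction. I expect the main obstacle to be precisely the M-matrix step: the naive diagonal-dominance estimate does not close because of the non-constant weights $M_i^m$, and the essential observation is that the Scharfetter--Gummel flux is consistent with the discrete equilibrium $\mathbf M$, which furnishes the needed positive test vector. The role of the parabolic constraint is then pleasantly isolated---it is required only at the single cell $i=n-1$ to prevent the explicitly-treated firing loss from turning $b_{n-1}^m$ negative.
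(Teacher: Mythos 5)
Your proof is correct, and it takes a genuinely different formal route from the paper's. The paper argues by contradiction with a discrete minimum principle: it writes the scheme pointwise as in \eqref{positive}--\eqref{positiven-1}, supposes the ratio $p_i^{m+1}/M_i^m$ attains a nonpositive minimum at some index $j$, and observes that the left-hand side of the corresponding equation is then nonpositive while the right-hand side is positive, the constraint \eqref{eq:condpost} entering exactly once, to make the right-hand side $p_{n-1}^m-\lambda N_h^m=p_{n-1}^m\bigl(1-\tau a(N_h^m)/h^2\bigr)$ positive at $i=n-1$. Your M-matrix argument is the linear-algebraic dual of this: the paper's weighted minimum principle in the variables $p_i^{m+1}/M_i^m$ is precisely the inverse-positivity of your matrix $A$, and your certificate $A\mathbf{M}=\mathbf{M}$ with $\mathbf{M}>0$ encodes the same key structural fact (the Scharfetter--Gummel flux vanishes on the discrete Maxwellian) that makes the paper's contradiction close; your identity does hold at the boundary rows, where only one half-flux appears, and your bookkeeping of the explicit sources ($b_l^m=p_l^m+\frac{\tau}{h}N_h^m$ and $b_{n-1}^m=(1-\tau a(N_h^m)/h^2)\,p_{n-1}^m$ via \eqref{eq:Nh}) coincides with the paper's, so both proofs localize the parabolic constraint to the single cell $i=n-1$. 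Two small comparative points. First, your parenthetical that diagonal dominance ``fails'' is only half-right: \emph{row} dominance can indeed fail when $M_{i\pm1}^m/M_i^m$ is small, but the \emph{column} sums of $A$ are exactly $1$ (the implicit fluxes telescope --- this is discrete mass conservation), i.e.\ $A^T\mathbf{1}=\mathbf{1}$, and since the nonsingular M-matrix property is invariant under transposition, the constant vector already furnishes an even shorter certificate than $\mathbf{M}$. Second, your route buys slightly more than the paper's as written: irreducibility gives $A^{-1}>0$ entrywise, so strict positivity at level $m+1$ follows from merely nonnegative, not-identically-zero data at level $m$, whereas the contradiction argument uses $p_i^m>0$ at every node; conversely, the paper's argument is more elementary and avoids invoking the Z-matrix/semipositivity characterization.
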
}

\begin{proof}
{Let \(\lambda=\frac{\tau}{h}\), the discrete semi-implicit scheme (\ref{eq:BLfull}) with flux \eqref{sch:semi} can be written as follows: for \(1<i<n-1\),
\begin{eqnarray}
\label{positive}
p_i^{m+1}+\lambda\frac{M_{i-\frac{1}{2}}^m}{h}\left(\frac{p_i^{m+1}}{M_i^m}-\frac{p_{i-1}^m}{M_{i-1}^m}\right)
+\lambda\frac{M_{i+\frac{1}{2}}^m}{h}\left(\frac{p_i^{m+1}}{M_i^m}
-\frac{p_{i+1}^{m+1}}{M_{i+1}^m}\right)\nonumber\\
=\lambda\left(H\left(v_{i+\frac{1}{2}}-V_R\right)-H\left(v_{i-\frac{1}{2}}-V_R\right)\right) N^m_h+p_i^m.
\end{eqnarray}
For \(i=1\),
\begin{eqnarray}
\label{positive1}
p_i^{m+1}
+\lambda\frac{M_{i+\frac{1}{2}}^m}{h}\left(\frac{p_i^{m+1}}{M_i^m}
-\frac{p_{i+1}^{m+1}}{M_{i+1}^m}\right)=p_i^m.
\end{eqnarray}
For \(i=n-1\)
\begin{eqnarray}
\label{positiven-1}
p_i^{m+1}+\lambda\frac{M_{i-\frac{1}{2}}^m}{h}\left(\frac{p_i^{m+1}}{M_i^m}-\frac{p_{i-1}^m}{M_{i-1}^m}\right)
=p_i^m-\lambda N_h^m.
\end{eqnarray}}

{The positivity preservation property can be proved by contradiction. If \(p_i^m>0\) for any \(0<i<n\), but \(p_i^{m+1}\leq0\) for some \(0<i<n\). Then we assume that $\frac{p_i^{m+1}}{M_i^m}$ takes the nonpositive minimum at $i=j$. Take \(i=j\) in Equation \eqref{positive}, \eqref{positive1} and \eqref{positiven-1}, we find that the left side of the equations is nonpositive while the right side is positive. (Recall that \(N_h^m\) is given by Equation \eqref{eq:Nh}, then right side of Equation \eqref{positiven-1} is positive due to Equation \eqref{eq:condpost}.) Thus we conclude that the semi-implicit scheme preserves positivity.}
\end{proof}

{\begin{remark}
We remark that it has been shown that the semi-implicit numerical methods based on the Scharfetter-Gummel flux for other related models are unconditionally positivity preserving, see e.g. \cite{JW2011,LWZ2018}. In this light, the proof above indicates the numerical treatment of the flux shift doesn't break the structure provided that Equation \eqref{eq:condpost} is valid, but this constraint manifests the additional challenge in the numerical approximation of such Fokker-Planck equations. However, we emphasize that Equation \eqref{eq:condpost} doesn't seem to be a sharp constraint for the positivity preserving property of the numerical tests as shown in Section 4. In fact, as shown in Section 4.1, when the parabolic type constraint is violated, the full explicit scheme becomes unstable, while the semi-implicit scheme still produces reliable numerical results with the correct convergence order. We also note that The condition \eqref{eq:condpost} can be removed by treating the mean firing rate implicitly, which, unfortunately, results in the need of using nonlinear solvers.
\end{remark}}

\subsubsection{Formal discrete energy estimation}

We would like to draw readers' attention to the fact that there is no energy estimate to the Fokker-Planck Equation \eqref{eq:fk} due to the flux shift. Whereas, it is shown in \cite{CGGS2013} that, when the mean firing rate $N(t)$ is bounded, the classical solution to the Fokker-Planck equation exists. Motivated by this result, we carry out the following formal energy estimate for the semi-discrete scheme in the linear case.

Define the semi-discrete energy as
\begin{equation}\label{energy}
E(t)=\sum_{i=1}^{n-1}p_i\ln\left(\frac{p_i}{M_i}\right)h-C\left(G_l(t)-G_{n-1}(t)\right),
\end{equation}
where
\[
G_i(t)=\int_0^t \ln\left(\frac{p_i\left(s\right)}{M_i}\right)ds.
\]

To show the following estimate, we  further make the following technical assumption: in the semi-discrete scheme \eqref{sch:modflux}, we have
\begin{equation} \label{eq:assum1}
\frac{p_l}{M_l} \ge \frac{p_{n-1}}{M_{n-1}}.
\end{equation}
Recall that $p_l$ is the discrete density at $v=V_R$ and $p_{n-1}$ is the discrete density next to $v=V_F$. From this perspective, this assumption is reasonable due to the boundary condition at $v=V_F$ and the flux shift condition from $V_F$ to $V_R$.

\begin{theorem}
Consider Fokker-Planck Equation \eqref{eq:fk} with \(a \equiv1\) and \(b=0\). { Assume that for a given time $T>0$, there exists a constant $C$ such that for $i=1,\cdots,n-1$,
\begin{equation} \label{eq:assum2}
p_i(t)>0,\quad N_h(t) \le C, \quad \forall t \in [0,T],
\end{equation}
and the technical assumption \eqref{eq:assum1} is valid.} If we apply the semi-discrete scheme \eqref{sch:modflux}, then the semi-discrete energy \eqref{energy} is nonincreasing for $t\in[0,T]$. Moreover, for $\forall t \in [0,T]$, we have
\begin{eqnarray}
\frac{d}{dt}E(t)&=&-\sum_{i=1}^{n-2}\left(\frac{M_{i+\frac{1}{2}}}{h}\left(\frac{p_{i+1}}{M_{i+1}}-
\frac{p_i}{M_i}\right)\left(\ln\left(\frac{p_{i+1}}{M_{i+1}}\right)-\ln\left(\frac{p_i}{M_i}\right)\right)\right)\nonumber\\&&
+(N_h-C)\left(\ln\left(\frac{p_l}{M_l}\right)-\ln\left(\frac{p_{n-1}}{M_{n-1}}\right)\right) \le 0.
\end{eqnarray}
\end{theorem}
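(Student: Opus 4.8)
The plan is to differentiate $E(t)$ directly, substitute the semi-discrete scheme for $\dot p_i$, and then reorganize via discrete summation by parts so that the boundary fluxes and the flux-shift contribution separate cleanly into a bulk dissipation term and a single boundary term. First I would observe that with $a\equiv 1$ and $b=0$ the weights $M_i=\exp(-v_i^2/2)$ are independent of time, so differentiating the first sum in \eqref{energy} by the product and chain rule gives $h\sum_{i=1}^{n-1}\dot p_i(\ln(p_i/M_i)+1)$, while the $G$-terms contribute $-C(\ln(p_l/M_l)-\ln(p_{n-1}/M_{n-1}))$ by the fundamental theorem of calculus. Writing $\psi_i := \ln(p_i/M_i)+1$ and inserting $\dot p_i = -(\tilde F_{i+\frac12}-\tilde F_{i-\frac12})/h$ from \eqref{sch:modflux}, the first piece becomes $-\sum_{i=1}^{n-1}(\tilde F_{i+\frac12}-\tilde F_{i-\frac12})\psi_i$.

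Next I would perform Abel summation (discrete integration by parts). The boundary contributions carry the factors $\tilde F_{\frac12}$ and $\tilde F_{n-\frac12}$, both of which vanish by the prescribed numerical boundary fluxes, so the expression reduces to $\sum_{i=1}^{n-2}\tilde F_{i+\frac12}(\psi_{i+1}-\psi_i)$, in which the additive constant $1$ in $\psi_i$ drops out of the difference. I then split the flux \eqref{SGd} into its Scharfetter--Gummel part and its Heaviside part. The Scharfetter--Gummel part produces exactly the claimed bulk sum $-\sum_i \frac{M_{i+\frac12}}{h}(p_{i+1}/M_{i+1}-p_i/M_i)(\ln(p_{i+1}/M_{i+1})-\ln(p_i/M_i))$. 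For the Heaviside part I use that $V_R=v_l$ is a grid point, so $H(v_{i+\frac12}-V_R)=1$ precisely for $i\ge l$ and $0$ otherwise; the surviving sum $\sum_{i=l}^{n-2}(\ln(p_{i+1}/M_{i+1})-\ln(p_i/M_i))$ telescopes to $\ln(p_{n-1}/M_{n-1})-\ln(p_l/M_l)$, producing the term $N_h(\ln(p_l/M_l)-\ln(p_{n-1}/M_{n-1}))$. Combining this with the $-C(\cdots)$ contribution from the $G$-terms yields the stated identity for $\frac{d}{dt}E(t)$.

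It then remains to read off the sign. Each bulk summand is nonnegative since $M_{i+\frac12}>0$ and, by monotonicity of $\ln$, the factors $(p_{i+1}/M_{i+1}-p_i/M_i)$ and $(\ln(p_{i+1}/M_{i+1})-\ln(p_i/M_i))$ share a sign; hence the bulk term is $\le 0$. For the boundary term, assumption \eqref{eq:assum1} gives $\ln(p_l/M_l)-\ln(p_{n-1}/M_{n-1})\ge 0$ while \eqref{eq:assum2} gives $N_h-C\le 0$, so their product is $\le 0$ and $E$ is nonincreasing. The step requiring the most care is the bookkeeping of the summation by parts together with the exact location of the Heaviside switch at $i=l$; and the conceptual crux is that the seemingly ad hoc correction $-C(G_l-G_{n-1})$ in the definition of $E$ is engineered precisely so that the telescoped flux-shift contribution $N_h(\cdots)$ pairs with $-C(\cdots)$ into one sign-definite boundary term controllable by the two technical assumptions. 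Once the boundary fluxes are known to vanish and the Heaviside cutoff is located, the remaining manipulations are routine.
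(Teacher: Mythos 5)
Your proposal is correct and follows essentially the same route as the paper's proof: differentiate $E(t)$, insert the semi-discrete scheme, sum by parts using the vanishing boundary fluxes, split the flux into its Scharfetter--Gummel and Heaviside parts, and conclude via monotonicity of $\ln$ together with assumptions \eqref{eq:assum1} and \eqref{eq:assum2}. The only (immaterial) difference is bookkeeping: you telescope the Heaviside contribution $\sum_{i=l}^{n-2}(\psi_{i+1}-\psi_i)$ after summing by parts on $\tilde F$, whereas the paper first extracts the flux-shift jump localized at $i=l$ and then sums by parts on the unmodified flux $F$ with $F_{n-\frac12}=N_h$ --- the two computations are algebraically identical.
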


\begin{proof}

By direct calculation, we have
\begin{eqnarray}\label{prf:energyest}
\frac{d}{dt}E(t)
&=&\sum_{i=1}^{n-1}\left(\frac{dp_i}{dt}\left(\ln\left(\frac{p_i}{M_i}\right)+1\right)h\right)
-C\left(\ln\left(\frac{p_l}{M_l}\right)-\ln\left(\frac{p_{n-1}}{M_{n-1}}\right)\right)\nonumber\\
&=&\sum_{i=1}^{n-1}\left(\left(\tilde{F}_{i-\frac{1}{2}}-\tilde{F}_{i+\frac{1}{2}}\right)\left(\ln\left(\frac{p_i}{M_i}\right)+1\right)\right)
-C\left(\ln\left(\frac{p_l}{M_l}\right)-\ln\left(\frac{p_{n-1}}{M_{n-1}}\right)\right).\nonumber\\
\end{eqnarray}

Then we  substitute Equation \eqref{SGd} into Equation \eqref{prf:energyest}:
\begin{eqnarray}
\frac{d}{dt}E(t)
&=&\sum_{i=1}^{n-1}\left(\left(F_{i-\frac{1}{2}}-F_{i+\frac{1}{2}}\right)\left(\ln\left(\frac{p_i}{M_i}\right)+1\right)\right)+N_h\left(\ln\left(\frac{p_l}{M_l}\right)+1\right)\nonumber\\&&
-C\left(\ln\left(\frac{p_l}{M_l}\right)-\ln\left(\frac{p_{n-1}}{M_{n-1}}\right)\right)\nonumber\\
&=&\sum_{i=1}^{n-2}\left(F_{i+\frac{1}{2}}\left(\ln\left(\frac{p_{i+1}}{M_{i+1}}\right)-\ln\left(\frac{p_i}{M_i}\right)\right)\right)
+N_h\left(\ln\left(\frac{p_l}{M_l}\right)-\ln\left(\frac{p_{n-1}}{M_{n-1}}\right)\right)\nonumber\\&&
-C\left(\ln\left(\frac{p_l}{M_l}\right)-\ln\left(\frac{p_{n-1}}{M_{n-1}}\right)\right)\nonumber\\
&=&-\sum_{i=1}^{n-2}\left(\frac{M_{i+\frac{1}{2}}}{h}\left(\frac{p_{i+1}}{M_{i+1}}-
\frac{p_i}{M_i}\right)\left(\ln\left(\frac{p_{i+1}}{M_{i+1}}\right)-\ln\left(\frac{p_i}{M_i}\right)\right)\right)\nonumber\\&&
+(N_h-C)\left(\ln\left(\frac{p_l}{M_l}\right)-\ln\left(\frac{p_{n-1}}{M_{n-1}}\right)\right).
\end{eqnarray}

Due to the monotonicity of the $\ln(x) $ function, we have
\[
-\sum_{i=1}^{n-2}\left(\frac{M_{i+\frac{1}{2}}}{h}\left(\frac{p_{i+1}}{M_{i+1}}-
\frac{p_i}{M_i}\right)\left(\ln\left(\frac{p_{i+1}}{M_{i+1}}\right)-\ln\left(\frac{p_i}{M_i}\right)\right)\right) \le 0,
\]
which corresponds to the bulk energy dissipation.

And due to the assumptions \eqref{eq:assum1} and \eqref{eq:assum2}, we also conclude
\[
(N_h-C)\left(\ln\left(\frac{p_l}{M_l}\right)-\ln\left(\frac{p_{n-1}}{M_{n-1}}\right)\right) \le 0,
\]
which can be interpreted as the energy dissipation due to the flux shift.

%
\end{proof}

We remark that  there is no obvious way to know a priori whether the assumptions \eqref{eq:assum1} and \eqref{eq:assum2} are satisfied. And we observe in the proof above that even in the linear case, it is not clear whether and how the flux shift condition introduces  dissipation to the system. These issues make the value of the formal discrete energy estimate rather limited.
In fact, in the nonlinear case, the solution to the Fokker-Planck equation may blowup when clearly either \eqref{eq:assum1} or \eqref{eq:assum2} is violated in finite time.

In the next section, we revisit the long time behavior of the numerical scheme from the perspective of the relative entropy, which proves to be a more suitable metric.
%

\section{Discrete relative entropy estimate} \label{sec:rel}

Since the solution properties of Equation \eqref{eq:fk} are only partially understood, there is no obvious way to design a scheme with valid long time asymptotic behavior. In this section, we consider the stability of the scheme we have proposed in the view of relative entropy. We briefly review the steady states and relative entropy results first (see \cite{CCP2011} for a full discussion), then prove that our numerical scheme is associated with a discrete relative entropy, which is nonincreasing in time as well.

\subsection{Steady states and relative entropy for the continuous problem}

First we give the definition of stationary solution of Equation \eqref{eq:fk}. We denote by \(p^{\infty}(v)\)  the density function of the stationary state, which satisfies
\begin{equation}\label{eq:ss}
\begin{cases}
\partial_v (h(v,N^\infty)p^\infty) - a(N^\infty)\partial_{vv}p^\infty
=0, \quad v\in(-\infty,V_F)/\{V_R\},  \\
p^\infty(V_R^-)=p^\infty(V_R^+),\qquad \frac{\partial}{\partial v}p^\infty(V_R^-)=\frac{\partial}{\partial v} p^\infty(V_R^+)+\frac{N^\infty}{a(N^\infty)}.
\end{cases}
\end{equation}
Here \(N^\infty\) indicates the firing rate for stationary solution:
\begin{equation}\label{NinftyC}
N^{\infty}=-a(N^{\infty})\partial_v p^{\infty}(V_F).
\end{equation}

Given the firing rate \(N^{\infty}\), the expression of \(p^{\infty}(v)\) is given by
\begin{equation}\label{stationary}
p^{\infty}(v)=\frac{N^{\infty}}{a(N^{\infty})}e^{-\frac{h\left(v,N^{\infty}\right)^2}{2a\left(N^{\infty}\right)}}
\int_{\max\{v,V_r\}}^{V_f}e^{\frac{h\left(\omega,N^{\infty}\right)^2}{2a\left(N^{\infty}\right)}}d\omega.
\end{equation}
In \cite{CCP2011,PCSS2015}, it is shown that for inhibitory networks (\(b\leq0\)) and excitatory networks when the connectivity \(b\) is small, there is a unique steady state (the linear case when \(b=0\) is an example with unique stationary solution). However, when the connectivity \(b\) is big enough, nonexistence or nonuniqueness may happen, which depends on the initial condition and the parameters in the equation.

Then we define the relative entropy function for Equation \eqref{eq:fk}:
\begin{equation}\label{re}
I=\int_{-\infty}^{V_F}G\left(\frac{p(v,t)}{p^{\infty}(v)}\right)p_{\infty}(v)dv.
\end{equation}
The main theorem for relative entropy is first introduced in Theorem 4.2 of \cite{CCP2011}:
\begin{theorem}
\label{thm:re}
Consider Fokker-Planck Equation \eqref{eq:fk} with \(a(N(t))\equiv1\) and \(b=0\). Assume that \(p^{\infty}(v)\) is given by Equation \eqref{eq:ss} and the relative entropy is given by \eqref{re}, where \(G(x)\) is a convex function. Then we have
\begin{eqnarray}\label{eq:red}
\frac{d}{dt}I(t)&=&-\int_{-\infty}^{V_F}p^{\infty}G''\left(\frac{p}{p^{\infty}}\right)\left(\partial_v\left(\frac{p}{p^{\infty}}\right)\right)^2dv
\nonumber\\&&-N^{\infty}\left(G\left(\frac{N}{N^{\infty}}\right)-G\left(\frac{p}{p^{\infty}}\right)-\left(\frac{N}{N^{\infty}}-\frac{p}{p^{\infty}}\right)G'\left(\frac{p}{p^{\infty}}\right)\right)\Bigg|_{V_R}\leq 0.\nonumber\\
\end{eqnarray}
\end{theorem}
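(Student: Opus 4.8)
The plan is to carry out the General Relative Entropy computation, adapted to the flux shift. Since $a\equiv 1$ and $b=0$, the drift is $h=-v$ and the flux from \eqref{eq:flux} is $F=-\partial_v p - vp$. First I would introduce the ratio $u:=p/p^{\infty}$ and record the factorized identity $F = u\,F^{\infty} - p^{\infty}\,\partial_v u$, where $F^{\infty}=-\partial_v p^{\infty} - v p^{\infty}$ is the stationary flux; this follows by substituting $p=u\,p^{\infty}$ and differentiating. The crucial observation is that $F^{\infty}$ is piecewise constant: integrating \eqref{eq:ss}, using the decay $F^{\infty}(-\infty)=0$ and the stationary jump condition, gives $F^{\infty}\equiv 0$ on $(-\infty,V_R)$ and $F^{\infty}\equiv N^{\infty}$ on $(V_R,V_F)$. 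This is precisely what concentrates the effect of the flux shift at the single interior point $V_R$.

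Next I would differentiate \eqref{re} as $\frac{d}{dt}I=\int G'(u)\,\partial_t p\,dv$ and use $\partial_t p=-\partial_v F$. Because both $F$ and $\partial_v u$ jump at $V_R$, I would split the integral as $\int_{-\infty}^{V_R}+\int_{V_R}^{V_F}$ and integrate by parts on each piece separately, keeping all one-sided boundary contributions. The diffusive part $-p^{\infty}\partial_v u$ of $F$ yields, after one integration by parts, the bulk term $-\int_{-\infty}^{V_F} p^{\infty}G''(u)(\partial_v u)^2\,dv$, which is the first term on the right-hand side and is manifestly $\le 0$ since $G''\ge 0$ and $p^{\infty}>0$. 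The convective part $u\,F^{\infty}$, active only on $(V_R,V_F)$, contributes $N^{\infty}\int_{V_R}^{V_F} u\,G''(u)\,\partial_v u\,dv$, which I would evaluate exactly using $\frac{d}{dv}\bigl(uG'(u)-G(u)\bigr)=u\,G''(u)\,\partial_v u$, converting it into boundary evaluations at $V_R^{+}$ and $V_F^{-}$.

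The heart of the argument is then collecting all boundary terms at $-\infty$, $V_R^{\pm}$, and $V_F^{-}$. I expect the contribution at $-\infty$ to vanish under the usual decay hypotheses on $p$ and $p^{\infty}$. At $V_R$ the continuity of $u$ together with the flux jump $F(V_R^{+})-F(V_R^{-})=N$ from \eqref{fluxshift} combines the two one-sided integration-by-parts terms into $N\bigl(G'(u(V_R))-G'(u(V_F))\bigr)$, using also $F(V_F^{-})=N$. At $V_F$ I would invoke the key identity $u(V_F^{-})=N/N^{\infty}$, which follows by L'Hôpital's rule since $p(V_F)=p^{\infty}(V_F)=0$ while $N=-\partial_v p(V_F)$ and $N^{\infty}=-\partial_v p^{\infty}(V_F)$ by \eqref{eq:Nt} and \eqref{NinftyC}. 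Substituting $N=N^{\infty}u(V_F)$ and carrying out the algebra, all boundary contributions collapse into $-N^{\infty}\bigl(G(u(V_F))-G(u(V_R))-(u(V_F)-u(V_R))G'(u(V_R))\bigr)$, which upon writing $u(V_F)=N/N^{\infty}$ and $u(V_R)=(p/p^{\infty})|_{V_R}$ is exactly the claimed boundary term. Non-positivity of this term follows because it is a Bregman-type difference $G(x)-G(y)-(x-y)G'(y)\ge 0$ for convex $G$, scaled by $N^{\infty}\ge 0$.

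The step I expect to be the main obstacle is the careful bookkeeping of the one-sided limits at the firing–reset point $V_R$ and at $V_F$: the flux shift renders both $F^{\infty}$ and $\partial_v u$ discontinuous at $V_R$, so the integration by parts must be done piecewise and no boundary term may be dropped. Establishing $u(V_F)=N/N^{\infty}$ rigorously as a $0/0$ limit, and verifying that the algebraic cancellation assembles precisely into the Bregman divergence, is the delicate part. This flux-shift contribution is exactly what distinguishes the present estimate from the classical Fokker-Planck relative entropy and is the origin of the second dissipation term.
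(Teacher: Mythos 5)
Your proposal is correct and follows essentially the same route as the paper, which defers to Theorem 4.2 of \cite{CCP2011} and highlights precisely the two ingredients you rederive: your factorization $F = u\,F^{\infty} - p^{\infty}\partial_v u$ is Proposition \ref{Lemma1} in disguise, and the piecewise-constant stationary flux $F^{\infty} = N^{\infty}H(v-V_R)$ is Proposition \ref{Lemma2}. The piecewise integration by parts at $V_R$, the L'H\^{o}pital identity $u(V_F^-) = N/N^{\infty}$, and the assembly into the Bregman-type boundary term all match the reference's argument, so there is nothing to flag.
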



The right hand side of Equation \eqref{eq:red} can be divided into two parts. The first part
\[
-\int_{-\infty}^{V_F}p^{\infty} G''\left(\frac{p}{p^{\infty}}\right)\left(\partial_v\left(\frac{p}{p^{\infty}}\right)\right)^2dv
\]
is nonpositive due to the convexity of \(G\). This is a familiar bulk dissipation term.

The remaining part
\[
-N^{\infty}\left(G\left(\frac{N}{N^{\infty}}\right)-G\left(\frac{p}{p^{\infty}}\right)-\left(\frac{N}{N^{\infty}}-\frac{p}{p^{\infty}}\right)
G'\left(\frac{p}{p^{\infty}}\right)\right)\Bigg|_{V_R}
\]
consists of several boundary terms caused by flux jump. Specifically, sum of boundary terms are nonpositive when putting together due to the convexity of $G$. 

We highlight that in the proof of Theorem \ref{thm:re} (Theorem 4.2 of \cite{CCP2011}), the authors put forward the following propositions that play an important role in the proof. We list them here to be compared with their discrete counterparts.
\begin{proposition}
\label{Lemma1}
\begin{equation}
p^{\infty}\partial_v\left(\frac{p(v,t)}{p^{\infty}(v)}\right)=\partial_vp-\frac{p(v,t)}{p^{\infty}(v)}\partial_vp^{\infty}.
\end{equation}
\end{proposition}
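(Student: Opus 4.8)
The plan is to treat this as the elementary quotient rule, recast in the form that is convenient for the relative-entropy computation that follows. First I would record the standing assumption that makes the statement meaningful: since $p^{\infty}(v)$ is given explicitly by Equation \eqref{stationary}, it is strictly positive on the open interval $(-\infty,V_F)$, so the ratio $\frac{p}{p^{\infty}}$ and its $v$-derivative are well defined there. Accordingly the identity is to be understood pointwise on $(-\infty,V_F)$, with the vanishing of $p^{\infty}$ at the boundary $v=V_F$ deferred to the separate treatment of the boundary terms in the entropy estimate.

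The computation itself is a one-line application of the quotient rule. Expanding the derivative of the ratio gives
\[
\partial_v\left(\frac{p(v,t)}{p^{\infty}(v)}\right)=\frac{p^{\infty}\,\partial_v p-p\,\partial_v p^{\infty}}{\left(p^{\infty}\right)^2}.
\]
Multiplying both sides by $p^{\infty}$ and cancelling a single factor of $p^{\infty}$ from each term on the right-hand side yields
\[
p^{\infty}\,\partial_v\left(\frac{p}{p^{\infty}}\right)=\partial_v p-\frac{p}{p^{\infty}}\,\partial_v p^{\infty},
\]
which is precisely the claimed identity.

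There is no genuine obstacle here: the statement is a bookkeeping identity whose sole purpose is to re-express the physical flux contribution $\partial_v p$ in terms of the relative density $\frac{p}{p^{\infty}}$, the natural variable for the entropy functional $I$ in \eqref{re}. The only point requiring verification is the nonvanishing of $p^{\infty}$ used implicitly in forming the ratio, which I would confirm directly from the explicit expression \eqref{stationary}. I expect the analogous discrete version to be the place where real care is needed, but at the continuous level the proposition is immediate.
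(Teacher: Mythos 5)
Your proof is correct and is exactly the intended argument: the paper itself states this proposition without proof (it is imported from the proof of Theorem 4.2 in \cite{CCP2011} as an immediate quotient-rule identity), and your one-line computation, together with the sensible remark that $p^{\infty}>0$ on $(-\infty,V_F)$ by the explicit formula \eqref{stationary}, fully justifies it. Nothing further is needed.
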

\begin{proposition}
\label{Lemma2}
\begin{equation}\label{intro:ss}
F^\infty(v)=-vp^{\infty}-\partial_vp^{\infty}=N^{\infty}H(v-V_R),
\end{equation}
where \(F^\infty(v)\) stands for the flux function for \eqref{eq:ss} and \(H(v)\) denotes the Heaviside function.
\end{proposition}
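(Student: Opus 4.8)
The plan is to recognize that in the linear stationary setting ($a \equiv 1$, $b = 0$) the first equation in \eqref{eq:ss} is nothing more than the statement that the stationary flux
\[ F^\infty(v) = -a\,\partial_v p^\infty + (-v+bN^\infty)p^\infty = -vp^\infty - \partial_v p^\infty, \]
read off from the flux definition \eqref{eq:flux}, has vanishing spatial derivative on each side of $V_R$. Thus $F^\infty$ is constant on $(-\infty, V_R)$ and constant on $(V_R, V_F)$, and the entire task reduces to identifying these two constants from the boundary data and the interface condition.

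First I would pin down the left constant using the decay at $v=-\infty$. Since $p^\infty$ is an integrable stationary density, both $p^\infty(v)$ and $\partial_v p^\infty(v)$ tend to zero as $v\to-\infty$ --- this can be read directly off the explicit profile \eqref{stationary} --- so $F^\infty(-\infty)=0$ and hence $F^\infty\equiv 0$ on $(-\infty,V_R)$. Next I would fix the right constant by converting the derivative jump in \eqref{eq:ss} into a flux jump at $V_R$. Using the continuity $p^\infty(V_R^-)=p^\infty(V_R^+)$ together with $\partial_v p^\infty(V_R^-)=\partial_v p^\infty(V_R^+)+N^\infty$ (the interface condition in \eqref{eq:ss} with $a=1$), a direct subtraction yields
\[ F^\infty(V_R^+)-F^\infty(V_R^-) = \partial_v p^\infty(V_R^-)-\partial_v p^\infty(V_R^+) = N^\infty, \]
which is exactly the stationary version of the flux shift \eqref{fluxshift}. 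Combined with $F^\infty(V_R^-)=0$, this forces $F^\infty\equiv N^\infty$ on $(V_R,V_F)$, and piecing the two constants together gives $F^\infty(v)=N^\infty H(v-V_R)$.

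As a consistency check I would evaluate at the top boundary: since $p^\infty(V_F)=0$, one gets $F^\infty(V_F^-)=-\partial_v p^\infty(V_F)=N^\infty$, in agreement with the defining relation \eqref{NinftyC} for $N^\infty$, so no contradiction arises at the reset boundary.

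I do not expect a serious obstacle here, as the statement is essentially a repackaging of the stationary equation and its interface conditions into divergence form. The only two points that require genuine care are justifying the decay (and hence the vanishing of $F^\infty$) at $-\infty$, which rests on the integrability of $p^\infty$ and the explicit formula \eqref{stationary}, and getting the sign right in the derivative jump so that the flux \emph{increases} by precisely $N^\infty$ across $V_R$ rather than decreasing. Both are dispatched by careful bookkeeping of the interface condition in \eqref{eq:ss}.
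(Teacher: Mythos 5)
Your proof is correct and takes essentially the same route as the paper, which simply asserts Proposition \ref{Lemma2} as ``a distinct deduction of Equation \eqref{eq:ss}'': writing the stationary equation in divergence form so that $F^\infty$ is constant on each side of $V_R$, fixing the left constant to $0$ via the decay of the explicit profile \eqref{stationary} as $v\to-\infty$, and extracting the jump $F^\infty(V_R^+)-F^\infty(V_R^-)=N^\infty$ from the interface condition with $a\equiv 1$ is precisely that deduction carried out in detail. Your sign bookkeeping in the derivative jump is right, and the consistency check $F^\infty(V_F^-)=-\partial_v p^\infty(V_F)=N^\infty$ against \eqref{NinftyC} is a sound extra confirmation.
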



The first proposition is an important equality that gives the relation between \(p(v,t)\) and \(p^\infty(t)\). The second proposition is a distinct deduction of Equation \eqref{eq:ss}, describing the property of stationary solution. In the proof for discrete relative entropy, the discrete versions of the two propositions are necessarily useful.

\subsection{Numerical relative entropy}

In this subsection, we aim to show the discrete relative entropy estimate, which is a key feature of the proposed semi-discrete scheme. We assume \(a(N(t))\equiv1\) and \(b=0\) in this subsection, which matches the known result of continuous counterpart\cite{CCP2011}.

Note that the flux function of Equation \eqref{eq:fk} (see Equation \eqref{eq:flux}) has two terms but the numerical flux given by Scharfetter-Gummel discretization (see Equation \eqref{SGd}) only has one term. So it is nature to divide the Scharfetter-Gummel flux into two terms like Equation \eqref{eq:flux}:
\begin{equation}\label{divide}
F_{i+\frac{1}{2}}(t)=-M_{i+\frac{1}{2}}\frac{\frac{p_{i+1}}{M_{i+1}}-\frac{p_i}{M_i}}{h}=-\frac{p_{i+1}-p_i}{h}-g_{i+\frac{1}{2}}\frac{p_i+p_{i+1}}{2},
\end{equation}
where
\begin{equation}\label{eq:g}
{g_{i+\frac{1}{2}}=\frac{2}{h}\frac{M_i-M_{i+1}}{M_i+M_{i+1}}\approx v_{i+\frac{1}{2}}.}
\end{equation}
Equation \eqref{divide} holds if we take \(M_{i+\frac{1}{2}}\) as harmonic mean (\ref{harmonic}) of \(M_i\) and \(M_{i+1}\), i.e. \(M_{i+\frac{1}{2}}=M_{i+\frac{1}{2}}^H\). {We also note that \(g_{i+\frac{1}{2}}\) is constant in \(t\) since we have assumed that $a\equiv 1$ and \(b=0\) (see equation \eqref{eq:Mi} for definition of \(M_i\)).} Through modification of Equation \eqref{divide}, we can apply the techniques that are used in the continuous case.

{To introduce the numerical relative entropy, it is necessary to define \(p^\infty_i\) as an approximation to the stationary solution at the grid points.  We assume the Dirichlet boundary condition \(p_0^\infty=p_n^\infty=0\) on the numerical stationary solution. For \(p_i^\infty\)(\(1\leq i\leq n-1\)), we aim to determine them through \(n-1\) independent equations.}

{Firstly, we look at the stationary firing rate. We assume that the discrete stationary firing rate \(N_h^\infty\) equals the continuous stationary firing rate \(N^\infty\) given in Equation \eqref{NinftyC}, i.e, \(N_h^\infty=N^\infty\). We emphasize that \(N_h^\infty\) is considered as a fixed parameter when determining \(p_i^\infty\).}

{Like Equation \eqref{eq:Nh}, we also apply first order finite difference approximation for the numerical stationary firing rate \(N^\infty_h\) and we put forward an equation for \(p_{n-1}^\infty\):
\begin{equation}
\label{disfr}
-\frac{0-p_{n-1}^{\infty}}{h}=N^\infty
\end{equation}
Then we can derive the 'discrete L'Hospital rule':
\begin{equation}\label{disLh}
\frac{N_h}{N^{\infty}_h}=\frac{p_{n-1}}{p_{n-1}^{\infty}}.
\end{equation}}

{In light of Equation \eqref{intro:ss} and \eqref{divide}, it is also reasonable to impose that \(p^\infty_i\)(\(1\leq i\leq n-1\)) satisfies
\begin{equation}\label{nss}
F_{i+\frac{1}{2}}^{\infty}=
-\frac{p_{i+1}^{\infty}-p_i^{\infty}}{h}-g_{i+\frac{1}{2}}\frac{p_i^{\infty}+p_{i+1}^{\infty}}{2}
=N^{\infty}_hH\left(v_{i+\frac{1}{2}}-V_R\right),\quad 1\leq i\leq n-2,
\end{equation}
which is a discrete form of Equation \eqref{divide}.} 

{Like the continuous stationary solution defined in Equation \eqref{stationary} with given \(N^\infty\), \(p_i^\infty\)(\(1\leq i\leq n-1\)) can be solved with given \(N_h^\infty\) through Equation \eqref{disfr} and \eqref{nss}. To show this, we take modification on Equation \eqref{nss}:
\begin{equation}\label{pinftyR}
\left(\frac{1}{h}-\frac{g_{i+\frac{1}{2}}}{2}\right)p_{i+1}^\infty+N_h^\infty H\left(v_{i+\frac{1}{2}}-V_R\right)=\left(\frac{1}{h}+\frac{g_{i+\frac{1}{2}}}{2}\right)p_i^\infty,\quad 1\leq i\leq n-2.
\end{equation}
Since \(p_{i-1}^\infty\) can be solved in Equation \eqref{disfr}, \(p_i^\infty\)(\(i=n-2,n-1,\cdots,1\)) can be determined one by one through Equation \eqref{pinftyR}.}

{To finish the proof of Theorem \ref{thm:nre}, we further need to ensure that \(p_i^\infty>0\). Hence we make a technique assumption as follows:
\begin{equation}\label{TechAssump}
\left|g_{i+\frac{1}{2}}\right|\leq\frac{2}{h}.
\end{equation}
Since \(g_{i+\frac{1}{2}}\) is bounded for \(1\leq i\leq n-2\) due to Equation \eqref{eq:g}, Equation \eqref{TechAssump} is valid provided that \(h\) is small enough. According to equation \eqref{disfr}, if we set \(N_h=N^\infty>0\), then the numerical stationary solution \(p_i^\infty\) is positive for every \(i\). }

After preparation above, we can give the definition of numerical relative entropy:
\begin{equation}\label{nre}
S(t)=\sum_{i=1}^{n-1}hG\left(\frac{p_i}{p_i^{\infty}}\right)p_i^{\infty}.
\end{equation}

{\begin{theorem}
\label{thm:nre}
Consider the Fokker-Planck Equation \eqref{eq:fk} when \(a\equiv1\), \(b=0\). Assume that \(p^{\infty}_i\) satisfy Equation \eqref{nss} and \(N_h^\infty\) satisfies Equation \eqref{disfr}, and the technique assumption \eqref{TechAssump} is satisfied. Consider the semi-discrete scheme \eqref{sch:modflux}, which is associated with the discrete relative entropy defined by Equation \eqref{nre}, where \(G(x)=\frac{1}{2}(x-1)^2\). Then for any spatial size \(h\),  the discrete relative entropy is nonincreasing in time, i.e.
\begin{eqnarray}\label{eq:nred}
\frac{d}{dt}S(t)
&=&\sum_{i=1}^{n-2}\left(G'\left(\frac{p_{i+1}}{p_{i+1}^{\infty}}\right)-G'\left(\frac{p_i}{p_i^{\infty}}\right)\right)
\left(\frac{p_{i+1}}{p_{i+1}^{\infty}}-\frac{p_i}{p_i^{\infty}}\right)
\left(\left(-\frac{1}{2h}-\frac{g_{i+\frac{1}{2}}}{4}\right)p_{i+1}^\infty\right.\nonumber\\&&\left.+\left(-\frac{1}{2h}+\frac{g_{i+\frac{1}{2}}}{4}\right)p_{i}^\infty\right)-\frac{N^{\infty}_h}{2}\left(\frac{p_l}{p_l^{\infty}}-\frac{N_h}{N^{\infty}_h}\right)^2\leq0.
\end{eqnarray}
\end{theorem}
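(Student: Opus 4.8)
The plan is to mirror the continuous relative-entropy argument of Theorem \ref{thm:re} at the discrete level, replacing the two differential identities (Propositions \ref{Lemma1} and \ref{Lemma2}) by their summation-by-parts counterparts. Writing $r_i = p_i/p_i^{\infty}$ and using that $p_i^{\infty}$ is time-independent, I would first differentiate the entropy \eqref{nre} to obtain $\frac{d}{dt}S = \sum_{i=1}^{n-1} h\,G'(r_i)\,\frac{dp_i}{dt}$. Substituting the semi-discrete scheme \eqref{sch:modflux} and performing an Abel summation converts this into a sum over interior fluxes,
\[
\frac{d}{dt}S=\sum_{i=1}^{n-2}\bigl(G'(r_{i+1})-G'(r_i)\bigr)\tilde{F}_{i+\frac{1}{2}},
\]
where the two boundary contributions drop out precisely because the scheme is constructed with $\tilde{F}_{\frac{1}{2}}=\tilde{F}_{n-\frac{1}{2}}=0$.

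The core algebraic step is to re-express $\tilde{F}_{i+\frac{1}{2}}$ in the relative variables $r_i$. I would establish the discrete product rule (the analog of Proposition \ref{Lemma1})
\[
r_{i+1}p_{i+1}^{\infty}-r_ip_i^{\infty}=\frac{r_i+r_{i+1}}{2}\bigl(p_{i+1}^{\infty}-p_i^{\infty}\bigr)+\frac{p_i^{\infty}+p_{i+1}^{\infty}}{2}\bigl(r_{i+1}-r_i\bigr),
\]
together with the midpoint identity $\frac{r_ip_i^{\infty}+r_{i+1}p_{i+1}^{\infty}}{2}=\frac{r_i+r_{i+1}}{2}\cdot\frac{p_i^{\infty}+p_{i+1}^{\infty}}{2}+\frac{1}{4}(r_{i+1}-r_i)(p_{i+1}^{\infty}-p_i^{\infty})$. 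Inserting these into the split flux \eqref{divide} and using the discrete stationary relation \eqref{nss} (the analog of Proposition \ref{Lemma2}), the average-ratio piece collects into $\frac{r_i+r_{i+1}}{2}N_h^{\infty}H(v_{i+\frac{1}{2}}-V_R)$, yielding
\[
\tilde{F}_{i+\frac{1}{2}}=N_h^{\infty}\Bigl(\tfrac{r_i+r_{i+1}}{2}-\tfrac{N_h}{N_h^{\infty}}\Bigr)H\bigl(v_{i+\frac{1}{2}}-V_R\bigr)-(r_{i+1}-r_i)\Bigl[\bigl(\tfrac{1}{2h}+\tfrac{g_{i+\frac{1}{2}}}{4}\bigr)p_{i+1}^{\infty}+\bigl(\tfrac{1}{2h}-\tfrac{g_{i+\frac{1}{2}}}{4}\bigr)p_i^{\infty}\Bigr].
\]
This cleanly separates the bulk part (the $r_{i+1}-r_i$ term) from the jump part (the Heaviside term).

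For the bulk term I would use $G(x)=\frac{1}{2}(x-1)^2$, so that $G'(r_{i+1})-G'(r_i)=r_{i+1}-r_i$ and the product with the flux produces $-(r_{i+1}-r_i)^2$ times the bracket. Positivity of $p_i^{\infty}$ (guaranteed by $N^{\infty}>0$ and the construction via \eqref{disfr} and \eqref{pinftyR}), together with the technical assumption \eqref{TechAssump}, makes both coefficients $\frac{1}{2h}\pm\frac{g_{i+\frac{1}{2}}}{4}$ nonnegative, so each bulk summand is nonpositive, matching the first line of \eqref{eq:nred}. For the jump term, the Heaviside restricts the sum to indices $i\geq l$ (since $V_R=v_l$); the quadratic $G$ again gives the factor $r_{i+1}-r_i$, and the sum $\sum_{i\geq l}(r_{i+1}-r_i)\bigl(\frac{r_i+r_{i+1}}{2}-\frac{N_h}{N_h^{\infty}}\bigr)$ telescopes to $\frac{1}{2}(r_{n-1}^2-r_l^2)-\frac{N_h}{N_h^{\infty}}(r_{n-1}-r_l)$. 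Invoking the discrete L'Hospital rule \eqref{disLh}, $\frac{N_h}{N_h^{\infty}}=r_{n-1}$, this collapses into the perfect square $-\frac{N_h^{\infty}}{2}\bigl(\frac{p_l}{p_l^{\infty}}-\frac{N_h}{N_h^{\infty}}\bigr)^2\leq 0$, reproducing the boundary term in \eqref{eq:nred}.

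The main obstacle I anticipate is discovering the exact discrete decomposition of $\tilde{F}_{i+\frac{1}{2}}$: the continuous proof hides the needed identities inside the chain rule, whereas at the discrete level one must find the right midpoint splittings so that the average-ratio piece aligns with the discrete stationary flux \eqref{nss} and the remainder depends only on $r_{i+1}-r_i$. The harmonic-mean choice \eqref{harmonic} for $M_{i+\frac{1}{2}}$ is essential here, since it is exactly what makes \eqref{divide} hold and hence what makes $g_{i+\frac{1}{2}}$ appear symmetrically; without it the telescoping in the jump term and the sign control in the bulk term would both fail. The remainder is careful bookkeeping, with the quadratic entropy $G(x)=\frac{1}{2}(x-1)^2$ doing the work of turning differences into squares.
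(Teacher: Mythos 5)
Your proposal is correct and is essentially the paper's own proof: it relies on the same ingredients --- summation by parts with the vanishing boundary fluxes $\tilde{F}_{\frac{1}{2}}=\tilde{F}_{n-\frac{1}{2}}=0$, the discrete product rule \eqref{trans}, the harmonic-mean flux splitting \eqref{divide}, the stationary relation \eqref{nss}, assumption \eqref{TechAssump} to fix the sign of the bulk bracket, and the discrete L'Hospital rule \eqref{disLh} to complete the square in the telescoped jump term. The only difference is bookkeeping: you assemble the algebra into one identity for $\tilde{F}_{i+\frac{1}{2}}$ in the relative variables $p_i/p_i^{\infty}$ before multiplying by the difference of $G'$, whereas the paper distributes the same manipulations over the intermediate sums $S_1$ through $S_4$ and terms (a)--(c).
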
}

Note that in Theorem \ref{thm:nre}, a specific quadratic choice of convex function \(G(x)=\frac{1}{2}(x-1)^2\) is made since \(G\) is also chose as a quadratic function in continuous relative entropy study in \cite{CCP2011}. In fact, even a special choice of convex function \(G\) is meaningful for the numerical analysis as a first long time asymptotic property for numerical scheme of Equation \eqref{eq:fk}.

Before showing the proof, we {deduce} the relations between \(p_i\) and \(p_i^\infty\). The following equality is the discrete analogue of Proposition 3.2, which serves as a necessary component for the proof of Theorem \ref{thm:nre},
\begin{equation}\label{trans}
\frac{p_{i+1}-p_i}{h}=\frac{p_i^{\infty}+p_{i+1}^{\infty}}{2}
\frac{1}{h}\left(\frac{p_{i+1}}{p_{i+1}^{\infty}}-\frac{p_i}{p_i^{\infty}}\right)
+\frac{1}{2}\left(\frac{p_{i+1}}{p_{i+1}^{\infty}}+\frac{p_i}{p_i^{\infty}}\right)\frac{p_{i+1}^{\infty}-p_i^{\infty}}{h}.
\end{equation}
The proof of Equation \eqref{trans} is rather straightforward and thus is omitted.

\begin{proof}
First we calculate the derivative of numerical relative entropy \eqref{nre}. According to Equation \eqref{mod} and \eqref{sch:modflux}:
\begin{eqnarray}
\label{prf:mod}
\frac{d}{dt}S(t)
&=&\sum_{i=1}^{n-1}G'\left(\frac{p_i}{p_i^{\infty}}\right)\left(\tilde{F}_{i-\frac{1}{2}}-\tilde{F}_{i+\frac{1}{2}}\right)
\nonumber\\
&=&N_hG'\left(\frac{p_l}{p_l^{\infty}}\right)+\sum_{i=1}^{n-1}G'\left(\frac{p_i}{p_i^{\infty}}\right)
\left(F_{i-\frac{1}{2}}-F_{i+\frac{1}{2}}\right)
\end{eqnarray}

Then plug Equation \eqref{divide} into Equation \eqref{prf:mod}:
\begin{eqnarray}\label{eq:sddre}
\frac{d}{dt}S(t)
&=&N_hG'\left(\frac{p_l}{p_l^{\infty}}\right)-N_hG'\left(\frac{p_{n-1}}{p_{n-1}^{\infty}}\right)\nonumber\\&&
+\sum_{i=1}^{n-2}\left(G'\left(\frac{p_{i+1}}{p_{i+1}^{\infty}}\right)-G'\left(\frac{p_i}{p_i^{\infty}}\right)\right)
\left(-\frac{p_{i+1}-p_i}{h}-g_{i+\frac{1}{2}}\frac{p_i+p_{i+1}}{2}\right)\nonumber\\
&=&N_hG'\left(\frac{p_l}{p_l^{\infty}}\right)-N_hG'\left(\frac{p_{n-1}}{p_{n-1}^{\infty}}\right)+S_1+S_2,
\end{eqnarray}
where
\[
S_1=\sum_{i=1}^{n-2}\left(G'\left(\frac{p_{i+1}}{p_{i+1}^{\infty}}\right)-G'\left(\frac{p_i}{p_i^{\infty}}\right)\right)\left(-\frac{p_{i+1}-p_i}{h}\right),
\]
and
\[
S_2=\sum_{i=1}^{n-2}\left(G'\left(\frac{p_{i+1}}{p_{i+1}^{\infty}}\right)-G'\left(\frac{p_i}{p_i^{\infty}}\right)\right)\left(-g_{i+\frac{1}{2}}\frac{p_i+p_{i+1}}{2}\right).
\]

Plug Equation \eqref{trans} into \(S_1\), and we have
\begin{eqnarray}
\label{prf:s1}
S_1
&=&\sum_{i=1}^{n-2}\left(G'\left(\frac{p_{i+1}}{p_{i+1}^{\infty}}\right)-G'\left(\frac{p_i}{p_i^{\infty}}\right)\right)
\left(-\frac{p_i^{\infty}+p_{i+1}^{\infty}}{2}\frac{1}{h}\left(\frac{p_{i+1}}{p_{i+1}^{\infty}}-\frac{p_i}{p_i^{\infty}}\right)\right)\nonumber\\&&
+\sum_{i=1}^{n-2}\left(G'\left(\frac{p_{i+1}}{p_{i+1}^{\infty}}\right)-G'\left(\frac{p_i}{p_i^{\infty}}\right)\right)\left(-\frac{1}{2}\left(\frac{p_{i+1}}{p_{i+1}^{\infty}}+
\frac{p_i}{p_i^{\infty}}\right)\frac{p_{i+1}^{\infty}-p_i^{\infty}}{h}\right).
\end{eqnarray}

Next with Equation \eqref{nss}, we can further rewrite
\begin{eqnarray}\label{eq:s2}
S_1
&=&\sum_{i=1}^{n-2}\left(G'\left(\frac{p_{i+1}}{p_{i+1}^{\infty}}\right)-G'\left(\frac{p_i}{p_i^{\infty}}\right)\right)
\left(-\frac{p_i^{\infty}+p_{i+1}^{\infty}}{2}\frac{1}{h}\left(\frac{p_{i+1}}{p_{i+1}^{\infty}}-\frac{p_i}{p_i^{\infty}}\right)\right)
\qquad(a)\nonumber\\&&
+\sum_{i=1}^{n-2}\left(G'\left(\frac{p_{i+1}}{p_{i+1}^{\infty}}\right)-G'\left(\frac{p_i}{p_i^{\infty}}\right)\right)
\frac{g_{i+\frac{1}{2}}}{2}\left(\frac{p_{i+1}}{p_{i+1}^{\infty}}+
\frac{p_i}{p_i^{\infty}}\right)\frac{p_i^{\infty}+p_{i+1}^{\infty}}{2}\qquad(b)\nonumber\\&&
+N^{\infty}_h\sum_{i=l}^{n-2}\left(G'\left(\frac{p_{i+1}}{p_{i+1}^{\infty}}\right)-G'\left(\frac{p_i}{p_i^{\infty}}\right)\right)
\frac{1}{2}\left(\frac{p_{i+1}}{p_{i+1}^{\infty}}+\frac{p_i}{p_i^{\infty}}\right).\qquad\qquad\qquad(c)
\nonumber\\
\end{eqnarray}

Now we group the right hand side of Equation \eqref{eq:s2} into three parts (marked by letter): Term (a) is the nonpositive term because \(G\) is a convex function. Term (b) is close to \(-S_2\), with which it can be simplified due to cancellation. Term (c) still needs further simplification. So now we concentrate on term (b) and term (c).

We compare term (b) of Equation \eqref{eq:s2} to \(-S_2\), which is denoted by $S_3$ in the following:
{\begin{eqnarray}
S_3&=&\sum_{i=1}^{n-2}\left(G'\left(\frac{p_{i+1}}{p_{i+1}^{\infty}}\right)-G'\left(\frac{p_i}{p_i^{\infty}}\right)\right)
\frac{g_{i+\frac{1}{2}}^{\infty}}{2}\left(\frac{p_{i+1}}{p_{i+1}^{\infty}}+
\frac{p_i}{p_i^{\infty}}\right)\frac{p_i^{\infty}+p_{i+1}^{\infty}}{2}-(-S_2)\nonumber\\
&=&\sum_{i=1}^{n-2}\left(G'\left(\frac{p_{i+1}}{p_{i+1}^{\infty}}\right)-G'\left(\frac{p_i}{p_i^{\infty}}\right)\right)
\left(\frac{g_{i+\frac{1}{2}}}{2}\left(\frac{p_{i+1}}{p_{i+1}^{\infty}}+\frac{p_i}{p_i^{\infty}}\right)\frac{p_i^{\infty}+p_{i+1}^{\infty}}{2}
-g_{i+\frac{1}{2}}\frac{p_i+p_{i+1}}{2}\right)\nonumber\\
&=&-\sum_{i=1}^{n-2}\left(G'\left(\frac{p_{i+1}}{p_{i+1}^{\infty}}\right)-G'\left(\frac{p_i}{p_i^{\infty}}\right)\right)
\frac{g_{i+\frac{1}{2}}}{4}\left(p_{i+1}^{\infty}-p_i^{\infty}\right)\left(\frac{p_{i+1}}{p_{i+1}^{\infty}}-\frac{p_i}{p_i^{\infty}}\right).
\label{term b}
\end{eqnarray}}

%

{Though \(S_3\) may not be nonpositive, we aim to show that \(S_3\) added with the nonpositive term (a) in Equation \eqref{eq:s2} is still nonpositive:
\begin{eqnarray}
S_4&=&\sum_{i=1}^{n-2}\left(G'\left(\frac{p_{i+1}}{p_{i+1}^{\infty}}\right)-G'\left(\frac{p_i}{p_i^{\infty}}\right)\right)
\left(-\frac{p_i^{\infty}+p_{i+1}^{\infty}}{2}\frac{1}{h}\left(\frac{p_{i+1}}{p_{i+1}^{\infty}}-\frac{p_i}{p_i^{\infty}}\right)\right)\nonumber\\
&&-\sum_{i=1}^{n-2}\left(G'\left(\frac{p_{i+1}}{p_{i+1}^{\infty}}\right)-G'\left(\frac{p_i}{p_i^{\infty}}\right)\right)
\frac{g_{i+\frac{1}{2}}}{4}\left(p_{i+1}^{\infty}-p_i^{\infty}\right)\left(\frac{p_{i+1}}{p_{i+1}^{\infty}}-\frac{p_i}{p_i^{\infty}}\right)\nonumber\\
&=&\sum_{i=1}^{n-2}\left(G'\left(\frac{p_{i+1}}{p_{i+1}^{\infty}}\right)-G'\left(\frac{p_i}{p_i^{\infty}}\right)\right)
\left(\frac{p_{i+1}}{p_{i+1}^{\infty}}-\frac{p_i}{p_i^{\infty}}\right)
\left(\left(-\frac{1}{2h}-\frac{g_{i+\frac{1}{2}}}{4}\right)p_{i+1}^\infty\right.\nonumber\\&&\left.+\left(-\frac{1}{2h}+\frac{g_{i+\frac{1}{2}}}{4}\right)p_{i}^\infty\right).
\end{eqnarray}
Recall that we have \(p_i^\infty>0\) provided that the technique assumption \eqref{TechAssump} is valid, then we conclude \(S_4\leq0\).}

Recall that \(G(x)=\frac{1}{2}(x-1)^2\), we can rewrite term (c) of Equation \eqref{eq:s2} as follows:
\begin{eqnarray}\label{termc}
&&N^{\infty}_h\sum_{i=l}^{n-2}\left(G'\left(\frac{p_{i+1}}{p_{i+1}^{\infty}}\right)-G'\left(\frac{p_i}{p_i^{\infty}}\right)\right)
\frac{1}{2}\left(\frac{p_{i+1}}{p_{i+1}^{\infty}}+\frac{p_i}{p_i^{\infty}}\right)\nonumber\\
&=&\frac{N^{\infty}_h}{2}\sum_{i=l}^{n-2}
\left(\left(\frac{p_{i+1}}{p_{i+1}^{\infty}}\right)^2-\left(\frac{p_i}{p_i^{\infty}}\right)^2\right)\nonumber\\
&=&\frac{N^{\infty}_h}{2}\left(\left(\frac{p_{n-1}}{p_{n-1}^{\infty}}\right)^2-\left(\frac{p_{l}}{p_{l}^{\infty}}\right)^2\right).
\end{eqnarray}
Therefore term (c) only consists boundary term.

Plug Equation \eqref{eq:s2}, \eqref{term b} and \eqref{termc} into Equation \eqref{eq:sddre}, the derivative of numerical relative entropy can be simplified:
\begin{eqnarray}\label{final}
\frac{d}{dt}S(t)&=& S_4+N_h\left(\frac{p_l}{p_l^{\infty}}-\frac{p_{n-1}}{p_{n-1}^{\infty}}\right)
+\frac{N^{\infty}_h}{2}\left(\left(\frac{p_{n-1}}{p_{n-1}^{\infty}}\right)^2-\left(\frac{p_{l}}{p_{l}^{\infty}}\right)^2\right).
\end{eqnarray}

With Equation \eqref{disLh} and \eqref{TechAssump}, we finally obtain
\begin{eqnarray}
\label{prf:final}
\frac{d}{dt}S(t)
&=&S_4+N_h\left(\frac{p_l}{p_l^{\infty}}-\frac{N_h}{N_h^{\infty}}\right)
+\frac{N^{\infty}_h}{2}\left(\left(\frac{N_h}{N^{\infty}_h}\right)^2-\left(\frac{p_{l}}{p_{l}^{\infty}}\right)^2\right)\nonumber\\
&=&\sum_{i=1}^{n-2}\left(G'\left(\frac{p_{i+1}}{p_{i+1}^{\infty}}\right)-G'\left(\frac{p_i}{p_i^{\infty}}\right)\right)
\left(\frac{p_{i+1}}{p_{i+1}^{\infty}}-\frac{p_i}{p_i^{\infty}}\right)
\left(\left(-\frac{1}{2h}-\frac{g_{i+\frac{1}{2}}}{4}\right)p_{i+1}^\infty\right.\nonumber\\&&\left.+\left(-\frac{1}{2h}+\frac{g_{i+\frac{1}{2}}}{4}\right)p_{i}^\infty\right)-\frac{N^{\infty}_h}{2}\left(\frac{p_l}{p_l^{\infty}}-\frac{N_h}{N^{\infty}_h}\right)^2\leq0.
\end{eqnarray}

\end{proof}

\begin{remark}
If we compare the discrete relative entropy estimate \eqref{prf:final} with its continuous version \eqref{eq:red}, the term
\begin{equation}
\label{rmk:sumterm}
\sum_{i=1}^{n-2}\left(G'\left(\frac{p_{i+1}}{p_{i+1}^{\infty}}\right)-G'\left(\frac{p_i}{p_i^{\infty}}\right)\right)
\left(\frac{p_{i+1}}{p_{i+1}^{\infty}}-\frac{p_i}{p_i^{\infty}}\right)
\left(\left(-\frac{1}{2h}-\frac{g_{i+\frac{1}{2}}}{4}\right)p_{i+1}^\infty+\left(-\frac{1}{2h}+\frac{g_{i+\frac{1}{2}}}{4}\right)p_{i}^\infty\right)
\end{equation}
is the bulk dissipation term, corresponding to the integration term in Equation \eqref{eq:red}
\[
-\int_{-\infty}^{V_F}p^{\infty} G''\left(\frac{p}{p^{\infty}}\right)\left(\partial_v\left(\frac{p}{p^{\infty}}\right)\right)^2dv.
\]
While
\begin{equation}
\label{rmk:boundterm}
-\frac{N^{\infty}_h}{2}\left(\frac{p_l}{p_l^{\infty}}-\frac{N_h}{N^{\infty}_h}\right)^2
\end{equation}
are the boundary terms that account for the contribution of the flux shift, which are analogous to:
\[
-N^{\infty}\left(G\left(\frac{N}{N^{\infty}}\right)-G\left(\frac{p}{p^{\infty}}\right)-\left(\frac{N}{N^{\infty}}-\frac{p}{p^{\infty}}\right)
G'\left(\frac{p}{p^{\infty}}\right)\right)\Bigg|_{V_R}
\]
in Equation \eqref{eq:red}.

In  previous literatures, see \cite{JW2011} for example, numerical methods that are based on the Scharfetter-Gummel flux approach for  Fokker-Planck equations without  flux shift, the bulk dissipation term is common for numerical relative entropy. Our work shows that the contribution of flux shift is also nonpositive, given by Equation \eqref{rmk:boundterm}.

\end{remark}



\section{Numerical Tests}

In this section we verify the properties of the proposed fully-discrete scheme through a series of numerical tests. We test both the explicit scheme (given by Equation \eqref{sch:exp}) and the semi-implicit scheme (given by Equation \eqref{sch:semi}). In our simulations we choose a uniform mesh in \(v\), for \(v\in[V_{\min}, V_F]\). The value \(V_{\min}\) (less than \(V_R\)) is adjusted in the numerical experiments to fulfill that \(p(V_{\min},t)\approx 0\), while \(V_F\) is fixed to $2$. Without special notice, \(V_R\) is set to be $1$ and \(V_{\min}\) is set to be $-4$.

The tests are structured as follows. In subsection \ref{order} we test the order of accuracy in both space and time. Then in subsection \ref{longtime}, we consider different dynamic behaviors of the solutions, including stationary solutions, blow-up solutions and relative entropies for both inhibitory systems and excitatory systems. Finally in subsection \ref{modify}, we consider a modified NNLIF model involving the transmission delay and the refractory state. We introduce numerical schemes for the modified model according to the Scharfetter-Gummel discretization and show that some numerical solutions exhibit time periodic structures.

We choose two types of distributions as initial conditions. The first one is the Gaussian distribution:
\begin{equation}
p_{G}(v)=\frac{1}{\sqrt{2\pi}\sigma_0 M_0}e^{-\frac{(v-v_0)^2}{2\sigma_0^2}},
\end{equation}
where \(v_0\) and \(\sigma_0\) are two given parameters and \(M_0\) denotes a normalization factor such that
\[
\int_{-\infty}^{V_F}p_{M}(v)dv=1.
\]
The other one is the stationary distribution with the equilibrium firing rate \(N^\infty\):
\begin{equation}
p^{\infty}(v)=\frac{N^{\infty}}{a(N^{\infty})}e^{-\frac{h\left(v,N^{\infty}\right)^2}{2a\left(N^{\infty}\right)}}
\int_{\max\{v,V_r\}}^{V_f}e^{\frac{h\left(\omega,N^{\infty}\right)^2}{2a\left(N^{\infty}\right)}}d\omega,
\label{ICstationary}
\end{equation}
where the firing rate \(N^\infty\) is chosen to satisfy
\[
\int_{-\infty}^{V_F}p^\infty(v)dv=1.
\]
In fact, the stationary distribution is a steady solution to Equation \eqref{eq:fk}.

\subsection{Order of accuracy}
\label{order}

In this part, we test the order of accuracy of the schemes. Since the exact solution is unavailable, we estimate the order of the error by
\[
O_h=\log_2 \frac{||\omega_h-\omega_{\frac{h}{2}}||}{||\omega_{\frac{h}{2}}-\omega_{\frac{h}{4}}||},
\]
where \(\omega_h\) is the numerical solution with step length \(h\). The term \(O_h\) above is an approximation for the accuracy order. Both \(L^1\) norm and \(L^\infty\) norm are considered.

Here we choose the Gaussian initial condition with \(v_0=0\) and \(\sigma_0^2=0.25\) and \(a=1\) and \(b=0.5\) in the equation. The numerical solution is computed till time $t=0.5$. The results by the explicit scheme and the semi-implicit scheme are shown in Table 1 and 2, respectively, from which we can observe the first order accuracy in time and almost second order accuracy in space (note that due to the treatment of flux shift, the spatial discretization is not exactly second order).



\begin{table}[htp]
\begin{minipage}[!t]{\columnwidth}
  \renewcommand{\arraystretch}{1.3}
  \centering
  \setlength{\tabcolsep}{0.6mm}{
  \begin{tabular}{|c|c|c|c|c|}
\hline
&\(||\omega_h-\omega_{\frac{h}{2}}||_{1}\)&
\(O_{h,L^1}\)&
\(||\omega_h-\omega_{\frac{h}{2}}||_{\infty}\)&
\(O_{h,L^\infty}\)
\\ \hline
\(h=\frac{6}{24}\)&
7.3998e-04&
1.725&
3.0168e-03&
1.633\\
\hline
\(h=\frac{6}{48}\)&
2.2377e-04&	
1.830&	
9.7252e-03&	
1.790\\
\hline
\(h=\frac{6}{96}\)&
6.2954e-05&
1.911&
2.8132e-04&
1.885\\
\hline
\(h=\frac{6}{192}\)&
1.6736e-05&
unstable&	
7.6157e-05&	
unstable\\
\hline
\(h=\frac{6}{384}\)&
unstable&
unstable&	
unstable&	
unstable\\
\hline
\(h=\frac{6}{762}\)&
unstable&
---&	
unstable&	
---\\
\hline
\end{tabular}}
  \end{minipage}
\\[12pt]
\begin{minipage}[!t]{\columnwidth}
  \renewcommand{\arraystretch}{1.3}
  \centering
  \setlength{\tabcolsep}{0.8mm}{
  \begin{tabular}{|c|c|c|c|c|}
\hline
&\(||\omega_h-\omega_{\frac{h}{2}}||_{1}\)&
\(O_{h,L^1}\)&
\(||\omega_h-\omega_{\frac{h}{2}}||_{\infty}\)&
\(O_{h,L^\infty}\)
\\
\hline
\(\tau=\frac{0.5}{1000}\)&
unstable&
unstable&
unstable&
unstable\\
\hline
\(\tau=\frac{0.5}{2000}\)&
unstable&
unstable&
unstable&
unstable\\
\hline
\(\tau=\frac{0.5}{4000}\)&
unstable&
unstable&
unstable&
unstable\\
\hline
\(\tau=\frac{0.5}{8000}\)&
1.3705e-06&
0.999&	
4.5973e-06&	
1.000\\
\hline
\(\tau=\frac{0.5}{16000}\)&
6.8523e-07&
1.000&	
2.2986e-06&	
1.000\\
\hline
\(\tau=\frac{0.5}{32000}\)&
3.4321e-07&
---&	
1.1493e-06&	
---\\
\hline
\end{tabular}}
\caption{Upper table: error with different spatial size, time step is fixed as \(\tau=\frac{0.5}{10000}\); Lower table: error with different temporal size, spatial size is fixed as \(h=\frac{6}{384}\).}
  \end{minipage}
  \vspace{-0.1cm}
\end{table}

\begin{table}[htp]
\begin{minipage}[!t]{\columnwidth}
  \renewcommand{\arraystretch}{1.3}
  \centering
  \setlength{\tabcolsep}{0.6mm}{
  \begin{tabular}{|c|c|c|c|c|}
\hline
&\(||\omega_h-\omega_{\frac{h}{2}}||_{1}\)&
\(O_{h,L^1}\)&
\(||\omega_h-\omega_{\frac{h}{2}}||_{\infty}\)&
\(O_{h,L^\infty}\)
\\ \hline
\(h=\frac{6}{24}\)&
7.3985e-04&
1.726&
3.0161e-03&
1.633\\
\hline
\(h=\frac{6}{48}\)&
2.2369e-04&	
1.830&	
9.7220e-03&	
1.790\\
\hline
\(h=\frac{6}{96}\)&
6.2910e-05&
1.912&
2.8117e-04&
1.886\\
\hline
\(h=\frac{6}{192}\)&
1.6713e-05&
1.970&	
7.6083e-05&	
1.941\\
\hline
\(h=\frac{6}{384}\)&
4.2646e-06&
2.020&	
1.9815e-05&	
1.972\\
\hline
\(h=\frac{6}{762}\)&
1.0517e-06&
---&	
5.0521e-06&	
---\\
\hline
\end{tabular}}
  \end{minipage}
\\[12pt]
\begin{minipage}[!t]{\columnwidth}
  \renewcommand{\arraystretch}{1.3}
  \centering
  \setlength{\tabcolsep}{0.8mm}{
  \begin{tabular}{|c|c|c|c|c|}
\hline
&\(||\omega_h-\omega_{\frac{h}{2}}||_{1}\)&
\(O_{h,L^1}\)&
\(||\omega_h-\omega_{\frac{h}{2}}||_{\infty}\)&
\(O_{h,L^\infty}\)
\\ \hline
\(\tau=\frac{0.5}{1000}\)&
1.0884e-05&
0.999&
3.6582e-05&
0.999\\
\hline
\(\tau=\frac{0.5}{2000}\)&
5.4426e-06&
0.999&
1.8291e-05&
0.999\\
\hline
\(\tau=\frac{0.5}{4000}\)&
2.7215e-06&
1.000&
9.1457e-06&
1.000\\
\hline
\(\tau=\frac{0.5}{8000}\)&
1.3608e-06&
1.000&	
4.5729e-06&	
1.000\\
\hline
\(\tau=\frac{0.5}{16000}\)&
6.8041e-07&
1.000&	
2.2865e-06&	
1.000\\
\hline
\(\tau=\frac{0.5}{32000}\)&
3.4021e-07&
---&	
1.1432e-06&	
---\\
\hline
\end{tabular}}
\caption{Upper table: error with different spatial size, time step is fixed as \(\tau=\frac{0.5}{10000}\); Lower table: error with different temporal size, spatial size is fixed as \(h=\frac{6}{384}\).
}
  \end{minipage}
  \vspace{-0.1cm}
\end{table}

Though the difference between numerical results of the semi-implicit scheme and the explicit scheme is small when both methods are stable, it is obvious that the semi-implicit scheme does not suffer from the parabolic type stability constraint. So in the rest of this section, we only apply the semi-implicit scheme.

\subsection{Global solution and blow-up}
\label{longtime}

In this subsection, we focus on global solutions and blow-up phenomena. We verify the relative entropy property proved in section 3. All the tests are done through the semi-implicit scheme with spatial size \(h=0.02\) and temporal size \(\tau=10^{-3}\).

1) Blow-up

As shown in \cite{CCP2011}, for an excitatory system ( \(b>0\) ), the solution may blow up in finite time for certain initial conditions. In Theorem 2.2 of \cite{CCP2011}, it is found  that the density function blows up due to large connectivity and initial condition that concentrates at \(V_F\). Figure \ref{blowup} exhibits this phenomena. Intuitively, the density function results in blow-up because it becomes much more concentrated at endpoint \(V_R\) as time involves. Also we can see as the firing rate increases, the solution at \(V_R\) becomes more and more steep.

\begin{figure}
\centering
\includegraphics[width=0.48\textwidth]{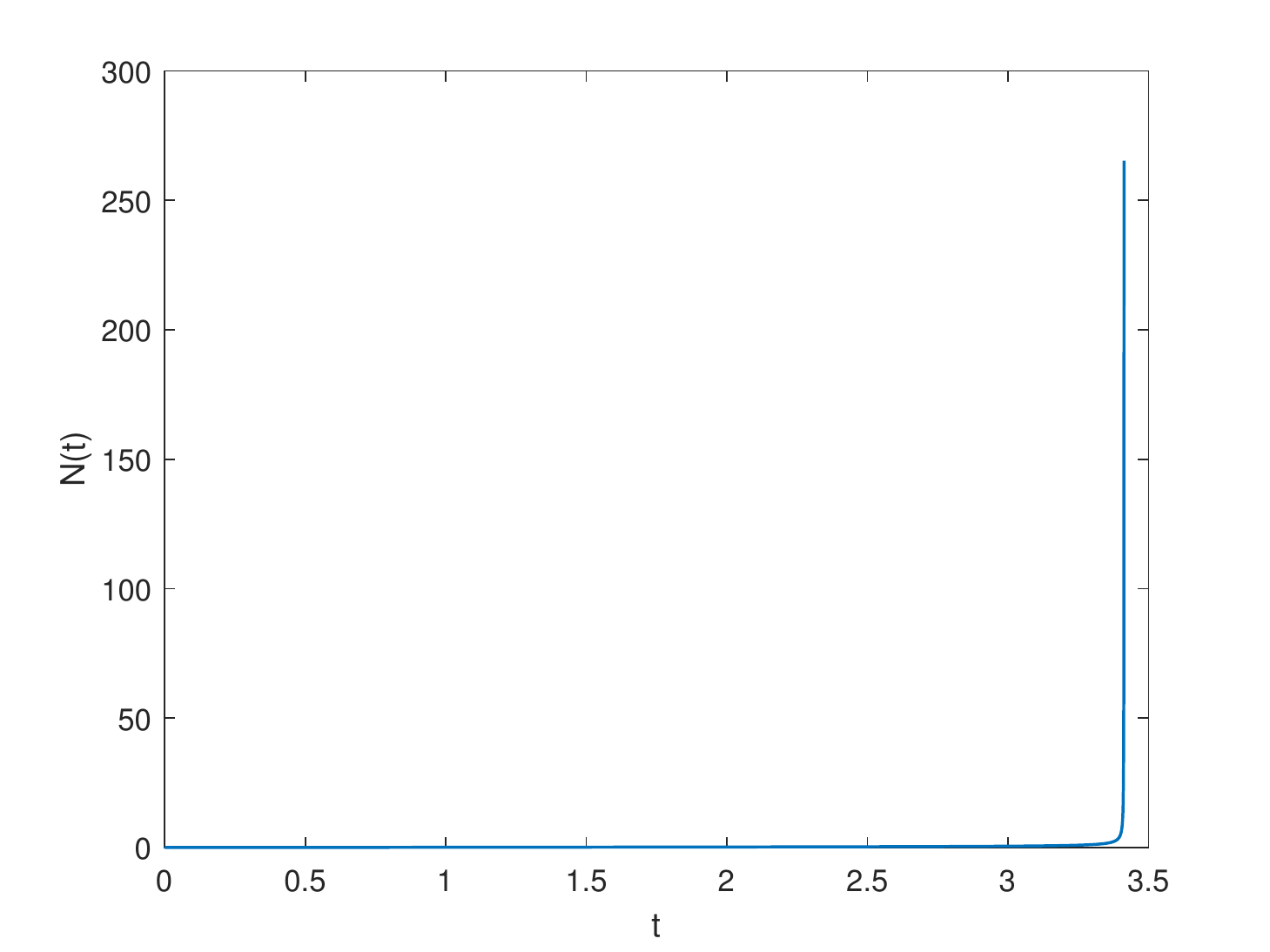}
\includegraphics[width=0.48\textwidth]{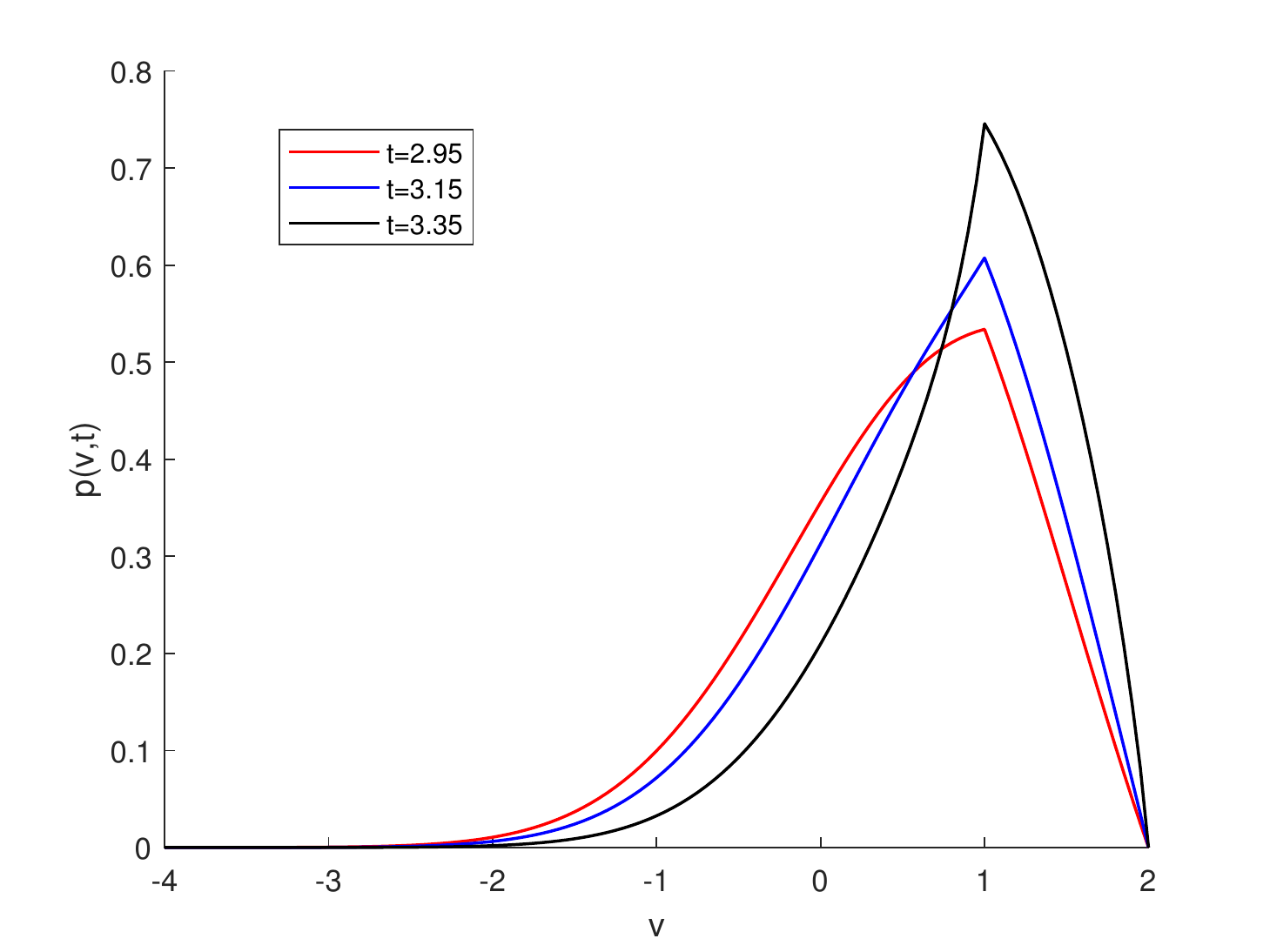}
\includegraphics[width=0.48\textwidth]{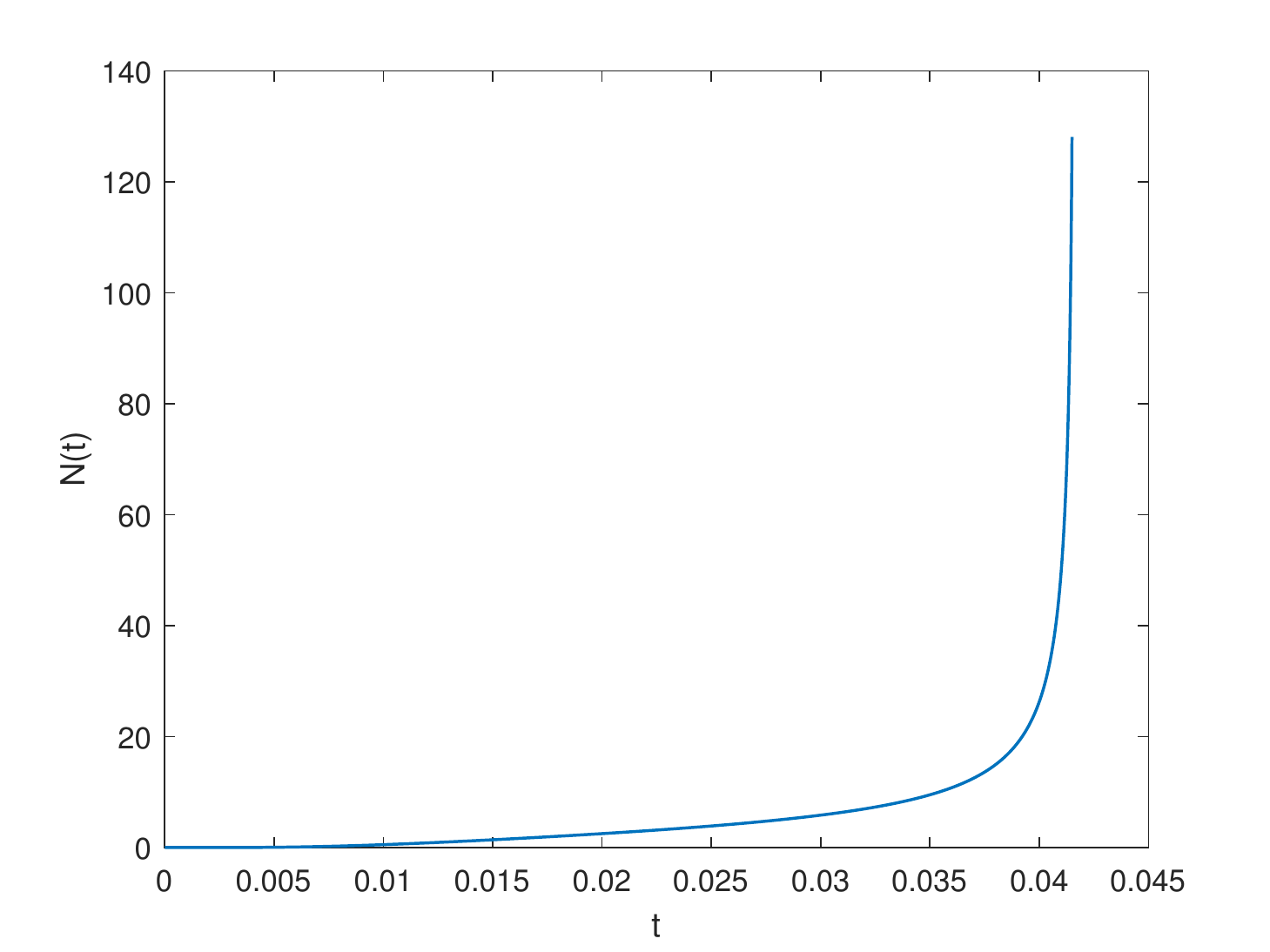}
\includegraphics[width=0.48\textwidth]{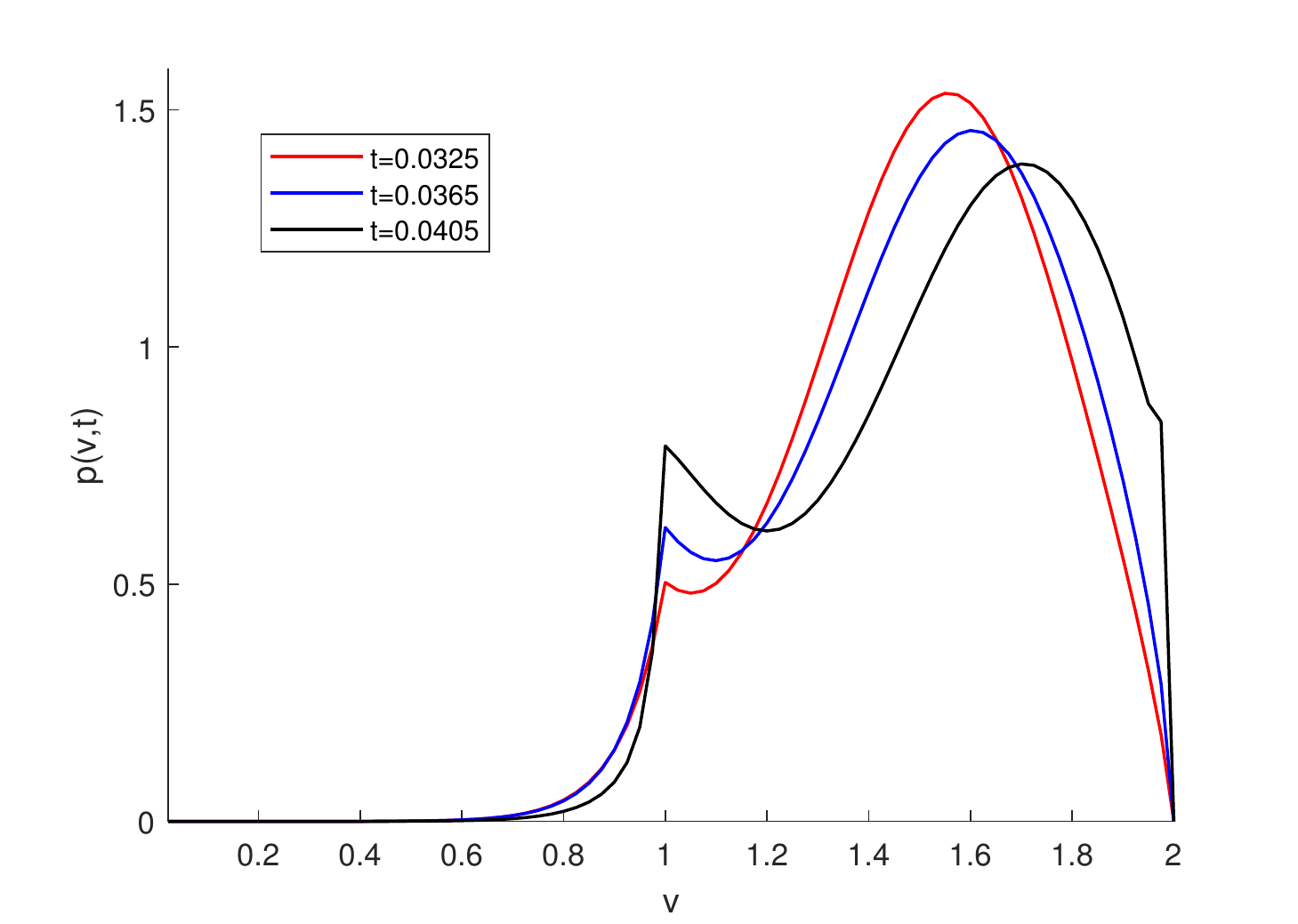}
\caption{(Excitatory blow-up) Top: Equation parameters \(a(N(t))\equiv1\), \(b=3\) with Gaussian initial condition \(v_0=-1\), \(\sigma_0^2=0.5\). Top left: evolution of firing rate \(N(t)\). Top right: density at \(t=2.95,3.15,3.35\). Bottom: Equation parameters \(a(N(t))\equiv1\), \(b=1.5\) with Gaussian initial condition \(v_0=1.5\), \(\sigma_0^2=0.005\). Bottom left: evolution of firing rate \(N(t)\). Bottom right: density at \(t=0.0325,0.0365,0.0405\). }
\label{blowup}
\end{figure}

2) Stationary solutions

As shown in \cite{CCP2011}, there may be zero, one or two stationary solutions for the system. In our tests, we focus on several stationary solutions scenarios and test the stability of the stationary solutions.

For example, when \(a(N(t))\equiv1\) and \(b=1.5\), we can find two different steady states satisfying Equation \eqref{eq:ss} whose firing rates are \(N^{\infty}=2.319\) and \(N^{\infty}=0.1924\). Let the two steady states be the initial condition (stationary initial condition see Equation \eqref{ICstationary}), we study the evolution of density functions as time goes by in Figure \ref{stationarysolution}, from which we can see that the stationary solution with firing rate \(N^{\infty}=2.319\) is unstable while the steady state with firing rate \(N^{\infty}=0.1924\) is stable. The former doesn't change much in a short time but converts to the stable stationary state later.

\begin{figure}
\centering
\includegraphics[width=0.48\textwidth]{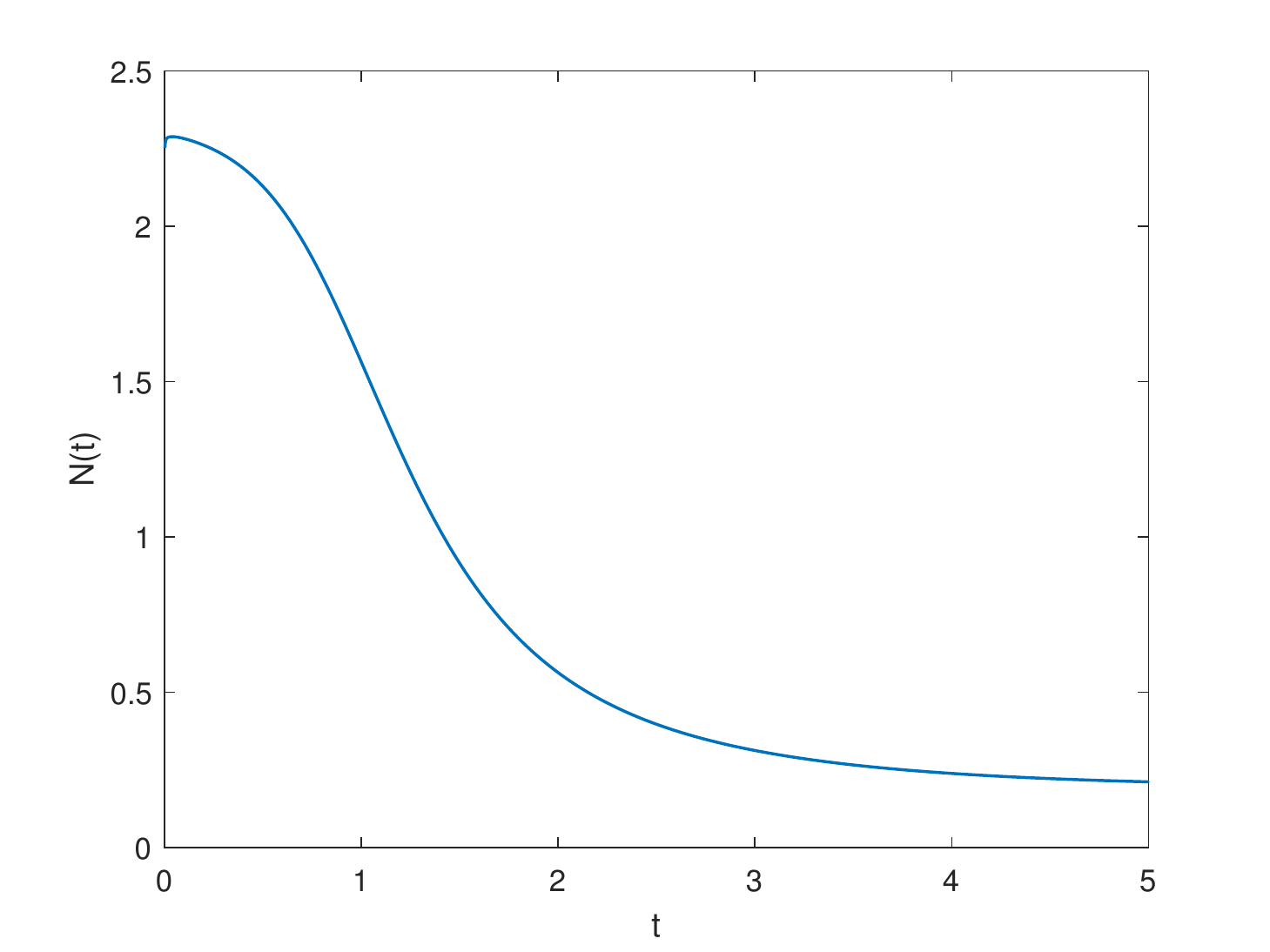}
\includegraphics[width=0.48\textwidth]{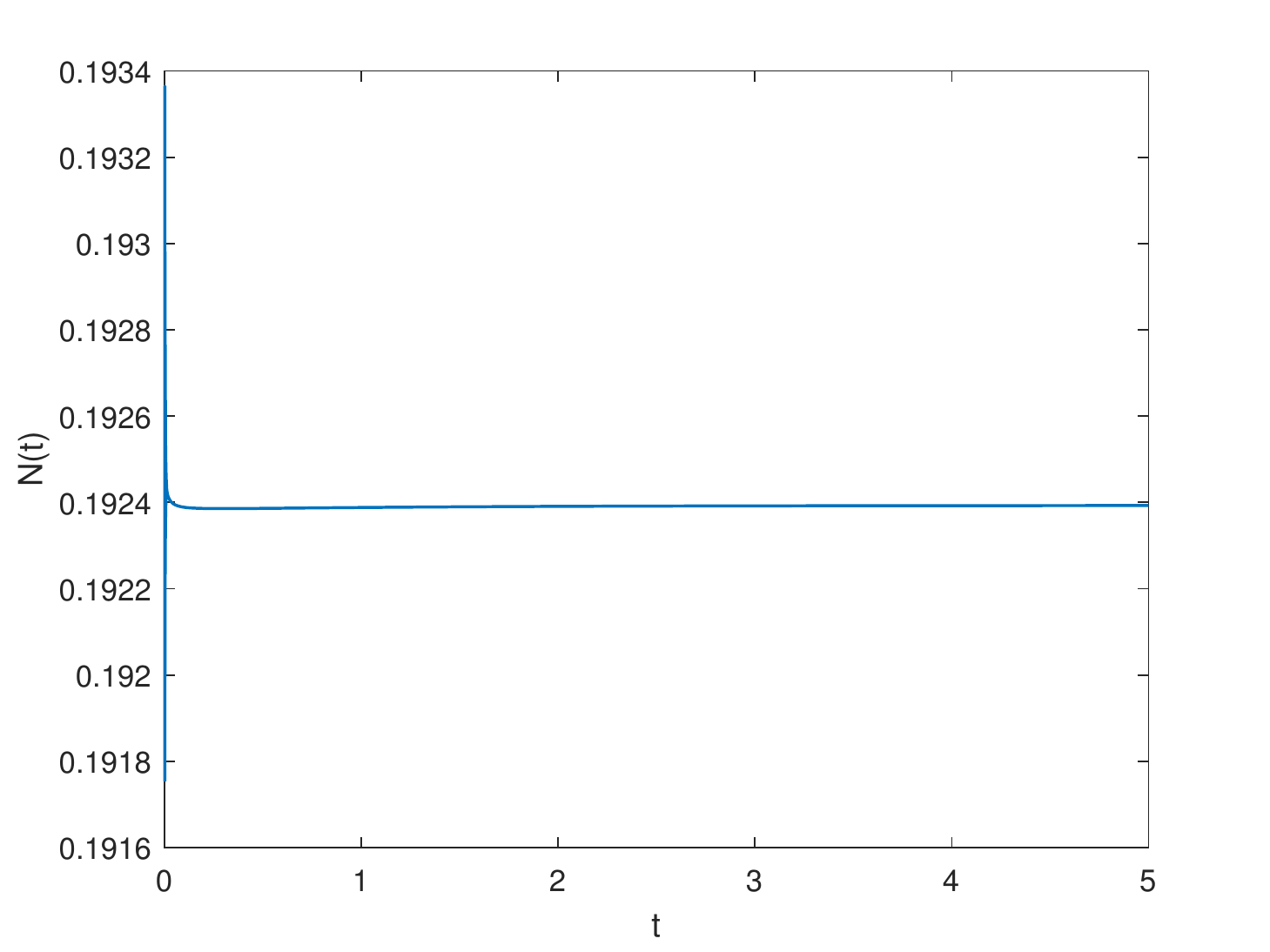}
\includegraphics[width=0.48\textwidth]{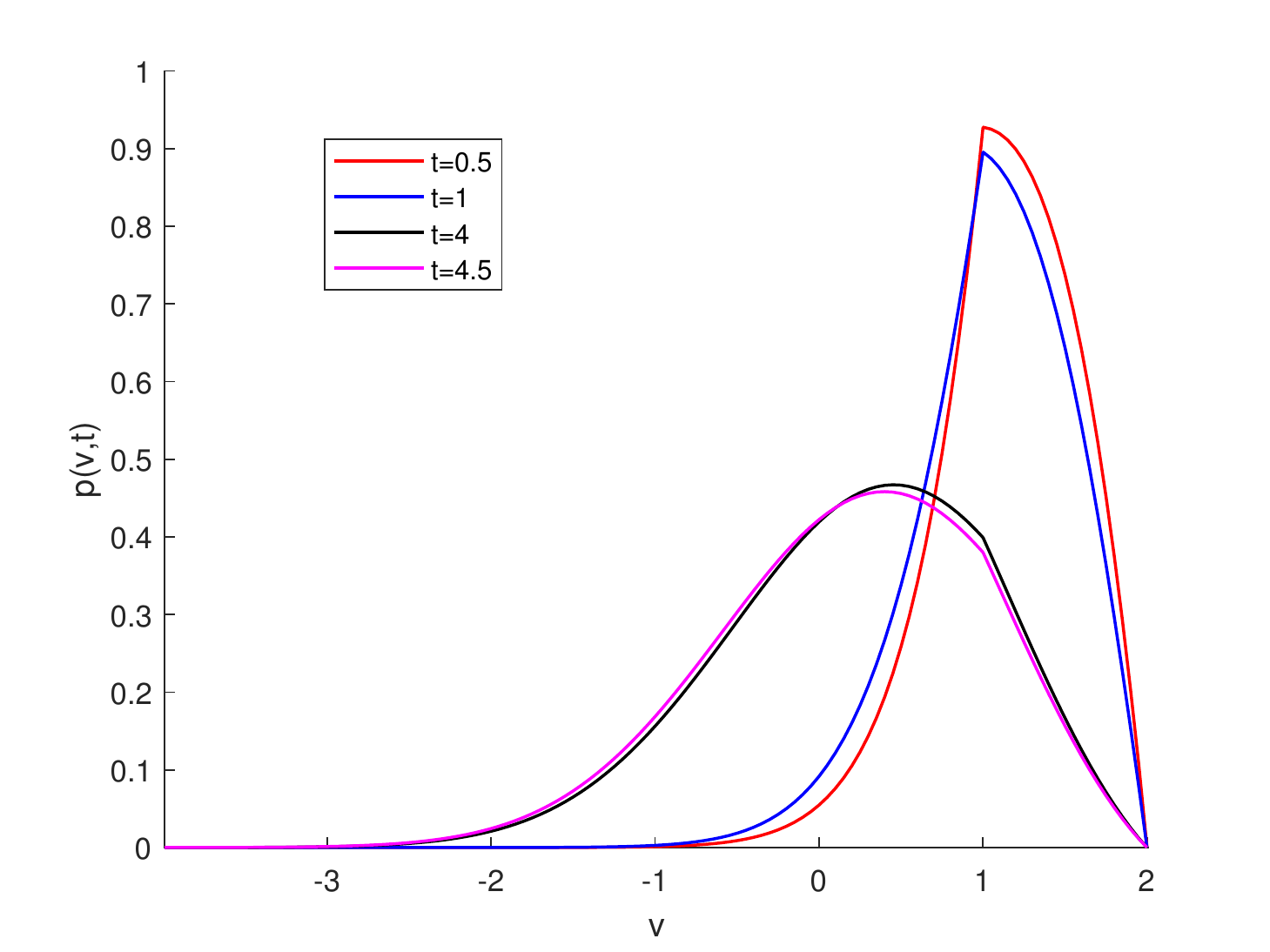}
\includegraphics[width=0.48\textwidth]{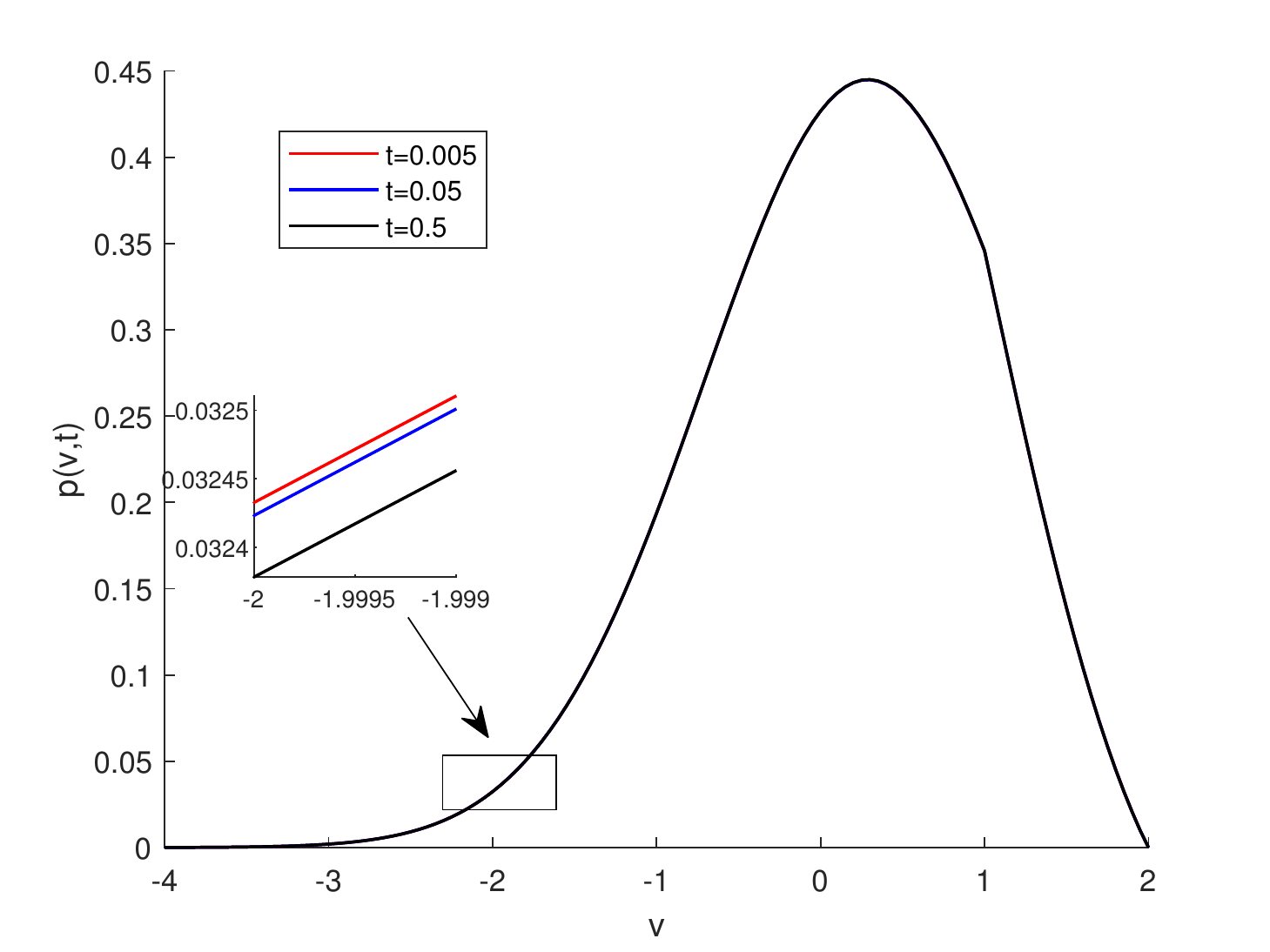}
\caption{{(Stationary solutions) Equation parameters \(a \equiv1\), \(b=1.5\). In this case we find two different stationary solutions, thus we can choose initial conditions as stationary states (see equation \eqref{ICstationary}) with different firing rates. Top left: evolution of the firing rate \(N(t)\) for stationary state initial condition with  \(N^{\infty}_1=2.319\). Top right: evolution of the firing rate \(N(t)\) for stationary state initial condition \(N^{\infty}_2=0.1924\). Bottom left: density for initial condition \(N^{\infty}_1=2.319\) at \(t=0.5,1,4.4.5\). Bottom right: density for initial condition \(N^{\infty}_2=0.1924\) at \(t=0.005,0.05,0.5\).}}
\label{stationarysolution}
\end{figure}

3) Relative entropy

Now we verify Theorem \ref{thm:nre} in Section 3.2 through numerical tests. Note that the value of the stationary solution we use to evaluate the relative entropy are from the exact expression given by Equation \eqref{stationary} rather than \(p_i^\infty\) solved by Equation \eqref{nss}.

First, we consider a linear case with \(a(N(t))=1\) and \(b=0\). It has a unique stationary solution with firing rate \(N^{\infty}=0.1377\) (for uniqueness proof see \cite{CCP2011}). Figure \ref{REforlinear} shows the time evolution of the firing rate and relative entropy for this case.

\begin{figure}
\centering
\includegraphics[width=0.48\textwidth]{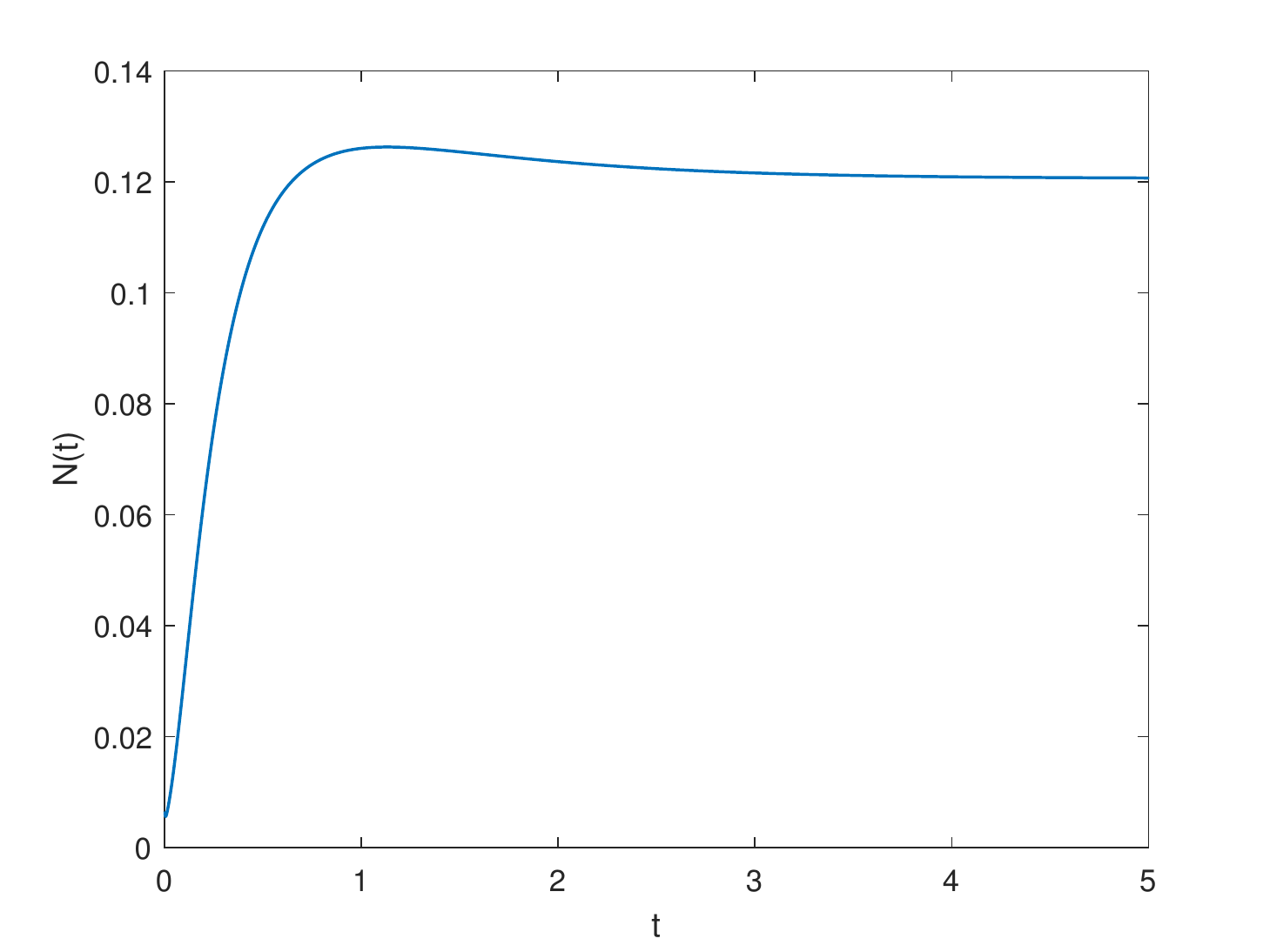}
\includegraphics[width=0.48\textwidth]{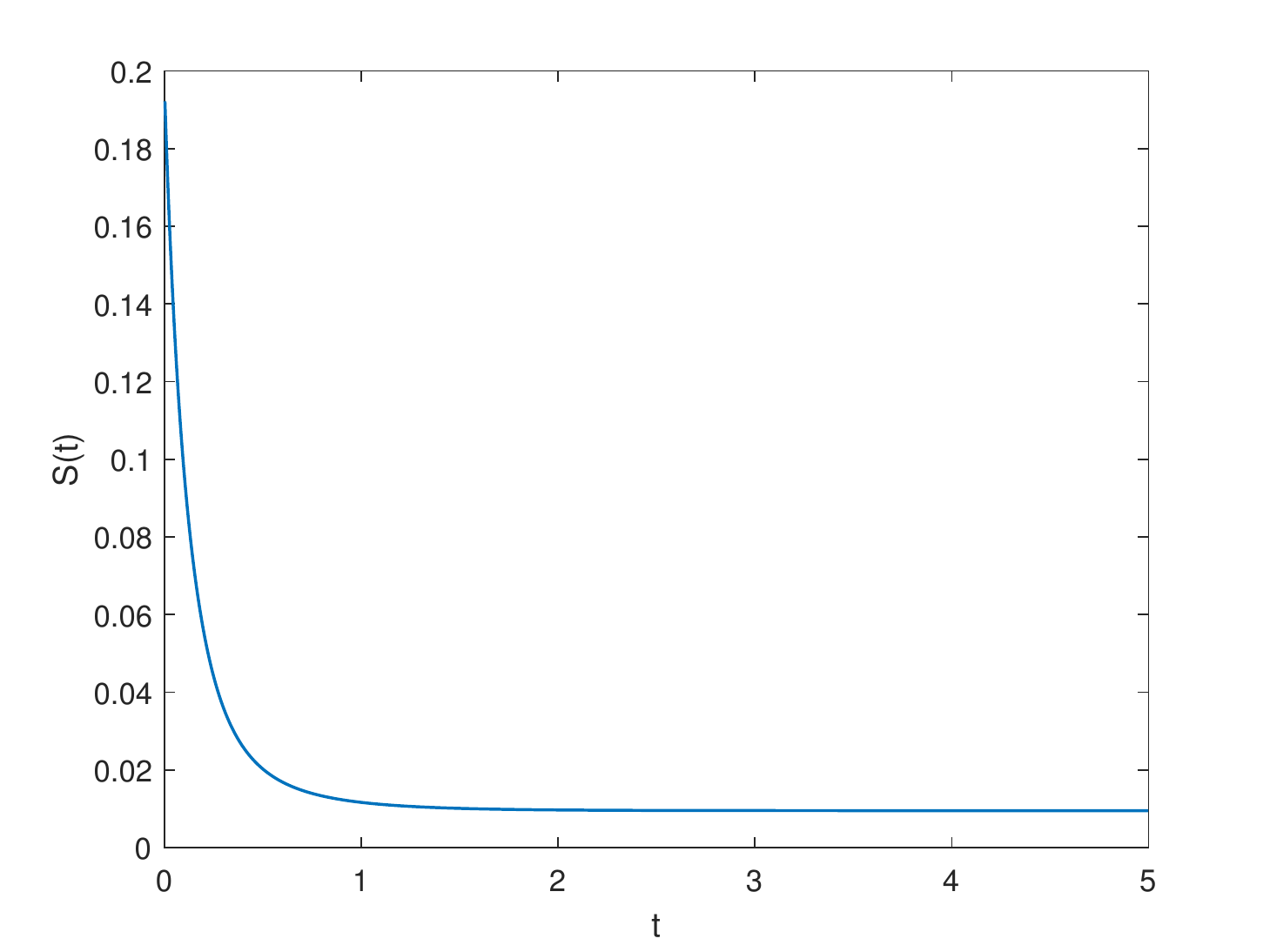}
\caption{(Decay of relative entropy for the linear case) Equation parameters  \(a\equiv1\), \(b=0\) with Gaussian initial condition \(v_0=0\), \(\sigma_0^2=0.25\). In this case we find a unique stationary solution with firing rate \(N^{\infty}=0.1377\). Left: firing rate \(N(t)\). Right: relative entropy \(S(t)\) with \(G(x)=\frac{(x-1)^2}{2}\).}
\label{REforlinear}
\end{figure}

\begin{figure}
\centering
\includegraphics[width=0.48\textwidth]{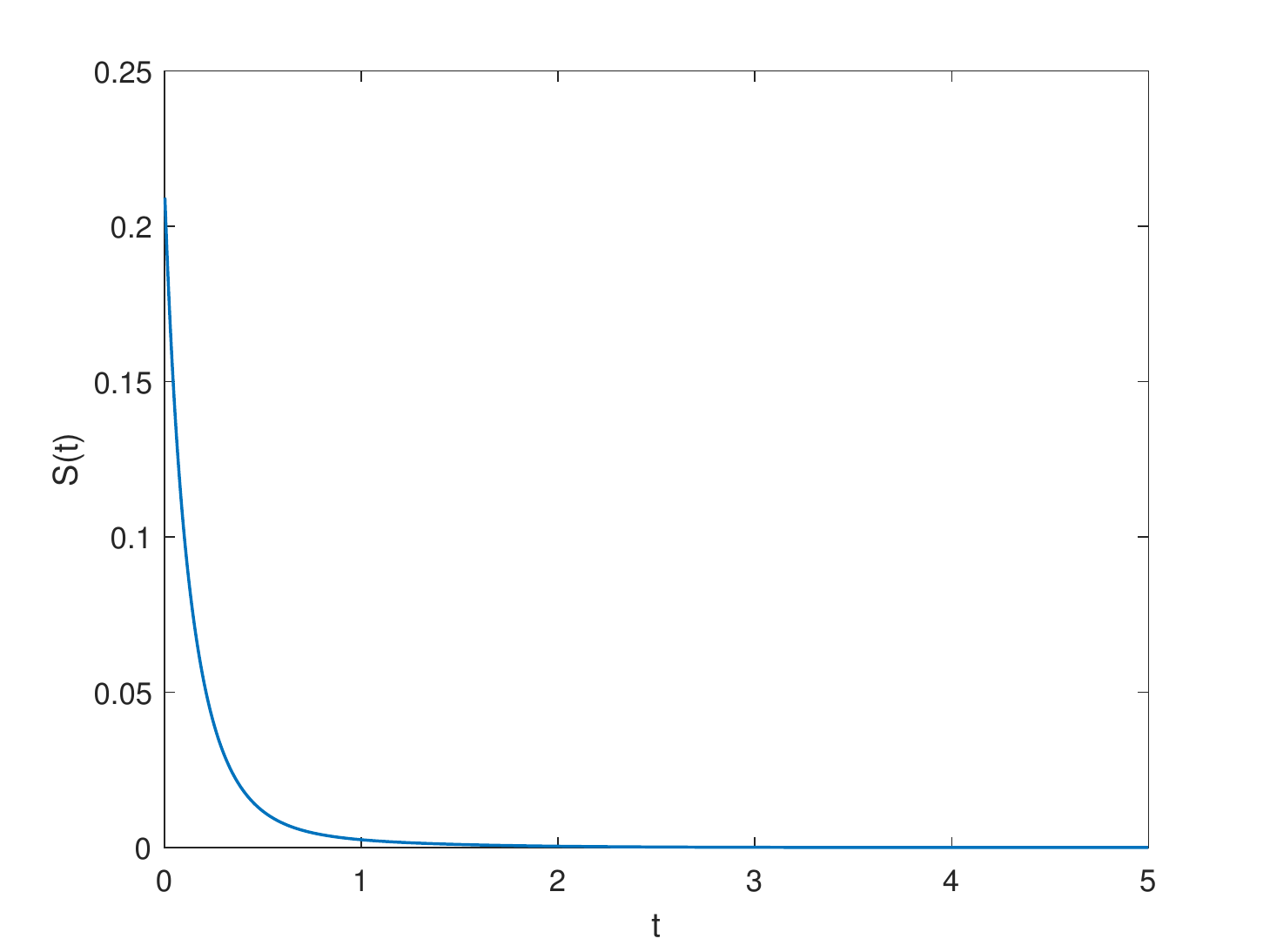}
\includegraphics[width=0.48\textwidth]{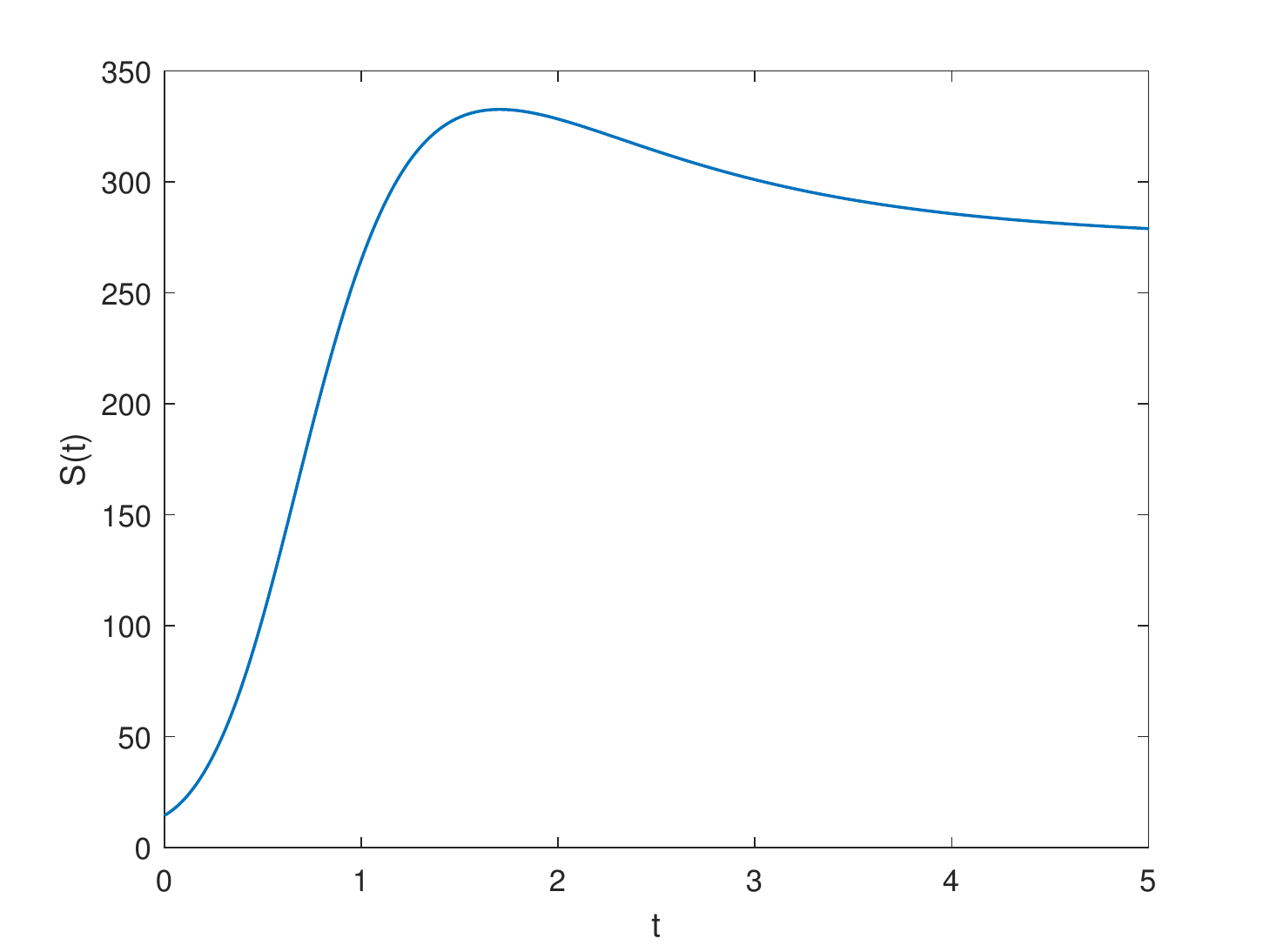}
\caption{(Evolution of relative entropy for the two stationary states case) Equation parameters  \(a(N(t))\equiv1\), \(b=1.5\) with Gaussian initial condition \(v_0=0\), \(\sigma_0^2=0.25\). In this case we find two stationary states with firing rate \( N^{\infty}=0.1924\) and \( N^{\infty}=2.319\), thus we can define two relative entropy according to the two stationary solutions (see Figure \ref{stationarysolution} for full discussion). Left: relative entropy \(S(t)\) according to stable stationary state with \( N^{\infty}=0.1924\) with \(G(x)=\frac{(x-1)^2}{2}\). Right: relative entropy \(S(t)\) according to unstable stationary state with \( N^{\infty}=2.319\) with \(G(x)=\frac{(x-1)^2}{2}\). }
\label{REfornonlinear}
\end{figure}

Then we move to excitatory system case when \(b>0\). We consider the same example as in Figure \ref{stationarysolution}, where we see two stationary solutions. We can write relative entropy according to each stationary solution. The results are shown in Figure \ref{REfornonlinear}, where the density function converges to the stable stationary state with \(N^{\infty}=0.1915\). We see the the relative entropy according to this stable state decreases, while the other one doesn't.

Finally we consider a case when \(a_1\neq0\). We choose \(a_0=1\), \(a_1=0.1\) and \(b=0\) and we also choose the Gaussian function as the initial condition. We {find} a stationary solution with firing rate \(N^{\infty}=0.1420\). The relative entropy still decreases as shown in Figure \ref{REfora1neq0}.

{\begin{remark}
We remark that the decreasing relative entropy case in Figure \ref{REfornonlinear} doesn't indicate invalidity of Theorem \ref{thm:nre}. Since we see the numerical solutions converge to the stable stationary solution with \( N^{\infty}=0.1924\) in Figure \ref{stationarysolution} rather than the unstable one, we do not expect the relative entropy with respect to the unstable state decrease. 
\end{remark}}

\begin{figure}
\centering
\includegraphics[width=0.48\textwidth]{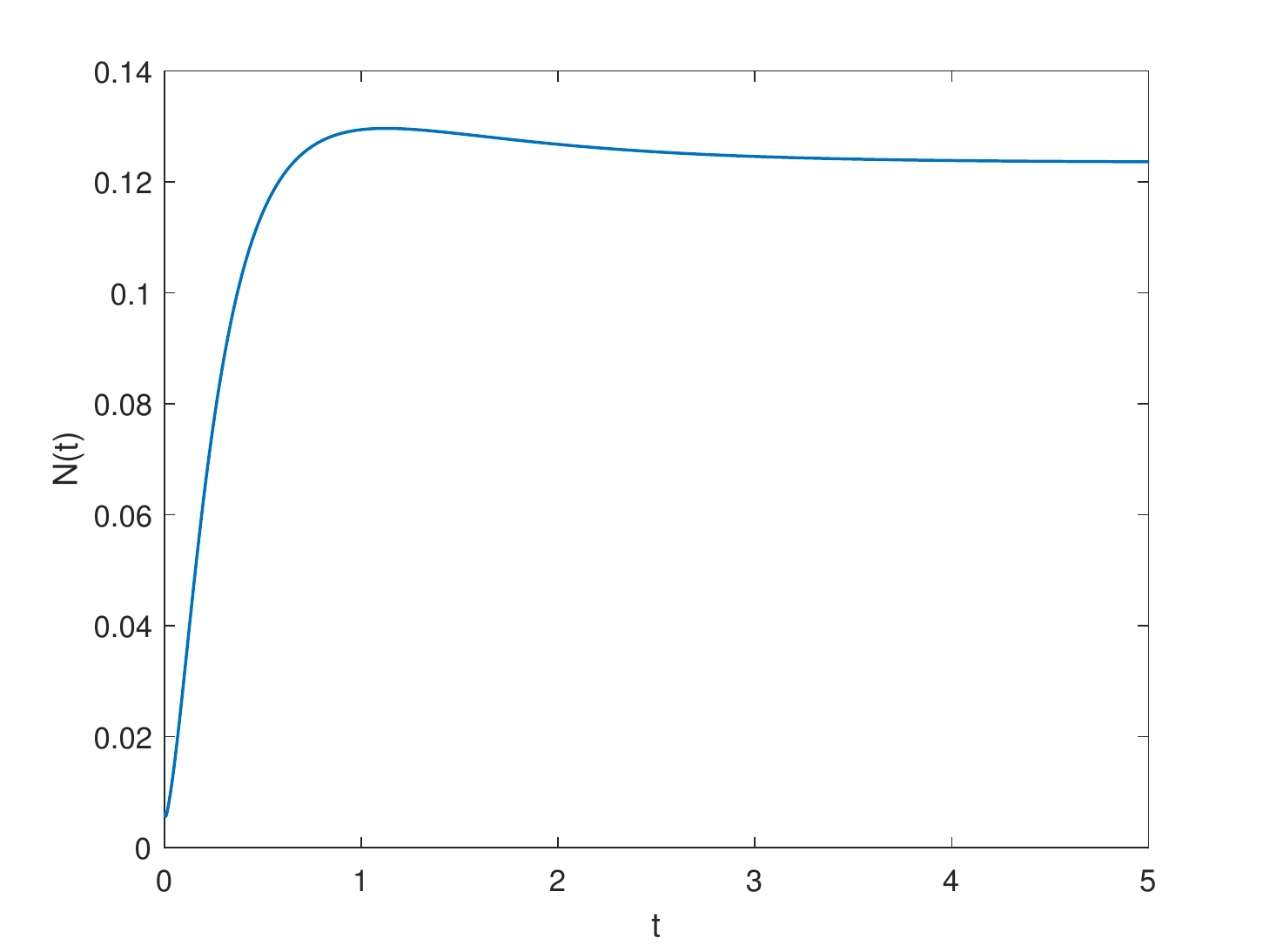}
\includegraphics[width=0.48\textwidth]{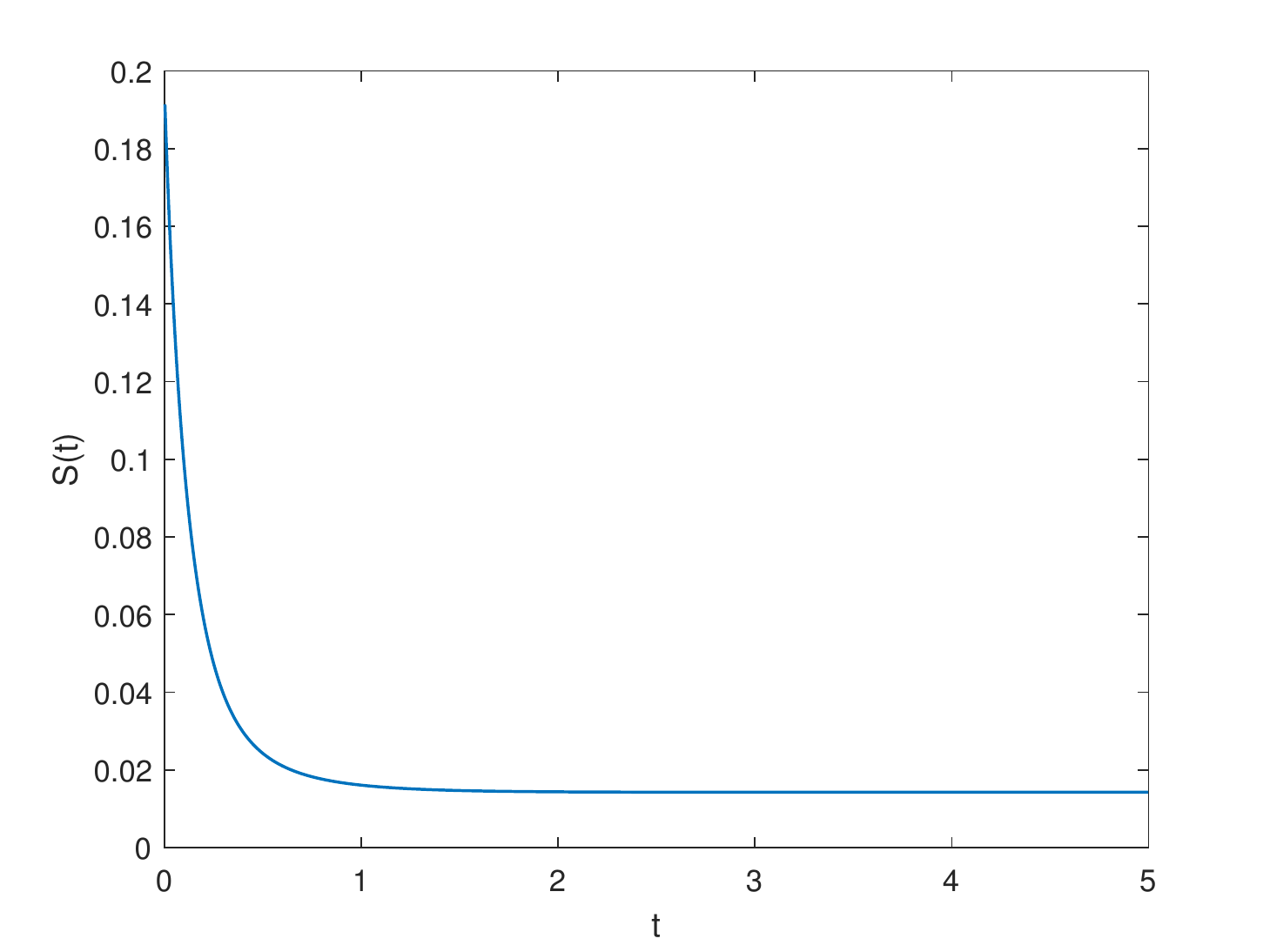}
\caption{(Decay of relative entropy for nonlinear case when \(a_1\neq0\)) Equation parameters \(a_0=1\),  \(a_1=0.1\), \(b=0\) with Gaussian initial condition \(v_0=0\), \(\sigma_0^2=0.25\). We find a stationary solution with firing rate \(N^{\infty}=0.1420\). Left: firing rate \(N(t)\). Right: relative entropy \(S(t)\)  with \(G(x)=\frac{(x-1)^2}{2}\).}
\label{REfora1neq0}
\end{figure}

\newpage

\subsection{Modified NNLIF model and numerical studies}
\label{modify}

We consider the Fokker-Planck model involving the transmission delay and the refractory state (introduced in \cite{CS2018}):
\begin{equation} \label{meq:fk}
\begin{cases}
\partial_t p+\partial_v ( h(v,N(t-D)) p) - a(N(t-D)) \partial_{vv}p(v,t)
=0, \quad v\in(-\infty,V_F)/\{V_R\},  \\
N(t)=-a(N(t-D))\partial_v p(V_F,t),\\
\frac{d}{dt}R(t)=N(t)-\frac{R(t)}{\gamma},\\
p(v,0)=p_0(v),\qquad p(-\infty,t)=p(V_F,t)=0, \\
p(V_R^-,t)=p(V_R^+,t),\qquad a(N(t-D))\frac{\partial}{\partial v}p(V_R^-,t)=a(N(t-D))\frac{\partial}{\partial v} p(V_R^-,t)+\frac{R(t)}{\gamma},
\end{cases}
\end{equation}
where \(D\) indicates the time of transmission delay for the firing rate, \(R(t)\) indicates the proportion of neuron in the refractory state at time \(t\) with refractory period \(\gamma\) and \(N(t)\) denotes the  mean firing rate. Compared to Equation \eqref{eq:fk}, the system \eqref{meq:fk} is a PDE of the density function \(p(v,t)\) with an ODE involved with refractory state \(R(t)\). Here we consider
\begin{equation} \label{meq:parameter}
h(v,N(t))=-v+bN(t)+v_{\textrm{ext}}, \quad a(N(t))=a_0+a_1N(t),
\end{equation}
where \(v_{\textrm{ext}}\) describes the external synapses. In Equation \eqref{meq:fk}, \(v_{\textrm{ext}}\) carries out a drift in the flux.

Note that the initial condition of Equation \eqref{meq:fk} is chosen as
\[
\int_{-\infty}^{V_F}p(v,0)dv+R(0)=1.
\]
And it is easy to check
\[
\int_{-\infty}^{V_F}p(v,t)dv+R(t)=1,
\]
for any \(t>0\).

We introduce a fully-discrete finite difference scheme for  system \eqref{meq:fk}, which is natural extension of the semi-implicit scheme  proposed in section 2. The way we define the numerical flux for the PDE  in \eqref{meq:fk} follows the same semi-implicit scheme, which is straightforward. To discretize the ODE of $R(t)$, we choose the forward Euler scheme:
\begin{equation}\label{ODEsolver}
\frac{R_h^{m+1}-R_h^m}{\tau}=N_h^m-\frac{R_h^m}{\gamma}.
\end{equation}

The reason of using  the explicit scheme for the ODE is because it naturally preserves the total mass when we choose \(F_{n-\frac{1}{2}}^m=N^m\). In other words,
\[
\left(h\sum_{i=1}^np_i^{m+1}+R_h^{m+1}\right)-\left(h\sum_{i=1}^np_i^m+R_h^m\right)=0.
\]
If we consider the backward Euler scheme for $R(t)$ as
\[
\frac{R_h^{m+1}-R_h^m}{\tau}=N_h^{m+1}-\frac{R_h^{m+1}}{\gamma},
\]
it does not preserve the discrete mass because
\[
\left(h\sum_{i=1}^np_i^{m+1}+R_h^{m+1}\right)-\left(h\sum_{i=1}^np_i^m+R_h^m\right)=\frac{\tau}{\gamma}(R_h^m-R_h^{m+1})+\tau(N_h^{m+1}-N_h^m).
\]
More sophisticated mass-preserving time discretization of the refractory state  will be considered in future work.


As shown in \cite{CS2018}, oscillatory solutions appears to exist when initial conditions concentrate around \(V_F\) or \(v_{\textrm{ext}}\) is large. So we change our computational domain from \([-4,2]\) to \([0,2]\) in this subsection to show phenomenon near \(V_F\) better.

First we consider inhibitory system  (\(b<0\)). The figures in the left column of Figure \ref{fig:inhibitosc} shows the periodic solution when \(v_{\textrm{ext}}\) is large. In fact, large external synapses \(v_{\textrm{ext}}\) also results in density function concentrating on \(V_F\). We also find that when \(v_{\textrm{ext}}\) is not large enough, the oscillation solution may decay to stationary solution. 

The figures in the right column of in Figure \ref{fig:inhibitosc} seem to suggest that the transmission delay \(D\) also plays an important role for oscillatory solution. When \(D\) is too small, periodic solution may not exist. {When \(D\) is large, the frequency of periodic solution decreases.}

\begin{figure}
\centering
\includegraphics[width=0.48\textwidth]{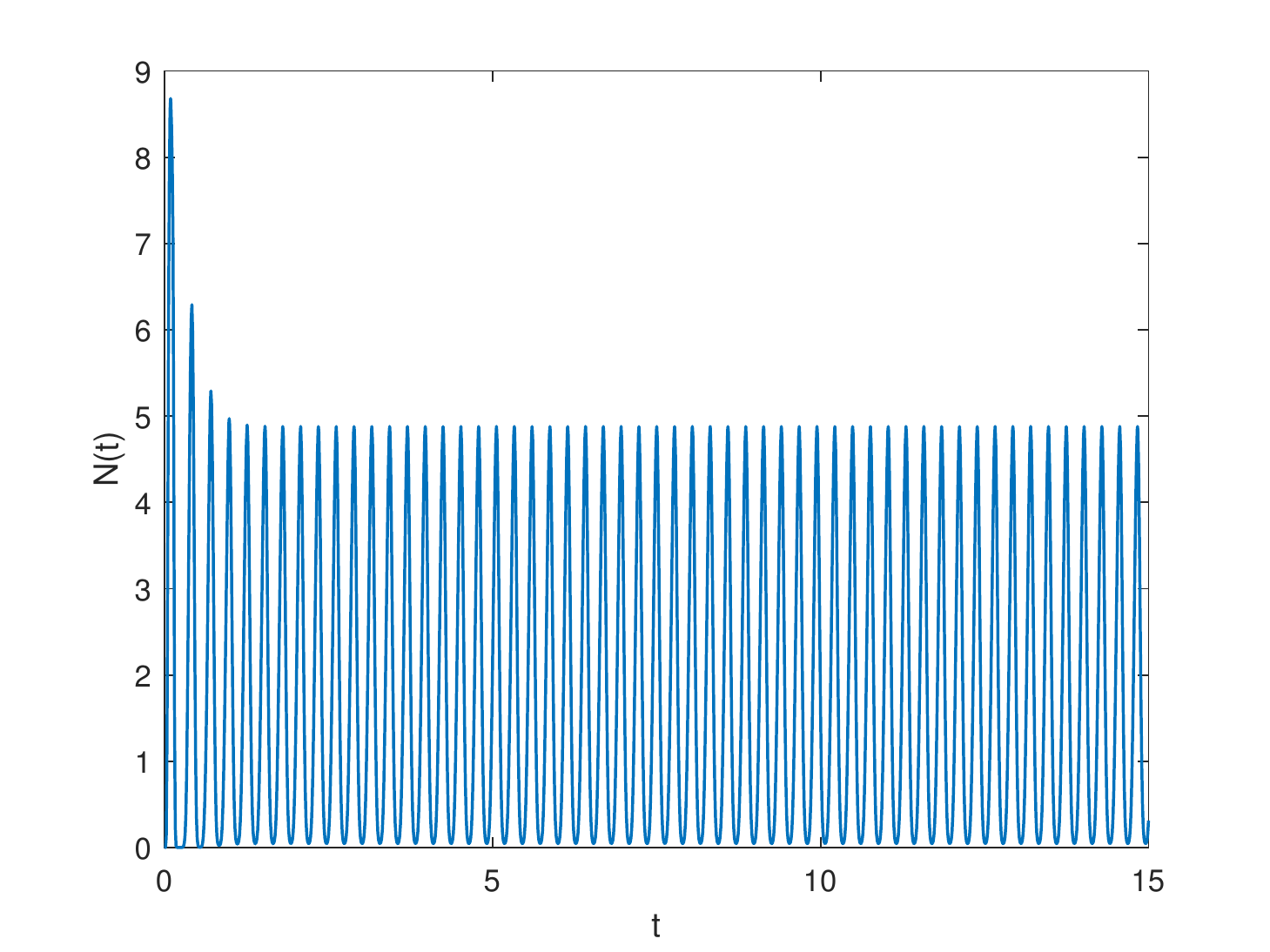}
\includegraphics[width=0.48\textwidth]{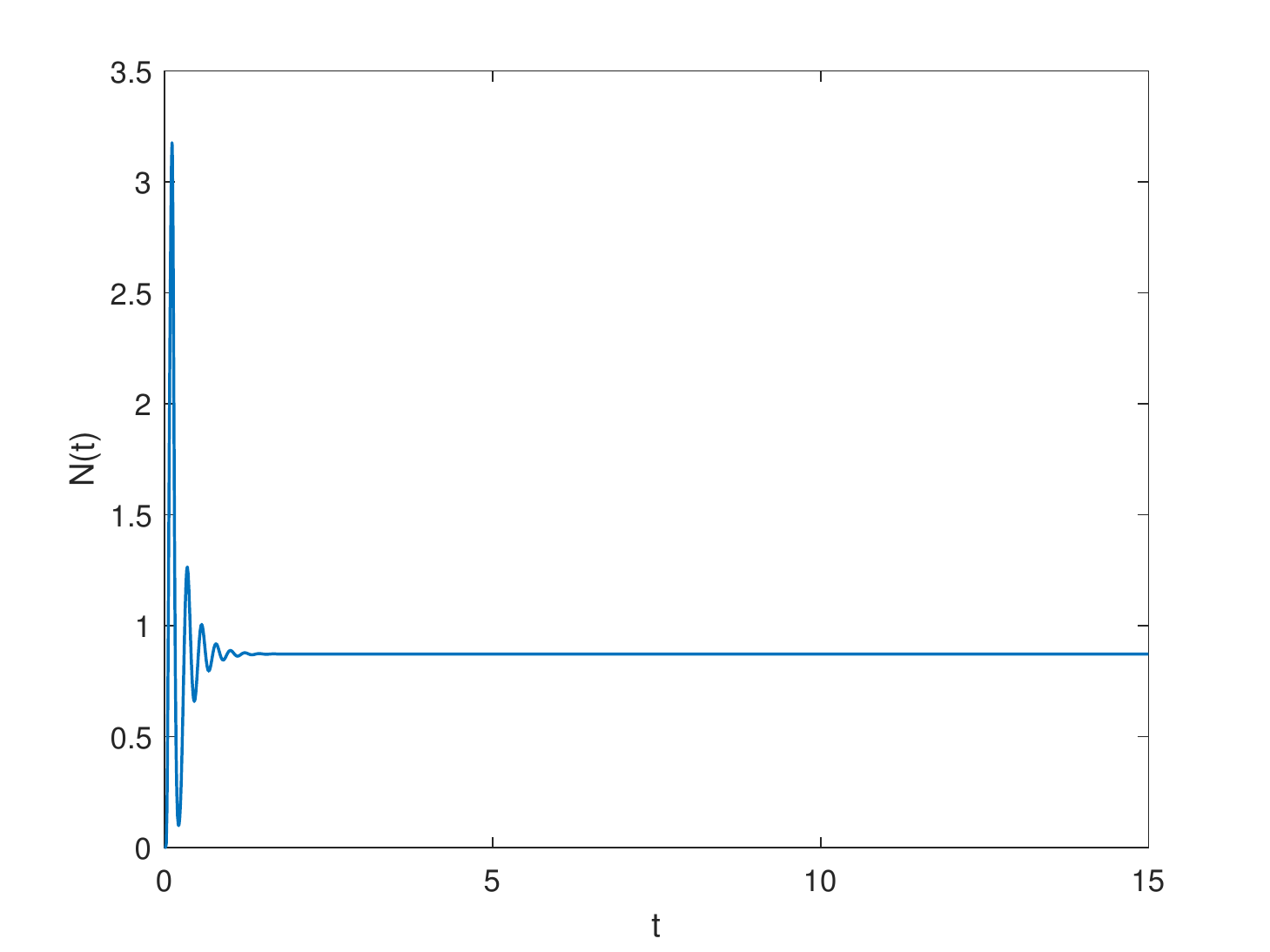}
\includegraphics[width=0.48\textwidth]{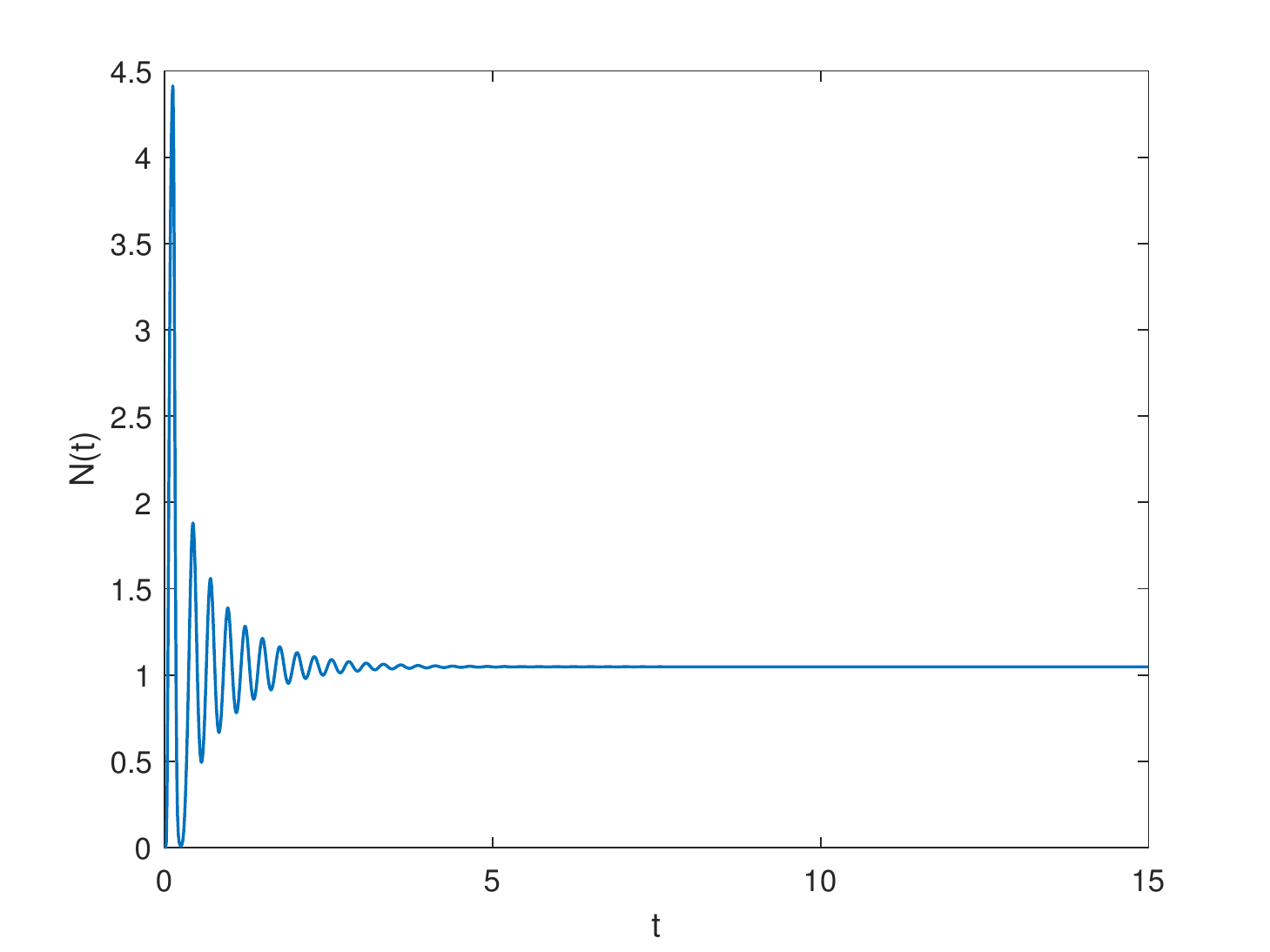}
\includegraphics[width=0.48\textwidth]{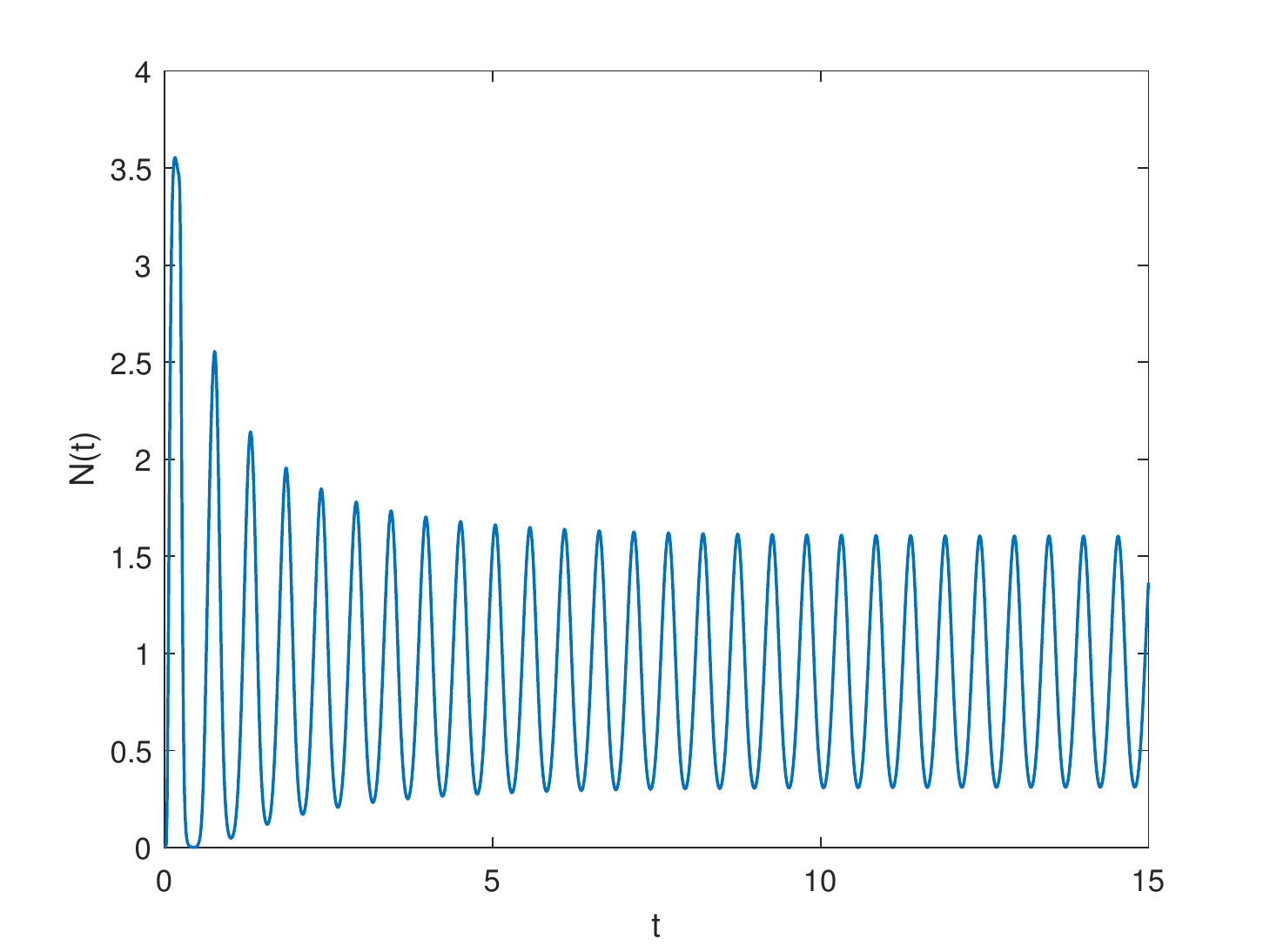}
\includegraphics[width=0.48\textwidth]{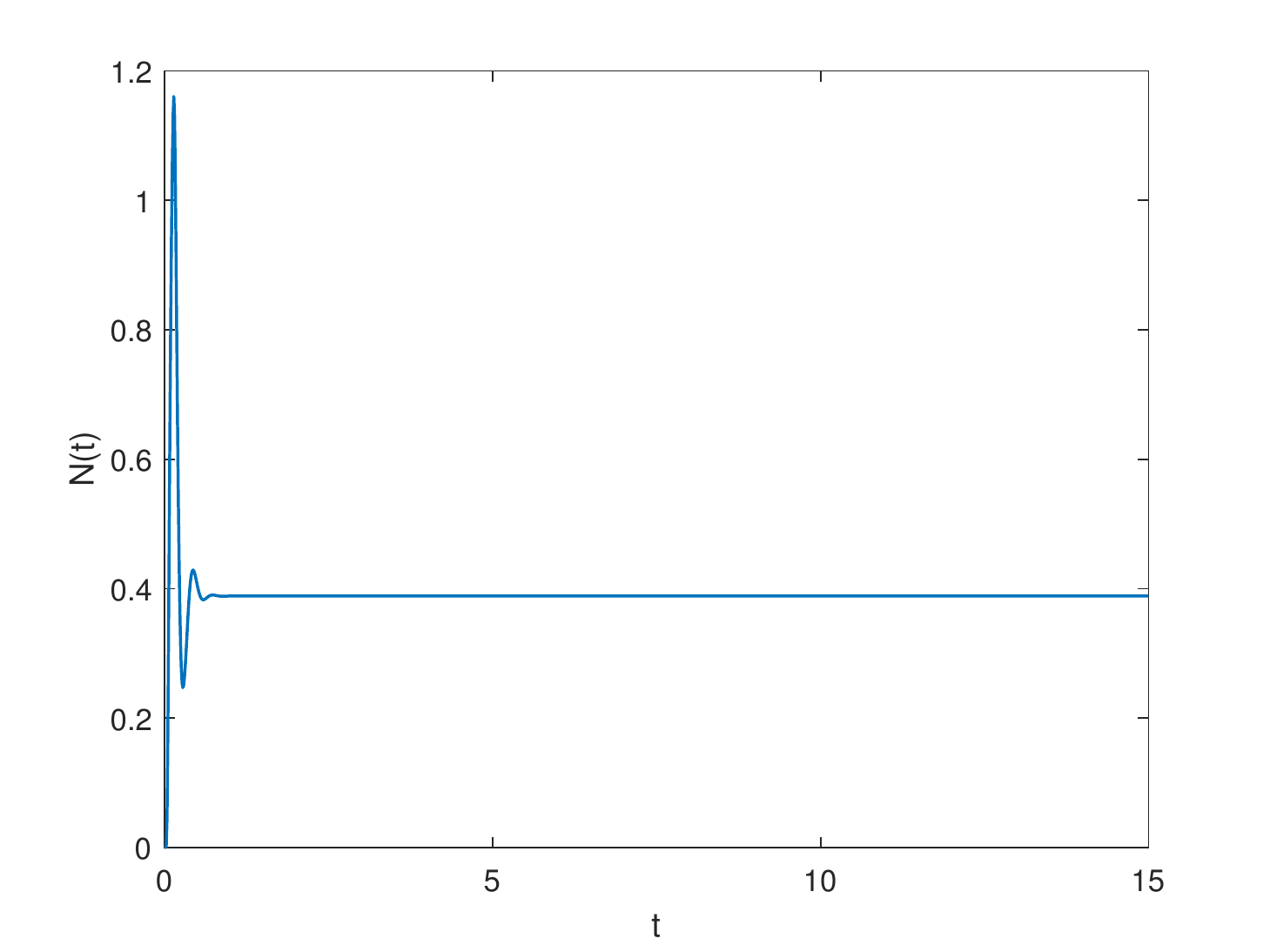}
\includegraphics[width=0.48\textwidth]{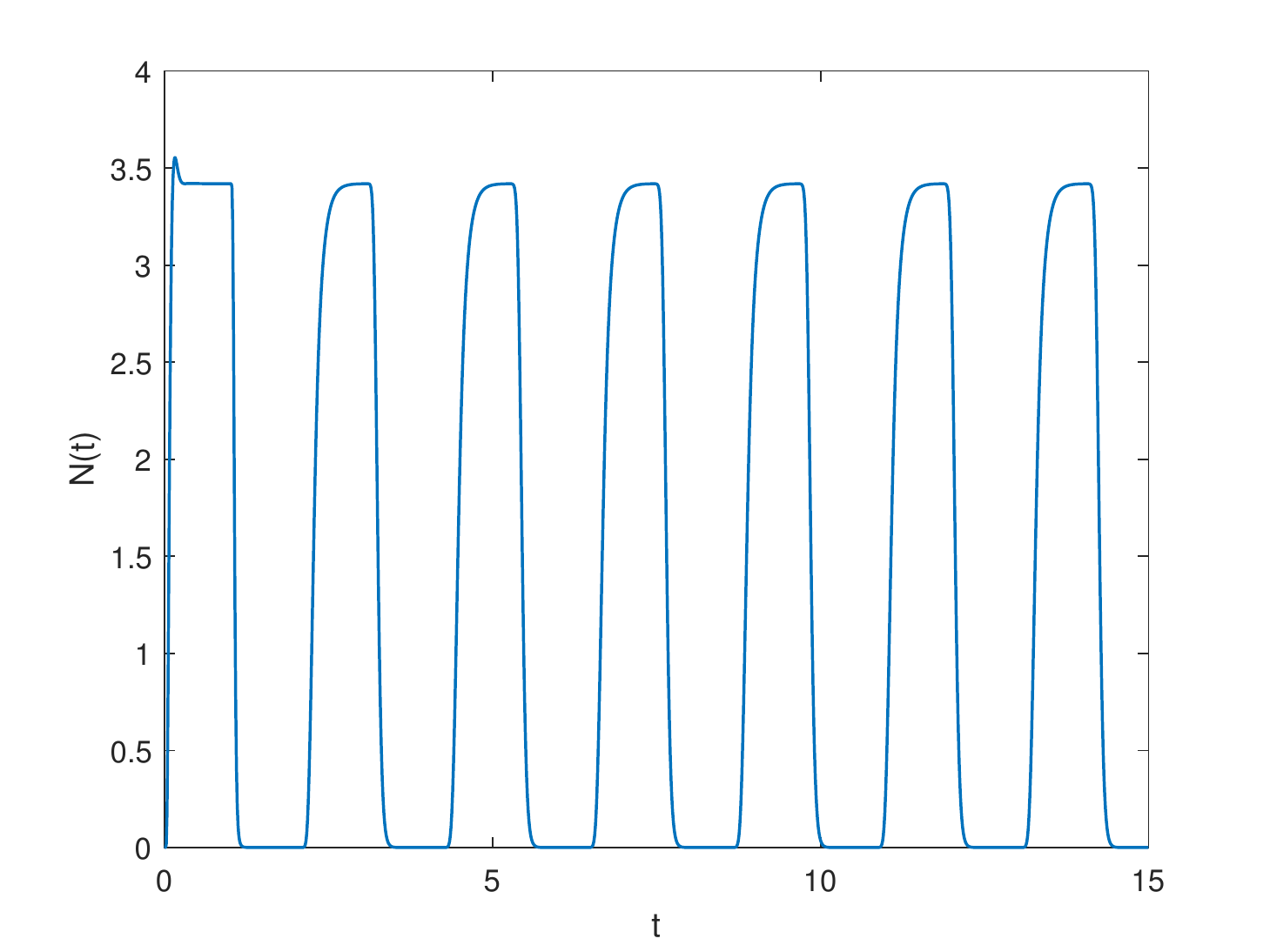}
\caption{{(Inhibitory periodic solution of model with transmission delay and refractory state) Equation parameters: \(a(N(t))\equiv 1\) and \(b = -4\). Gaussian initial data: \(v_0=1\) and \(\sigma_0\)= 0.0003. Refractory states: \(\gamma = 0.025\) and \(R(0) = 0.2\). We choose the spatial size \(h=\frac{2}{60}\) and the temporal size \(\tau=2\times 10^{-3}\) for the tests. Top left:  Transmission delay \(D = 0.1\), external synapses \(v_{\textrm{ext}}=10\); Middle left:  Transmission delay \(D = 0.1\), external synapses \(v_{\textrm{ext}}=6\); Bottom left:  Transmission delay \(D = 0.1\), external synapses \(v_{\textrm{ext}}=2\); Top right: External synapses \(v_{\textrm{ext}}=5\), transmission delay \(D = 0.08\); Middle right: External synapses \(v_{\textrm{ext}}=5\), transmission delay \(D = 0.2\); Bottom right: External synapses \(v_{\textrm{ext}}=5\), transmission delay \(D = 1\). }}
\label{fig:inhibitosc}
\end{figure}

In addition, excitatory system when the connectivity parameter \(b>0\) also exhibits various oscillation phenomena, which are shown in Figure \ref{fig:excitatory1}. We can see oscillatory solution with stable or decreasing amplitude, some of which are consistent to numerical experiments in \cite{CS2018}.

\begin{figure}
\centering
\includegraphics[width=0.46\textwidth]{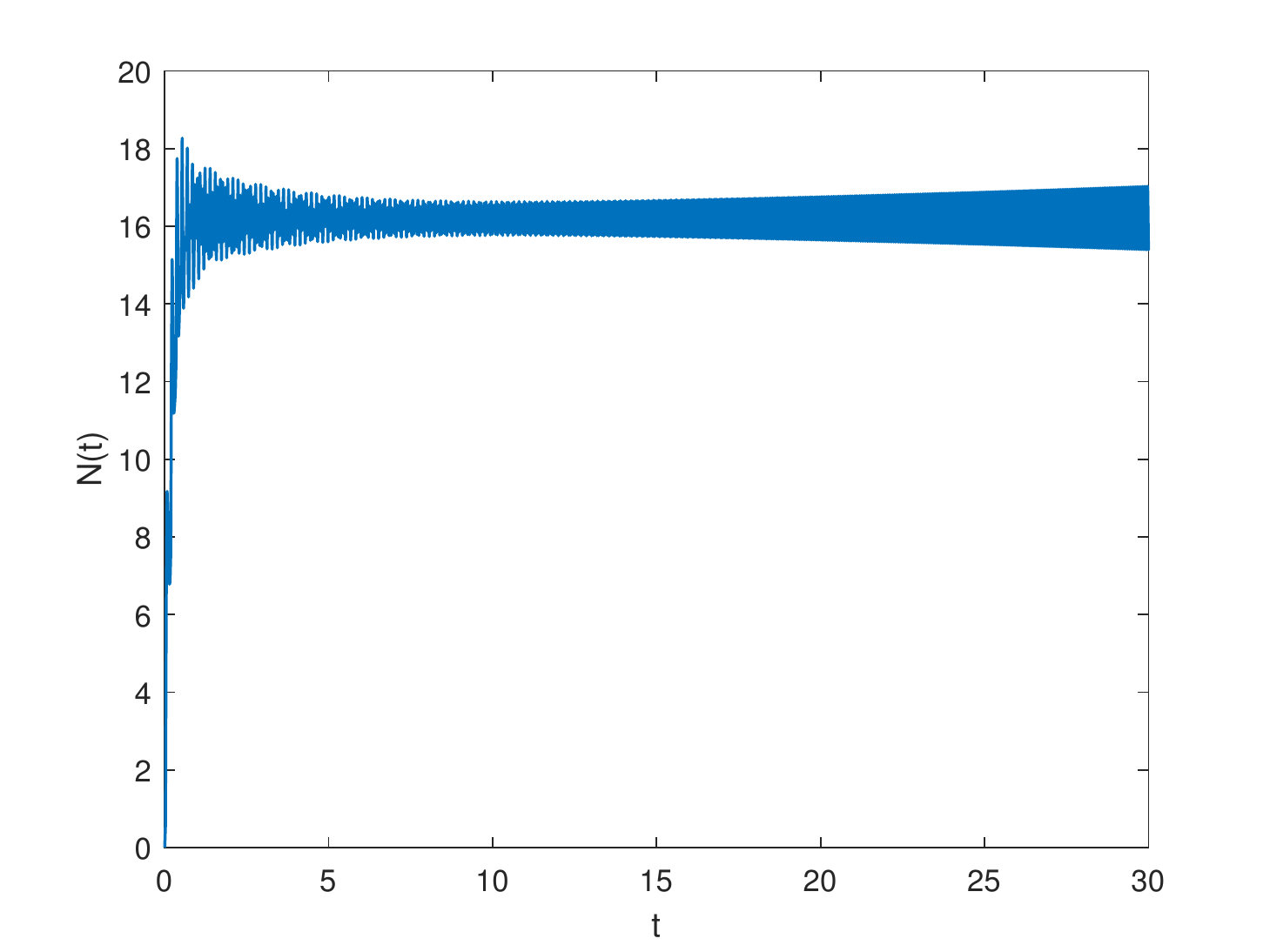}
\includegraphics[width=0.46\textwidth]{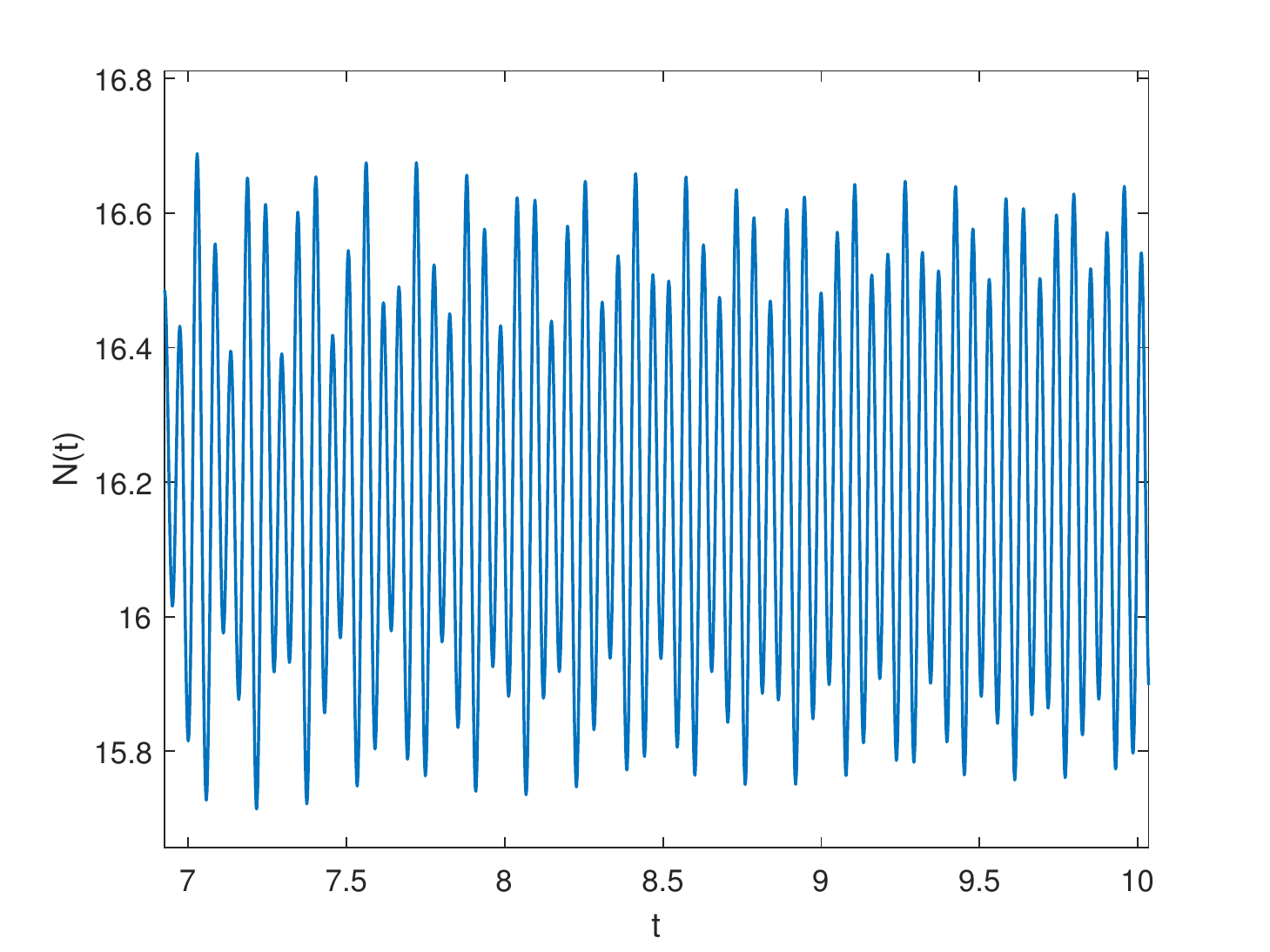}
\includegraphics[width=0.46\textwidth]{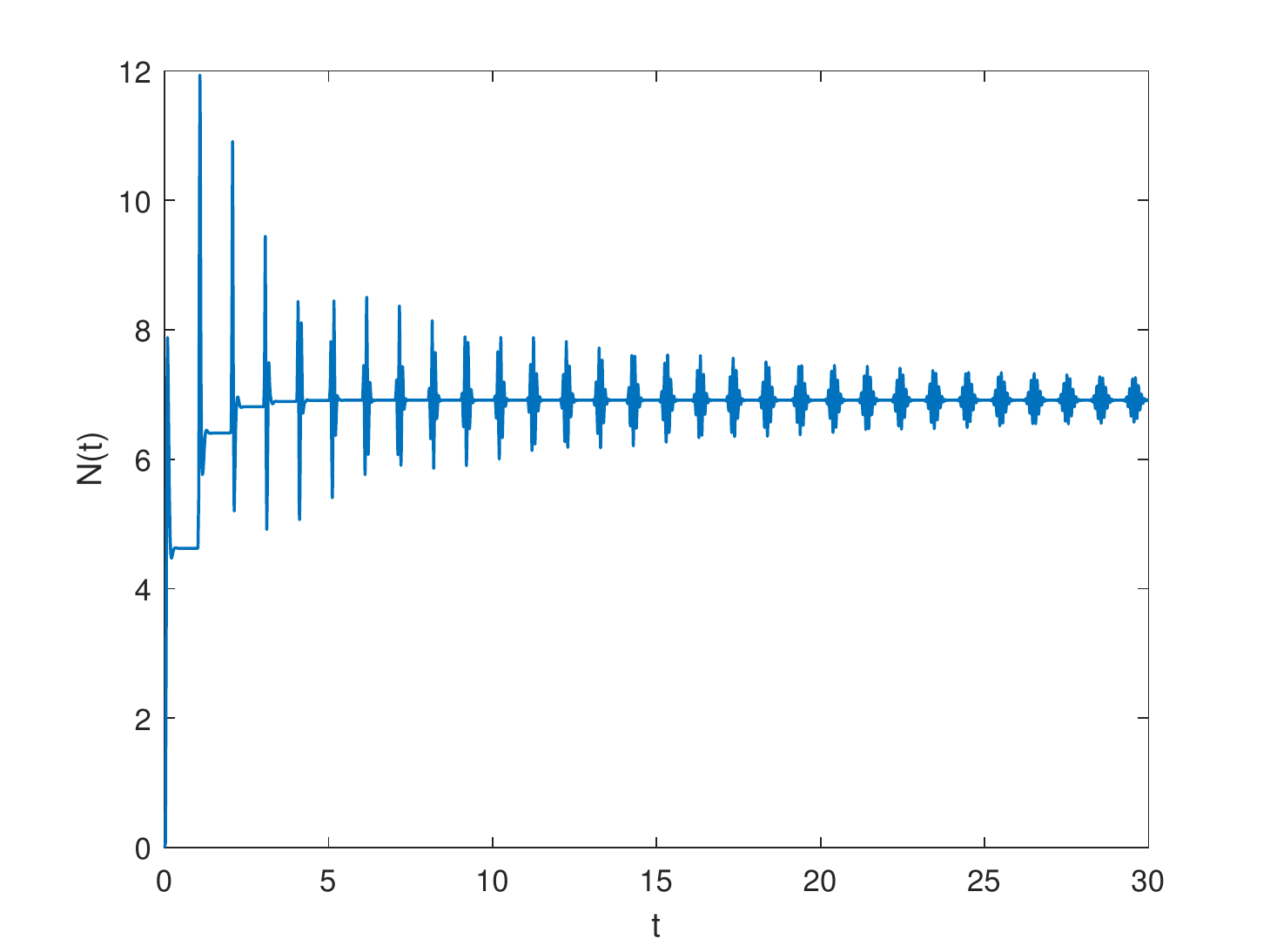}
\includegraphics[width=0.46\textwidth]{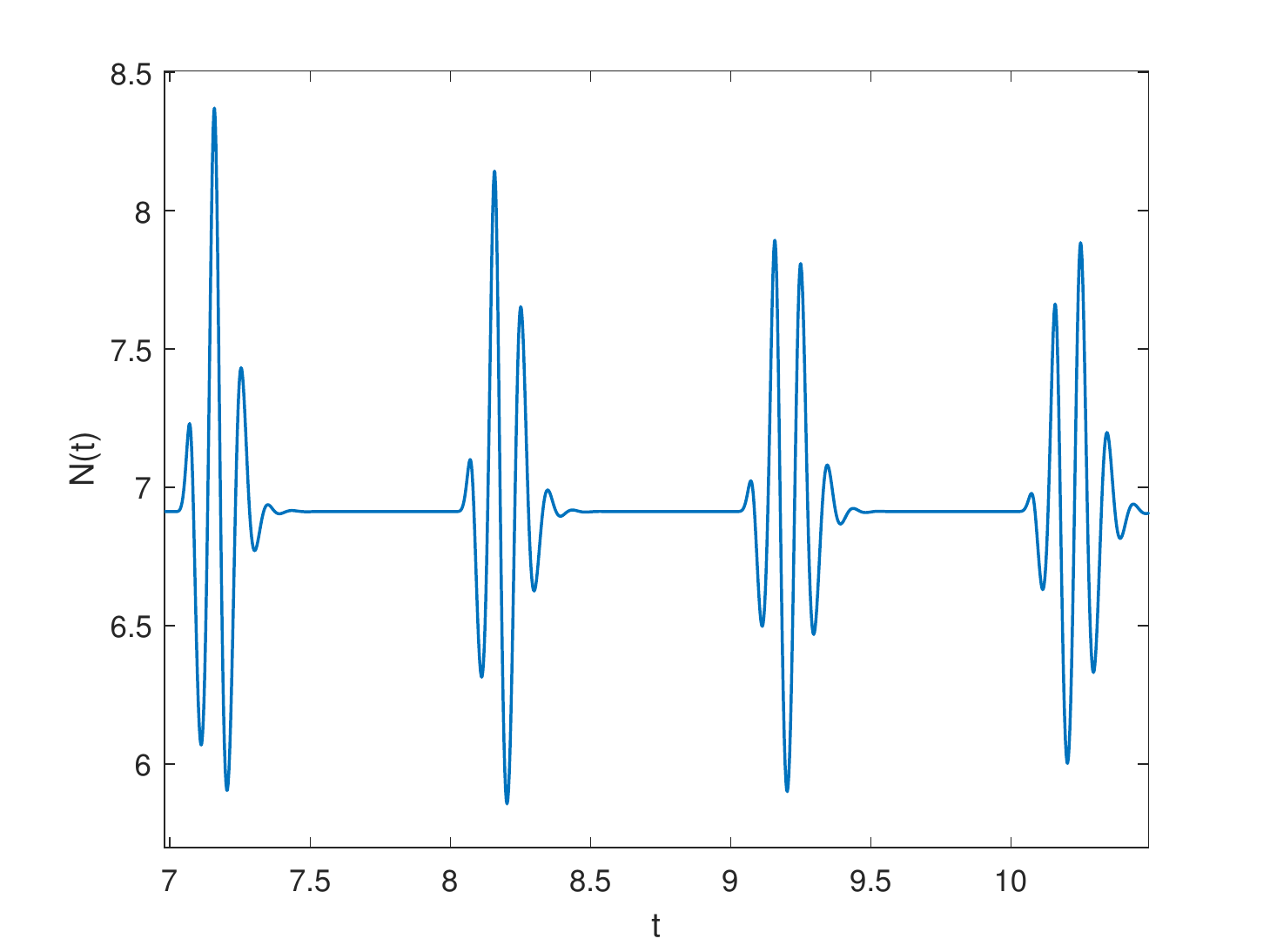}
\includegraphics[width=0.46\textwidth]{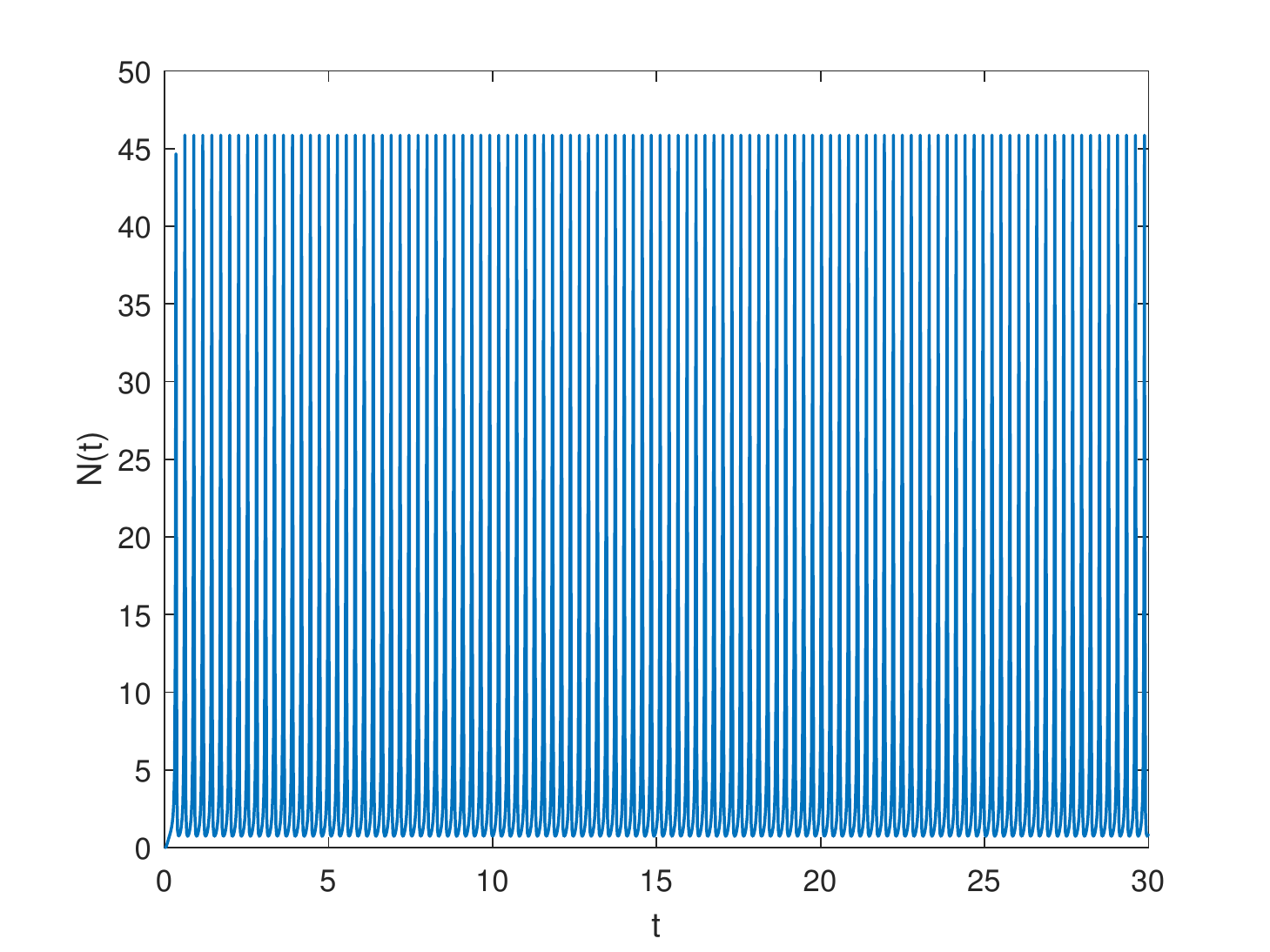}
\includegraphics[width=0.46\textwidth]{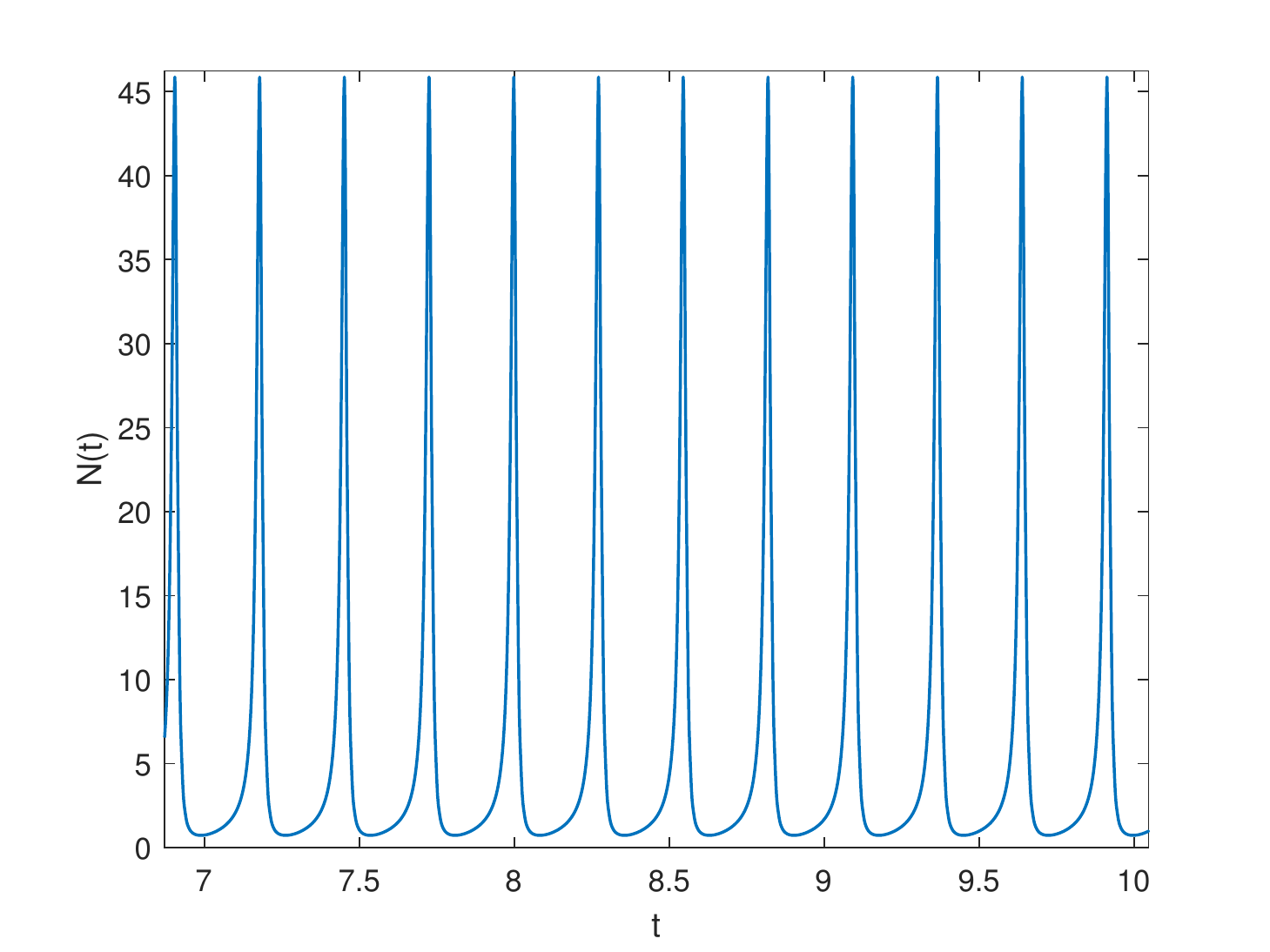}
\caption{{(Excitatory oscillatory solution) Equation parameters \(a(N(t)) \equiv 1\) and \(b>0\).  Gaussian initial condition: \(v_0=1\) and \(\sigma_0= 0.005\). We choose the spatial size \(h=\frac{2}{120}\) and the temporal size \(\tau=3\times 10^{-4}\). Top: transmission delay \(D=0.157\), connectivity parameter \(b=1.15\), external synapses \(v_{\textrm{ext}}=10\), refractory parameter \(\gamma = 0.025\), initial refractory state \(R(0)=0.2\). Top left: Bird view of the long-time firing rate evolution. Top right: A local sketch of the firing rate evolution. Middle: transmission delay \(D=1\), connectivity parameter \(b=2\), external synapses \(v_{\textrm{ext}}=10\), refractory parameter \(\gamma = 0.1\), initial refractory state \(R(0)=0.2\). Middle left: Bird view the of long-time firing rate evolution. Middle right: A local sketch of the firing rate evolution. Bottom: transmission delay \(D=0.01\), connectivity parameter \(b=3\),  external synapses \(v_{\textrm{ext}}=0\), refractory parameter \(\gamma = 0.06\), initial refractory state \(R(0)=0\). Bottom left: Bird view of the long-time firing rate evolution. Bottom right: A local sketch of the firing rate evolution.}}
\label{fig:excitatory1}
\end{figure}

In fact, compared to the case when the transmission delay \(D\) is large, oscillatory solutions with small transmission delays \(D\) are more interesting {(see figures in the bottom row of Figure \ref{fig:excitatory1} for example)}. That is because that oscillation when \(D\) is large is caused by long time delay of the neuron, rather than an intrinsic  property of the NNLIF model. On the other hand, when  \(D\) and $\gamma$ are small, the modified model can be viewed as an regularization of the original Fokker-Planck equation, and from this perspective, the numerical simulations in such scenarios may serve as useful evidences of exploring the solution structures.

\section{Conclusion}

In this work, we have studied a structure preserving numerical scheme for the Fokker-Planck equations derived from the Nonlinear Noisy Leaky Integrate-and-Fire model of neuron networks. The scheme is mass conserving, {conditionally positivity preserving} and satisfies the discrete relative entropy estimate, which is of great significance of ensuring stable and reliable numerical simulations. Besides careful convergence test, we have carried out various numerical examples, exploring different solution behaviors. In particular, the robust numerical performances with the modified model including the transmission delay and the refractory state manifest that the proposed scheme is an ideal simulator for realistic and complex neuronal network systems. In the future, we may further investigate high order extensions of the scheme, especially higher-order scheme in time, and utilize the specifically designed numerical experiments to gain insight on unknown solution properties of such Fokker-Planck equation.

\section*{Acknowledgment}
J. Hu is partially supported by NSF grant DMS-1620250 and NSF CAREER grant DMS-1654152. J. Liu is partially supported by KI-Net NSF RNMS grant No. 11-07444 and NSF grant DMS 1812573. Z. Zhou is supported by NSFC grant No. 11801016. Z. Zhou thanks Beno\^{i}t Perthame for helpful discussions.


\end{document}